\newcommand{\R}{{\mathbf R}}
\newcommand{\Z}{{\mathbf Z}}
\newcommand{\N}{{\mathbf N}}
\newcommand    {\e}{{\mathbf x}}
\newcommand    {\by}{{\mathbf y}}
\newcommand    {\bt}{{\mathbf t}}
\newcommand    {\bs}{{\mathbf s}}
\newcommand    {\bu}{{\mathbf u}}
\newcommand    {\be}{{\mathbf e}}
\newcommand    {\C}{{\mathbf C}}
\newcommand    {\bz}{{\mathbf z}}
\newcommand    {\fb}{{\mathbf f}}
\newcommand    {\m}{{\rm sign}}
\newtheorem{theorem}{Theorem}[section]
\newtheorem{proposition}[theorem]{Proposition}
\newtheorem{lemma}[theorem]{Lemma}
\newtheorem{corollary}[theorem]{Corollary}
\theoremstyle{definition}
\newtheorem{definition}[theorem]{Definition}
\theoremstyle{remark}
\newtheorem{remark}[theorem]{Remark}
\numberwithin{equation}{section}
\newtheorem{example}{Example}
\newtheorem{problem}{\bf Problem}[section]
\begin{document}

\title[Discreteness of the spectrum for Schr\"odinger operator]{
	Conditions for discreteness of the spectrum to
	 multi-dimensional Schr\"odinger operator}

\author[L. Zelenko]
{ Leonid Zelenko} 

\address{%
Department of Mathematics \\
University of Haifa  \\
Haifa 31905  \\
Israel}
\email{zelenko@math.haifa.ac.il}

\begin{abstract}
This work is a continuation of our previos paper \cite{Zel1}, where
for the the Schr\"odinger operator $H=-\Delta+ V(\e)\cdot$ $(V(\e)\ge 0)$,  acting in the space $L_2(\R^d)\,(d\ge 3)$, some constructive sufficient conditions for discreteness of its spectrum have been
obtained on the base of  well known Mazya -Shubin criterion and an optimization problem for a set function. Using a {\it capacitary strong type inequality} of David Adams, the concept of {\it base polyhedron} for the harmonic capacity and some properties of Choquet integral by this capacity, we obtain more general sufficient conditions for discreteness of the spectrum of $H$ in terms of a repeated nonincreasing rearrangement of the function $Y(\e,\bt)=\sqrt{V(\e)}\frac{1}{|\e-\bt|^{d-2}}\sqrt{V(\bt)}$ on cubes that are going to infinity.    	
\end{abstract}

\subjclass{Primary 47F05, 47B25, 47D08, \\35P05; Secondary 81Q10, 90C10, 90C27, 91A12} 	

\keywords{Schr\"odinger operator,
	discreteness of the spectrum,  base polyhedron of a submodular set function, Choquet integral,  capacitary inequalities, rearrangement of a function.} 

\maketitle

\tableofcontents

\section{Introduction} \label{sec:introduction}
\setcounter{equation}{0}

This work is a continuation of our previos paper \cite{Zel1}.
 We consider the Schr\"odinger operator $H=-\Delta+ V(\e)\cdot$,
acting in the space $L_2(\R^d)$. In what follows we
assume that $d\ge 3$, $V(\e)\ge 0$ and $V(\cdot)\in L_{1,loc}(\R^d)$. Physically $V(\e)$ is the 
potential of an external electric field. In \cite{Zel1} some constructive sufficient conditions for discreteness of the spectrum of $H$ have been
obtained on the base of  well known Mazya -Shubin criterion (\cite{M-Sh}) and an optimization problem for a set function. Since also in the present paper we shall use  the Mazya -Shubin result, let us formulate it. 
Following to \cite{M-Sh}, 
consider in $\R^d$ an open domain $\mathcal{G}$ satisfying the conditions:

(a) $\mathcal{G}$ is bounded and star-shaped with respect to any point of an open ball $B_\rho(0)\,(\rho>0)$ contained in $\mathcal{G}$;

(b) $\mathrm{diam}(\mathcal{G})=2$.

As it was noticed in \cite{M-Sh}, condition (a) implies that $\mathcal{G}$ can be represented in the form
\begin{equation}\label{formofcalG}
\mathcal{G}=\{\e\in\R^d:\,\e=r\omega,\, |\omega|=1,\,0\le r<r(\omega)\}, 
\end{equation} 
where $r(\omega)$ is a positive Lipschitz function on the standard unit sphere $S^{d-1}\subset\R^d$. For $r>0$
and $\by\in\R^d$ denote 
\begin{equation}\label{dfcalGry}
\mathcal{G}_r(\by):=\{\e\in\R^d:\,r^{-1}\e\in\mathcal{G}\}+\by.
\end{equation}
Denote by
$\mathcal{N}_{\gamma}(\by,r)\;(\gamma\in(0,1))$ the set of all
compact sets $F\subseteq\bar{\mathcal{G}}_r(\by)$ satisfying the condition
\begin{equation}\label{defNcap}
\mathrm{cap}(F)\le\gamma\,\mathrm{cap}(\bar{\mathcal G}_r(\by)),
\end{equation}
where $\mathrm{cap}(F)$ is the harmonic capacity.
\begin{theorem}\label{thMazSh}[\cite{M-Sh}, Theorem 2,2]
	The spectrum of the operator $H$ is discrete, if  
	for some $r_0>0$ and for any $r\in(0,r_0)$ the condition
	\begin{equation}\label{cndmolch1}
	\lim_{\by\rightarrow\infty}\inf_{F\in\mathcal{N}_{\gamma(r)}(\by,r)}\int_{{\mathcal G}_r(\by)\setminus
		F}V(\e)\, \mathrm{d}\e=\infty,
	\end{equation}
	is satisfied, where 
	\begin{equation}\label{cndgammar}
	\forall\,r\in(0,\,r_0):\;\gamma(r)\in(0,1)\quad\mathrm{and}\quad
	\limsup_{r\downarrow 0} r^{-2}\gamma(r)=\infty,
	\end{equation}	
\end{theorem}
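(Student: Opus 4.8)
The plan is to derive discreteness of the spectrum of $H$ from compactness of the embedding of its form domain $\mathcal{Q}=\{u\in H^1(\R^d):\int_{\R^d}V|u|^2\,\mathrm{d}\e<\infty\}$, equipped with the norm $\|u\|_\mathcal{Q}^2=\int_{\R^d}(|\nabla u|^2+V|u|^2)\,\mathrm{d}\e+\|u\|^2_{L_2}$, into $L_2(\R^d)$. Since the inclusion $H^1(\Omega)\hookrightarrow L_2(\Omega)$ is compact for every bounded $\Omega$, this compactness is equivalent to a decay-at-infinity estimate, so it suffices to prove that for every $\varepsilon>0$ there is $R>0$ with
\[
\int_{|\e|>R}|u(\e)|^2\,\mathrm{d}\e\le\varepsilon\int_{\R^d}\big(|\nabla u|^2+V|u|^2\big)\,\mathrm{d}\e\qquad(u\in C_0^\infty(\R^d)),
\]
the extension to all of $\mathcal{Q}$ following by density. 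This estimate will be assembled from local estimates on the dilated copies $\Gc_r(\by)$, summed over a covering of $\{|\e|>R\}$ by such copies of bounded overlap multiplicity $N_0=N_0(\Gc,d)$; conditions (a)--(b) guarantee that such a covering exists and that $\mathrm{cap}(\bar{\Gc}_r(\by))=r^{d-2}\,\mathrm{cap}(\bar{\Gc})$ with a fixed constant $\mathrm{cap}(\bar{\Gc})>0$.

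Fix $\varepsilon>0$. By \eqref{cndgammar} choose $r\in(0,r_0)$ with $r^{-2}\gamma(r)>M$, where $M$ is a large constant to be fixed below; with this $r$ frozen, \eqref{cndmolch1} yields a function $\Phi(\by)\to\infty$ as $\by\to\infty$ such that $\int_{\Gc_r(\by)\setminus F}V\,\mathrm{d}\e\ge\Phi(\by)$ for every $F\in\Nc_{\gamma(r)}(\by,r)$. On a fixed copy $\Gc_r(\by)$ set $\lambda^2:=\big(2|\Gc_r(\by)|\big)^{-1}\int_{\Gc_r(\by)}|u|^2\,\mathrm{d}\e$ and $F:=\{\e\in\bar{\Gc}_r(\by):|u(\e)|\le\lambda\}$, which is a compact subset of $\bar{\Gc}_r(\by)$. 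Now consider two cases. If $\mathrm{cap}(F)>\gamma(r)\,\mathrm{cap}(\bar{\Gc}_r(\by))$, apply Mazya's capacitary Poincar\'e-type inequality to $v:=(|u|-\lambda)_+$, which vanishes on $F$:
\[
\int_{\Gc_r(\by)}v^2\,\mathrm{d}\e\le C\,r^d\,\mathrm{cap}(F)^{-1}\int_{\Gc_r(\by)}|\nabla v|^2\,\mathrm{d}\e\le C\,r^d\,\mathrm{cap}(F)^{-1}\int_{\Gc_r(\by)}|\nabla u|^2\,\mathrm{d}\e ;
\]
using $|u|^2\le 2v^2+2\lambda^2$, the case hypothesis, and the dilation law for $\mathrm{cap}(\bar{\Gc}_r(\by))$, one absorbs the $\lambda^2$-term and gets $\int_{\Gc_r(\by)}|u|^2\le C'M^{-1}\int_{\Gc_r(\by)}|\nabla u|^2$. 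If instead $\mathrm{cap}(F)\le\gamma(r)\,\mathrm{cap}(\bar{\Gc}_r(\by))$, then $F\in\Nc_{\gamma(r)}(\by,r)$ and, since $|u|>\lambda$ on $\Gc_r(\by)\setminus F$,
\[
\int_{\Gc_r(\by)}V|u|^2\,\mathrm{d}\e\ge\lambda^2\int_{\Gc_r(\by)\setminus F}V\,\mathrm{d}\e\ge\lambda^2\,\Phi(\by)=\frac{\Phi(\by)}{2|\Gc_r(\by)|}\int_{\Gc_r(\by)}|u|^2\,\mathrm{d}\e ,
\]
whence $\int_{\Gc_r(\by)}|u|^2\le 2|\Gc|\,r^d\,\Phi(\by)^{-1}\int_{\Gc_r(\by)}V|u|^2$. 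In either case $\int_{\Gc_r(\by)}|u|^2\le\delta(\by)\int_{\Gc_r(\by)}(|\nabla u|^2+V|u|^2)$ with $\delta(\by)=\max\{C'M^{-1},\,2|\Gc|r^d\Phi(\by)^{-1}\}$.

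To conclude, cover $\{|\e|>R\}$ by copies $\Gc_r(\by_j)$ of overlap multiplicity $\le N_0$, sum the local estimate over $j$, and use the overlap bound to obtain $\int_{|\e|>R}|u|^2\le N_0\big(\sup_j\delta(\by_j)\big)\int_{\R^d}(|\nabla u|^2+V|u|^2)$. Choose $M$ (hence $r$) so that $N_0C'M^{-1}<\varepsilon/2$, and then $R$ so large that $\Phi(\by_j)>4N_0|\Gc|r^d/\varepsilon$ for every $\by_j$ occurring in the covering of $\{|\e|>R\}$; this gives the required decay estimate, hence the compactness of $\mathcal{Q}\hookrightarrow L_2(\R^d)$ and the discreteness of the spectrum of $H$.

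I expect the main obstacle to be the local inequality invoked in the first case, namely a capacitary Poincar\'e-type bound $\int_{\Gc_r(\by)}v^2\le C\,r^d\,\mathrm{cap}(F)^{-1}\int_{\Gc_r(\by)}|\nabla v|^2$ for $v$ vanishing on a compact set $F\subseteq\bar{\Gc}_r(\by)$, with $C$ independent of $\by$ and correctly homogeneous in $r$. This is exactly the place where conditions (a)--(b) are genuinely needed: the uniform interior geometry of $\Gc$, the comparability of the harmonic capacity of $F\subseteq\bar{\Gc}_r(\by)$ with its condenser capacity relative to a fixed dilate of $\Gc_r(\by)$, and the scaling $\mathrm{cap}(\bar{\Gc}_r(\by))=r^{d-2}\,\mathrm{cap}(\bar{\Gc})$. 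It is in essence Mazya's capacitary inequality, and the remaining work is careful tracking of constants so that the dichotomy closes. A secondary point is the calibration of the truncation level $\lambda$: it must be large enough that in the second case $\int_{\Gc_r(\by)}|u|^2$ is controlled by $\int_{\Gc_r(\by)}V|u|^2$, yet small enough that $\lambda^2|\Gc_r(\by)|$ is absorbable in the first; the normalization $\lambda^2=(2|\Gc_r(\by)|)^{-1}\int_{\Gc_r(\by)}|u|^2$ does both.
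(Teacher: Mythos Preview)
The paper does not prove this theorem; it is quoted from Mazya--Shubin \cite{M-Sh} as the foundational input on which the paper's own results rest, so there is no ``paper's own proof'' to compare against. Your sketch is, in outline, the Mazya--Shubin argument itself: reduce discreteness to a decay-at-infinity estimate via compactness of the form-domain embedding, localize to copies of $\Gc_r(\by)$ by a bounded-overlap covering, and on each copy run a dichotomy on $\mathrm{cap}(\{|u|\le\lambda\})$, invoking Mazya's capacitary Poincar\'e inequality in one branch and the hypothesis \eqref{cndmolch1} in the other. The strategy is sound, and your identification of the capacitary Poincar\'e inequality (with constants uniform in $\by$ and with the scaling $r^d/\mathrm{cap}(F)$) as the genuine technical input---and the place where the star-shapedness hypothesis (a) is used---is correct.

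There is, however, a concrete slip in the calibration of $\lambda$. With your choice $\lambda^2=(2|\Gc_r(\by)|)^{-1}\int_{\Gc_r(\by)}|u|^2$, the pointwise bound $|u|^2\le 2v^2+2\lambda^2$ integrates to
\[
\int_{\Gc_r(\by)}|u|^2\;\le\;2\int_{\Gc_r(\by)} v^2+2\lambda^2|\Gc_r(\by)|\;=\;2\int_{\Gc_r(\by)} v^2+\int_{\Gc_r(\by)}|u|^2,
\]
which is the tautology $0\le 2\int v^2$ and absorbs nothing; the first branch of the dichotomy therefore does not close as written. The fix is immediate: take the constant strictly below $1/2$, e.g.\ $\lambda^2=(4|\Gc_r(\by)|)^{-1}\int_{\Gc_r(\by)}|u|^2$, so that the $\lambda^2$-term contributes only $\tfrac12\int_{\Gc_r(\by)}|u|^2$ and one obtains $\int_{\Gc_r(\by)}|u|^2\le 4\int_{\Gc_r(\by)} v^2\le C''M^{-1}\int_{\Gc_r(\by)}|\nabla u|^2$. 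The second branch is unaffected (only the harmless numerical factor in front of $\Phi(\by)^{-1}$ changes), and with this corrected normalization your argument goes through.
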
	
In \cite{M-Sh} also a necessary condition for discreteness of the spectrum was obtained, which is close to sufficient one.

As we have noticed in \cite{Zel1}, condition \eqref{cndmolch1} of Theorem \ref{thMazSh} is hardly verifiable, because in order to test it, one needs to solve a difficult optimization problem, whose cost functional is the set function  $\mathcal{I}(F)=\int_{{\mathcal G}_r(\by)\setminus F}V(\e)\, \mathrm{d}\e$ and the constrain 
$F\in\mathcal{N}_{\gamma(r)}(\by,r)$ is submodular (because``cap'' is a submodular set function (definition \eqref{submod})). In the papers \cite{Ben-Fort}, \cite{Si1}, \cite{L-S-W} and \cite{GMD} some constructive sufficient conditions for discreteness of the spectrum for $H$ have been found without use of the Mazya -Shubin result. In \cite{Zel1} we have estimated the cost functional $\mathcal{I}(F)$ in \eqref{cndmolch1}  from below using the isocapacity inequality and replacing $F\in\mathcal{N}_{\gamma(r)}(\by,r)$ by a weaker but additive constrain. To this end we also used the concept of {\it base polyhedron} for the harmonic capacity (definition \eqref{defcore}). By this way on the base of  Theorem \ref{thMazSh} we have obtained in \cite{Zel1} some constructive sufficient conditions for discreteness of the spectrum in terms of measures, which permit a reformulation in terms of non-increasing rearrangements of some functions connected with the potential $V(\e)$. As we have shown, these conditions are more general than ones obtained in the papers mentioned above.

In the present paper we have obtained more general than in \cite{Zel1} constructive sufficient conditions
for discreteness of the spectrum of $H$, using along with the arguments mentioned above also a
{\it capacitary strong type inequality} \eqref{strtypineq} of David Adams \cite{AH}
 and some properties of Choquet integral by this capacity. 
 
 Let us notice that in \cite{T} Michael Taylor have found an alternative for the Mazya-Shubin result. His necessary and sufficient conditions for discreteness of the spectrum of $H$ are formulated in terms of the {\it scattering length } of the potential $V(\e)$ on boxes that are going to infinity. It would be interesting to extract from this result an easier verifiable sufficient condition which would be more general than results obtained in the present paper. 
 
 Let us describe briefly the main results of this paper. 
 
 Theorem \ref{thcondlebesgue} yields a sufficient condition for discreteness of the spectrum of $H$ in terms of the non-increasing rearrangement of $V(\e)$  with respect to Lebesgue measure on cubes that are  going to infinity. Its proof is based immediately on Corollary 3.12 of our previous work \cite{Zel1}.
 
 In Proposition \ref{threlprlogdenstlebes} we compare Theorem \ref{thcondlebesgue} and Theorem 3.14 of the previous work.
 
 A central role in our considerations plays Theorem \ref{thusecore}, were on the base of Mazya-Shubin result and  inequality \eqref{strtypineq}, mentioned above, we obtain a sufficient condition for discreteness of the spectrum of $H$ in terms of measures from the base polyhedron $BP(\mathcal{G}_r(\by))$of harmonic capacity and the composition of Bessel kernel $G_1(\e)$ with the function $\sqrt{V(\e)}$ on subsets of $\mathcal{G}_r(\by)$ whose complements are small with respect to these measures.
 
 Theorem \ref{thdoublrear}, based on Theorem \ref{thusecore}, yields a sufficient condition for discreteness of the spectrum of $H$ in terms of the repeated nonincreasing rearrangement (Definition \ref{dfreprearr}) of the function 
 $X_\mu(\e,\bs)$ (defined by \eqref{dfWbsbt}) with respect to measures $\mu$ from the set $BP_{eq}(\mathcal{G}_r(\by)\subseteq BP(\mathcal{G}_r(\by)$, consisting of measures which are equivalent to the Lebesgue measure.
 
 Corollary \ref{crdoublrear} is the immediate consequence of Theorem \ref{thdoublrear} for the case where the domains $\mathcal{G}_r(\by)$ are balls $B_r(\by)$. In the formulation of it the set $BP_{eq}(B_r(\by))$ is replaced by its part $M_f(\by,r)$ (Definition \ref{dfMacry}).
 
 Theorem \ref{thdoubrearrleb}, based on Corollary \ref{crdoublrear}. yields a sufficient condition for discreteness of the spectrum of $H$ in terms of repeated nonincreasing rearrangement with respect to Lebesgue measure of the function
 \begin{equation}\label{dfYmust}
 Y(\e,\bt)=\sqrt{V(\e)}\frac{1}{|\e-\bt|^{d-2}}\sqrt{V(\bt)}
 \end{equation} 
on cubes that are going to infinity.
 
 In Proposition \ref{prrellebesdoublebes} we compare Theorem \ref{thdoubrearrleb} and  Theorem \ref{thcondlebesgue}.
 
Theorem \ref{thlogmdensdoubrearr} is based on Theorem \ref{thdoubrearrleb}. It yields an easier verifiable  condition for discreteness of the spectrum of $H$  by use of $m$-adic partition of a unit cube and of our concept of  $(\log_m,\,\theta)$- dense system of subsets of this cube (Definition \ref{dfdenslogmtetpart1}).

The paper is organized as follows. After this Introduction, in Section \ref{sec:prel} (Preliminaries)  we introduce some concepts and notations used in the paper. In Section \ref{sec:prevwork} we formulate some results from the previous work \cite{Zel1}, used in this paper. In Section \ref{sec:mainres} we formulate the main results of the paper and in Section \ref{sec:proofmainres} we prove them. In Section \ref{sec:examples} we recall briefly some examples, constructed in \cite{Zel1}, and construct a counterexample (Example \ref{exdoubrearlebes}) which shows that Theorem \ref{thdoubrearrleb} is essentially more general than Theorem \ref{thcondlebesgue}.  Sections \ref{sec:A}, \ref{sec:B} and \ref{sec:C} are Appendices. In Section \ref{sec:A} we prove some claims concerning the existence of base polyhedron for harmonic capacity and its connection with Choquet integral by this capacity. In Section \ref{sec:B} we obtain upper and lower estimates for the composition of Bessel kernel with itself on a ball $B_r(\by)$. There we use the well known arguments usually applied to estimation of composition for singular radial kernels. But main difficulty was to obtain lower estimate \eqref{estXfrombelow} with a constant not depending on the radius $r$ of the ball $B_r(\by)$ while $r\in (0,r_0)$.  In Section \ref{sec:C} we prove existence of the repeated nonincreasind rearrangement for an integrable function $F(\e,\bs)$ defined on the product of two measure spaces. There we use some facts from the theory of Riesz spaces \cite{Lux-Za}.

\section{Preliminaries} \label{sec:prel}  
\setcounter{equation}{0}

Let us come to agreement on some notations and terminology. Let $\Omega$ be an open and bounded domain in $R^d$. We denote by
$\Sigma_B(\bar\Omega)$ the $\sigma$-algebra of all
Borel subsets of $\bar\Omega$. By $\Sigma_L(\bar\Omega)$ we denote the $\sigma$-algebra of all Lebesgue measurable subsets of $\bar\Omega$, i.e., it is the Lebesgue completion of $\Sigma_B(\bar\Omega)$ by the Lebesgue measure $\mathrm{mes}_d$. If $(X,\Sigma,\mu)$ is a measure space, we call all sets from $\Sigma$ $\,\mu$-{\it measurable} and if $X=\bar\Omega\subseteq\R^d$, $\mu=\mathrm{mes}_d$ and $\Sigma=\Sigma_L(\bar\Omega)$, we simply call them  measurable. If a measure is absolutely continuous with respect to Lebesgue measure, we simply call it absolutely continuous. By $B_r(\by)$ we denote the open ball in $\R^d$ whose radius and center are $r>0$ and $\by$. 

Let $\Sigma$ be a non-empty algebra of subsets of a set $X$, $B(\Sigma,X)$ be the set of bounded, real valued, $\Sigma$-measurable functions on $X$, and $v$ be a monotonic real valued function on $\Sigma$ with $v(\emptyset)=0$. Monotonicity means that for any $E$ and $F$ in $\Sigma$ $E\subseteq F$ implies $v(E)\le v(F)$. In \cite{Ch} Choquet defined the following integration operation with respect to the nonnecessarily additive set function $v$: for a nonnegative function $F\in B(\Sigma,X)$
\begin{equation}\label{dfgenChoqint}
\int_X F(x)\,v(dx):=\int_0^\infty v\big(\{x\in X:\, F(x)\ge t\}\big)\,dt.
\end{equation}

Let us recall the definition of the {\it harmonic (or Newtonian) capacity}\footnote[1]{In the Russian literature it is often called  \it{Wiener capacity.}}  of a compact set
$E\subset\R^d$ (\cite{M-Sh}):
\begin{eqnarray}\label{dfWincap}
&&\mathrm{cap}(E):=\inf\Big(\big\{\int_{\R^d}|\nabla u(\e)|^2\,
\mathrm{d}\e\,:\;u\in C^\infty(\R^d), \;u\ge
1\;\mathrm{on}\;E,\nonumber\\
&&u(\e)\rightarrow 0\;\mathrm{as}\;|\e|\rightarrow\infty \big\}\Big).
\end{eqnarray}
It is known \cite{Ch} that the set function ``cap'' can be extended in a suitable manner from the set of all compact subsets of the space $\R^d$ to the set of all Borel subsets of it.
It is known  (\cite{Maz}, \cite{Maz1}) that the set function ``cap''
is monotonic and {\it submodular} (concave) in the sense that for any pair of sets $A,\,B\in\Sigma_B(\bar\Omega)$
\begin{equation}\label{submod}
\mathrm{cap}(A\cup B)+\mathrm{cap}(A\cap
B)\le\mathrm{cap}(A)+\mathrm{cap}(B).
\end{equation}

In \cite{AH} a more general concept of capacity has been considered (\cite{AH}, p.25, Definitions 2.3.1, 2.3.3).  Let $g(\e)$ be a {\it radially decreasing
	convolution kernel}  (\cite{AH}, p. 38). Then the capacity $C_{g,p}$,corresponding to $g$ and $p>1$, is defined in the following manner:
\begin{equation}\label{dfCgp}
C_{g,p}(E)=\inf\Big\{\int_{\R^d}|h(\e)|^p\,
\mathrm{d}\e\,:\,h\in\Omega_E\Big\}\quad (E\subset \R^d),
\end{equation}
where $E\in\Sigma_B(\R^d)$ and
\begin{equation}\label{dfOmE}
\Omega_E=\{h\in L_p(\R^d)\,:\,(g\star h)(\e)\ge
1\;\;\mathrm{for}\;\mathrm{all}\;\;\e\in E\}
\end{equation}

In \cite{AH} (p.189) the following {\it capacitary strong type
	inequality} has been established:
\begin{equation}\label{strtypineq}
\int_{\R^d}\big((g\star f)(\e)\big)^p\,C_{g,p}(\mathrm{d}\,\e)\le
A\int_{\R^d}\big(f(\e)\big)^p\, \mathrm{d}\e,
\end{equation}
where $f$ is a non-negative function belonging to
$L_p(\R^d)\;(1<p<\infty))$ and the constant $A>0$ does not
depend on $f$.  The  integral in left hand side of \eqref{strtypineq} is the Choquet
integral \eqref{dfgenChoqint} by the non-additive set function $v=C_{g,p}$ (here $X=\R^d$ and $F(\e)=\big((g\star f)(\e)\big)^p$).

By $M(\bar\Omega)$ denote the set of all  additive set functions on
$\Sigma_B(\bar\Omega))$ (we shall call them briefly ``measures'') and by $M^+(\bar\Omega)$ denote the set of
all non-negative measures from $M(\bar\Omega))$. In the theory of
coalition games (\cite{Shap} \cite{Schm}, \cite{Mar-Mon}) the concept of the
{\it core} of a game is used. 
Following to
\cite{Fuj}, we define for the harmonic capacity on $\bar\Omega$ a dual concept of the {\it base
	polyhedron} $\mathrm{BP}(\bar\Omega)$: 
\begin{eqnarray}\label{defcore}
&&\mathrm{BP}(\bar\Omega):=\{\mu\in M^+(\bar\Omega)):\;\\
&&\mu(A)\le \mathrm{cap}(A)\;\mathrm{for\; all}\;
A\in\Sigma_B(\bar\Omega))\;\mathrm{and}\;\mu(\bar\Omega)=\mathrm{cap}(\bar\Omega)\}.\nonumber
\end{eqnarray}
This set is nonempty, convex, and compact in the weak*-topology
(Proposition \ref{prcore}). If $\Omega=\mathcal{G}_r(\by)$ (see \eqref{dfcalGry}. \eqref{formofcalG}), we shall
write briefly $M^+(\by,r)$ and $\mathrm{BP}(\by,r)$. Denote by
$\mathrm{BP}_{eq}(\bar\Omega)$ the subset of 
$\mathrm{BP}(\bar\Omega)$ 
consisting of Radon measures which are equivalent to
Lebesgue measure $\mathrm{mes}_d$ (\cite{Hal}).

Let us recall the concept
of {\it measure preserving mapping} (\cite{Car-Dan}, Definition
2). 
\begin{definition}\label{dfmeaspres}
	Let $(\Omega,\,\mathcal{A},\,Q)$ be a  probability space
	and $\lambda$ be Lebesgue measure on $[0,1]$. A measurable
	function $s:\,\Omega\rightarrow [0,1]$ is called {\it measure
		preserving} , if $\lambda(B)=Q\big(s^{-1}(B)\big)$ for any Borel
	subset of $[0,1]$. Denote by $\mathcal{S}(\Omega,\,Q)$ the
	collection of all such functions.	
\end{definition}

Suppose that $\Omega=\bar
B_r(\by)$, $\mathcal{A}=\Sigma_L(\bar B_r(\by))$ 
and $Q$ is the normalized
Lebesgue measure $m_{d,r}$ on $\bar B_r(\by)$, defined by:
\begin{equation}\label{dfmdr}
m_{d,r}(A):=\frac{\mathrm{mes}_d(A)}{\mathrm{mes}_d(B_r(\by))}\quad(A\in\Sigma_L(\bar B_r(\by))).
\end{equation}
In \cite{Zel1} we have used the following set of measures, which is a part of $\mathrm{BP}_{eq}(\bar
B_r(\by))$ (Proposition \ref{lmdescribedens}):
\begin{definition}\label{dfMacry}
	Consider the function $f(t)=t^{(d-2)/d}\;(t\in[0,1])$ and denote by ${\mathrm M}_f(\by,r)$ the set of absolute
	continuous measures on $\bar B_r(\by)$, whose densities run over
	the following convex set:
	\begin{equation}\label{dfcalCo}
	{\mathcal Co}\,(\by,\,r):=\mathrm{cap}(\bar
	B_r(0))\cdot\,\overline{\mathrm{co}}\Big(\{f^\prime\,\circ\,s:\,s\in\mathcal{S}(\bar
	B_r(\by),\,m_{d,r})\}\Big),
	\end{equation}
	where  $``\mathrm{co}''$ denotes the convex
	hull and the closure is taken for the $L_1(\bar B_r(\by),\\m_{d,r})$
	topology.
\end{definition}
\begin{remark}\label{remoncubes}
	We shall consider the unit cube $Q=[-1,\,1]^d\subset\R^d$ and the dilation of it $Q_r:=r\cdot Q\,(r>0)$ and the translation of the latter 
	$Q_r(\by):=Q_r+\by$.	
\end{remark}
Denote by $[x]$ the integer part of a real number $x$. In \cite{Zel1} we have used the following concepts:
\begin{definition}\label{dfregpar1}
	We call a subset of $\R^d$ a {\it regular parallelepiped}, if it has the form $\times_{k=1}^d[a_k,\,b_k]$.
\end{definition}
\begin{definition}\label{dfdenslogmtetpart1}
	Suppose that $m>1$	and $\theta\in(0,1)$. A sequence $\{D_n\}_{n=1}^\infty$ of subsets of a cube $Q_1(\by)$ is said to be a {\it $(\log_m,\,\theta)$- dense system} in $Q_1(\by)$, if 
	
	(a) each $D_n$ is a finite union of regular parallelepipeds;
	
	(b) for any 
	cube $Q_r(\bz)\subseteq Q_1(\by)$ with $r\in\big(0,\,\min\{1,\,\frac{1}{\theta m^2}\}\big)$ there is  
	$j\in\{1,2,\dots, \big[\log_m\big(\frac{1}{\theta r}\big)\big]\}$
	such that for some regular parallelepiped $\Pi\subseteq D_j$ 
	there is a cube $Q_{\theta r}(\bs)$, contained in $\Pi\cap Q_r(\bz)$.
\end{definition}

\section{Some results from the previous work} \label{sec:prevwork}
\setcounter{equation}{0}

All the results of our previous paper \cite{Zel1} and of the present one are based on the following optimization problem for set functions:

\begin{problem}\label{extrprobl}
	Let $(X,\,\Sigma,\,\mu)$ be a measure space with a non-negative measure
	$\mu$  and $W(x)$ be a
	non-negative function defined on $X$ and belonging to
	$L_1(X,\,\mu)$. For $t\in(0,\,\mu(X))$ consider the
	collection $\mathcal{E}(t,X,\,\mu)$ of all $\mu$-measurable
	sets $E\subseteq X$ such that $\mu(E)\ge t$. The goal is to find
	the quantity
	\begin{equation}\label{dfIVOm}
	I_W(t,\,X,\,\mu)=\inf_{E\in\mathcal{E}(t,X,\,\mu)}\int_E W(x)\,
	\mu(\mathrm{d}x).
	\end{equation}
\end{problem}

In the formulation of next claim, proved in \cite{Zel1}, we have  used the following notations.
For the measure space and the function $W(x)$, introduced in Problem \ref{extrprobl},
consider the quantity:
\begin{equation}\label{dfJVOm}
J_W(t,\,X,\mu):=\int_{\mathcal{K}_W^-\big(t,\,X,\,\mu\big)}W(x)\,
\mu(\mathrm{d}x)+\big(t-\kappa_W^-(t,\,X,\,\mu)\big) W_\star(t,\,X,\,\mu),
\end{equation}
where $W_\star(t,\,X,\,\mu)$ is the non-decreasing rearrangement of the function $W(x)$, i.e,,
\begin{equation}\label{dfstOm}
W_\star(t,\,X,\,\mu):=\sup\{s>0:\;\lambda_\star(s,\,W,\,X,\,\mu)<
t\}\quad (t>0)
\end{equation}
with
\begin{equation}\label{dfFVOm}
\lambda_\star(s,\,W,\,X,\,\mu)=\mu({\mathcal L} _\star(s,W,X)),
\end{equation}
\begin{equation}\label{dfKVOm}
{\mathcal L} _\star(s,W,X)=\{x\in X:\;W(x)\le s\}.
\end{equation}
Furthermore,
\begin{equation}\label{dfKWt}
\mathcal{K}_W^-\big(t,\,X,\,\mu\big)={\mathcal L} _\star(s^-,W,X)\vert_{s=W_\star(t,\,X,\,\mu)},
\end{equation}
and
\begin{equation}\label{dfFVommin}
\kappa_W^-(t,\,X,\,\mu)=\mu\big(\mathcal{K}_W(t,\,X,\,\mu)\big),
\end{equation}
where
\begin{equation}\label{dfKVsminOm}
{\mathcal L} _\star(s^-,W,X)=\bigcup_{u<s}{\mathcal L} _\star(u,W,X)=\{x\in
X:\;W(x)< s\},
\end{equation}

The following claim from \cite{Zel1} solves Problem \ref{extrprobl} for a non-atomic  measure:

\begin{proposition}\label{prsolextrprob}[\cite{Zel1}, Theorem 3.3]
	Suppose that, in addition to  conditions of Problem \ref{extrprobl}, the measure 
	$\mu$ is non-atomic. Then
	
	(i) for any $t\in(0,\,\mu(X))$ there exists a $\mu$-measurable set $\tilde{\mathcal K}\subseteq X$ such that 
	\begin{equation}\label{estmesKV}
	\mu(\tilde{\mathcal K})=t,
	\end{equation}
	for the quantity $J_W(t,\,X,\,\mu)$, defined by
	\eqref{dfJVOm}-\eqref{dfFVommin}, the representation
	\begin{equation}\label{reprJV}
	J_W\big(t,\,X,\,\mu\big)= \int_{\tilde{\mathcal
			K}}W(x)\, \mu(\mathrm{d}x)
	\end{equation}
	is valid and
	\begin{eqnarray}\label{propKV}
	&&\forall\;x\in\tilde{\mathcal K}:\quad W(x)\le
	W_\star(t,\,X,\,\mu)\nonumber,\\
	&&\forall\;x\in X\setminus\tilde{\mathcal K}:\quad W(x)\ge W_\star(t,\,X,\,\mu);
	\end{eqnarray}
	
	(ii) the equality
	\begin{equation}\label{IeqJ}
	I_W(t,\,X,\,\mu)=J_W(t,\,X,\,\mu)
	\end{equation}
	is valid.
\end{proposition}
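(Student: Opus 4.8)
The plan is to realize the infimum in Problem \ref{extrprobl} by an explicit minimizer: one fills $X$ with the points at which $W$ takes its smallest values, and the non-atomicity of $\mu$ is exactly what makes it possible to carry out this filling up to measure precisely $t$. Write $s:=W_\star(t,X,\mu)$; throughout I use that $\mu$ is finite (here $\mu(X)<\infty$). First I would record the two ``sandwich'' bounds
\[
\mu\big(\{x\in X:\,W(x)<s\}\big)\le t\le \mu\big(\{x\in X:\,W(x)\le s\}\big),\qquad s<\infty .
\]
These follow directly from the definition \eqref{dfstOm}--\eqref{dfKVOm} of $W_\star$ as a generalized inverse of the non-decreasing function $s'\mapsto\lambda_\star(s',W,X,\mu)=\mu(\{W\le s'\})$: for $s'<s$ one has $\lambda_\star(s',W,X,\mu)<t$, so $s'\uparrow s$ gives the left bound; for $s'>s$ one has $\lambda_\star(s',W,X,\mu)\ge t$, so $s'\downarrow s$ together with continuity of $\mu$ from above gives the right bound, while $s<\infty$ because $W$ is real valued and $t<\mu(X)$. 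By \eqref{dfKWt}--\eqref{dfKVsminOm} one has $\mathcal{K}_W^-(t,X,\mu)=\{x\in X:\,W(x)<s\}$, so the left member of the sandwich is $\kappa_W^-(t,X,\mu)$; in particular $\kappa_W^-(t,X,\mu)\le t$ and
\[
\mu\big(\{x\in X:\,W(x)=s\}\big)=\mu\big(\{W\le s\}\big)-\mu\big(\{W<s\}\big)\ge t-\kappa_W^-(t,X,\mu)\ge 0 .
\]

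Using non-atomicity of $\mu$ (Sierpi\'nski's theorem: a non-atomic measure attains every value in $[0,\mu(B)]$ on measurable subsets of a set $B$), I would then pick a measurable set $A\subseteq\{x\in X:\,W(x)=s\}$ with $\mu(A)=t-\kappa_W^-(t,X,\mu)$ and set $\tilde{\mathcal K}:=\mathcal{K}_W^-(t,X,\mu)\cup A$. Additivity of $\mu$ gives $\mu(\tilde{\mathcal K})=\kappa_W^-(t,X,\mu)+\big(t-\kappa_W^-(t,X,\mu)\big)=t$, which is \eqref{estmesKV}; and since $W\equiv s$ on $A$, additivity of the integral gives
\[
\int_{\tilde{\mathcal K}}W(x)\,\mu(\mathrm{d}x)=\int_{\mathcal{K}_W^-(t,X,\mu)}W(x)\,\mu(\mathrm{d}x)+\big(t-\kappa_W^-(t,X,\mu)\big)\,W_\star(t,X,\mu),
\]
which in view of \eqref{dfJVOm} is \eqref{reprJV}. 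The estimate \eqref{propKV} is then immediate: on $\tilde{\mathcal K}$ one has $W<s$ or $W=s$, so $W\le W_\star(t,X,\mu)$, while on $X\setminus\tilde{\mathcal K}\subseteq\{W\ge s\}$ one has $W\ge W_\star(t,X,\mu)$. This establishes part (i).

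For part (ii), I would prove that $\tilde{\mathcal K}$ is optimal by an exchange argument. For any $E\in\mathcal{E}(t,X,\mu)$, write $\int_E W\,\mu(\mathrm{d}x)-\int_{\tilde{\mathcal K}}W\,\mu(\mathrm{d}x)=\int_{E\setminus\tilde{\mathcal K}}W\,\mu(\mathrm{d}x)-\int_{\tilde{\mathcal K}\setminus E}W\,\mu(\mathrm{d}x)$; since $W\ge s$ on $E\setminus\tilde{\mathcal K}\subseteq X\setminus\tilde{\mathcal K}$ and $W\le s$ on $\tilde{\mathcal K}\setminus E\subseteq\tilde{\mathcal K}$, this gives
\[
\int_E W\,\mu(\mathrm{d}x)-\int_{\tilde{\mathcal K}}W\,\mu(\mathrm{d}x)\ge s\big(\mu(E\setminus\tilde{\mathcal K})-\mu(\tilde{\mathcal K}\setminus E)\big)=s\big(\mu(E)-\mu(\tilde{\mathcal K})\big)=s\big(\mu(E)-t\big)\ge 0,
\]
because $s=W_\star(t,X,\mu)\ge 0$ (as $W\ge 0$) and $\mu(E)\ge t$. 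Hence $\int_E W\,\mu(\mathrm{d}x)\ge J_W(t,X,\mu)$ for every admissible $E$, so $I_W(t,X,\mu)\ge J_W(t,X,\mu)$, and the reverse inequality is clear since $\tilde{\mathcal K}$ is itself admissible and realizes $J_W(t,X,\mu)$; this is \eqref{IeqJ}. The one delicate point is the first step: one must check carefully that the sandwich bounds and the finiteness of $s$ genuinely follow from the definition of $W_\star$, since the applicability of non-atomicity to the level set $\{W=s\}$ hinges on the inequality $\mu(\{W=s\})\ge t-\kappa_W^-(t,X,\mu)$. Everything after the construction of $\tilde{\mathcal K}$ is then routine.
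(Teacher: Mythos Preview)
The present paper does not give its own proof of this proposition: it is quoted verbatim from the author's earlier work \cite{Zel1} (it appears in Section~\ref{sec:prevwork} as ``[\cite{Zel1}, Theorem~3.3]'' among results imported from that paper), so there is nothing here to compare your argument against. That said, your proof is correct and is precisely the standard ``bathtub'' argument one expects: the sandwich bounds $\mu(\{W<s\})\le t\le\mu(\{W\le s\})$ read off from the definition of $W_\star$, non-atomicity to carve out a piece $A\subseteq\{W=s\}$ of the right mass, and the exchange inequality for optimality in part~(ii) are exactly the ingredients, and you have assembled them cleanly.

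One minor edge case you should make explicit: if the set $\{s'>0:\lambda_\star(s',W,X,\mu)<t\}$ is empty (equivalently $\mu(\{W=0\})\ge t$, since $W\ge 0$), then $s=W_\star(t,X,\mu)=0$ by the usual convention, your clause ``for $s'<s$'' is vacuous, and $\mathcal{K}_W^-(t,X,\mu)=\{W<0\}=\emptyset$. The right sandwich bound $\mu(\{W\le 0\})\ge t$ still follows by letting $s'\downarrow 0$, so one takes $\tilde{\mathcal K}\subseteq\{W=0\}$ with $\mu(\tilde{\mathcal K})=t$ directly, and both $J_W$ and $\int_{\tilde{\mathcal K}}W\,\mu(\mathrm{d}x)$ vanish. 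Everything else in your write-up goes through unchanged.
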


In the next claim. proved in \cite{Zel1}, we have obtained a two-sided estimate for the solution $J_W(t,\,X,\,\mu)$ of Problem \ref{extrprobl}  via a non-increasing rearrangement of the function $W(x)$ on  $X$. 
This rearrangement is following:
\begin{equation}\label{dfSVyrdel}
 W^\star(t;\,X;\,\mu):=\sup\{s>0\,:\;\lambda^\star(s;\,\,W;\,X;\,\mu)\ge
t\}\quad (t>0)
\end{equation}
where
\begin{eqnarray}\label{dfLVsry}
&&\lambda^\star(s;\,W;\,X;\,\mu)=\mu(\mathcal{L}^\star(s;\,W;\,X;\,\mu)),\nonumber\\
&&\mathcal{L}^\star(s;\,W;\,X)=\{x\in X,:\; W(x)\ge s\}.
\end{eqnarray}

The promised claim is following:

\begin{proposition}\label{lmJR}[\cite{Zel1}, Proposition 3.4]
	Suppose that, in addition to conditions of Problem \ref{extrprobl} and Theorem
	\ref{prsolextrprob}, the measure $\mu$ is finite.
	Then for $\theta>1$	and $t\in(0,\,\mu(X))$ the estimates
	\begin{equation}\label{estJR1}
	J_W(\mu(X)-t/\theta,X,\mu))\ge\frac{(\theta-1)t}{\theta} W^\star(t;\,X;\,\mu),
	\end{equation}
	\begin{equation}\label{estJR2}
	J_W(\mu(X)-t,\,X,\mu)\le (\mu(X)-t) W^\star(t;\,X;\,\mu)
	\end{equation}
	are valid.
\end{proposition}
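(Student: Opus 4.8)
The plan is to exploit part (ii) of Proposition \ref{prsolextrprob}, namely the identity $I_W(t,X,\mu)=J_W(t,X,\mu)$, so that both estimates can be argued in terms of the infimum defining $I_W$ rather than the more opaque explicit formula for $J_W$. Throughout, write $M=\mu(X)$ and abbreviate $W^\star=W^\star(t;X;\mu)$.

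For the upper bound \eqref{estJR2}: here the relevant level is $M-t$, so the admissible sets are those $E$ with $\mu(E)\ge M-t$, equivalently $\mu(X\setminus E)\le t$. I would produce a single good test set and plug it into $I_W$. The natural candidate is $E_0=\{x\in X:\,W(x)<W^\star\}$, or a set squeezed between $\{W<W^\star\}$ and $\{W\le W^\star\}$. By definition \eqref{dfSVyrdel} of the non-increasing rearrangement, $\lambda^\star(s;W;X;\mu)=\mu(\{W\ge s\})< t$ for every $s>W^\star$, and hence $\mu(\{W\ge W^\star\})\le t$ by right-continuity/monotone limits; since $\mu$ is non-atomic I can trim a subset to get a measurable $E$ with $X\setminus E\subseteq\{W\ge W^\star\}$ and $\mu(X\setminus E)=t$ exactly, so $\mu(E)=M-t$. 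On such $E$ one has $W(x)\le W^\star$ for all $x\in E$, giving $\int_E W\,\mathrm{d}\mu\le (M-t)W^\star$. Taking the infimum and using \eqref{IeqJ} yields \eqref{estJR2}. The non-atomicity of $\mu$ (inherited from Proposition \ref{prsolextrprob}) is what lets me hit the measure $M-t$ on the nose rather than merely $\le t$.

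For the lower bound \eqref{estJR1}: the level is now $M-t/\theta$, so every admissible $E$ satisfies $\mu(X\setminus E)\le t/\theta$, and I must bound $\int_E W\,\mathrm{d}\mu$ from below uniformly in $E$. The idea is to compare the integral of $W$ over $E$ with its integral over the ``top'' set $T=\{x\in X:\,W(x)\ge W^\star\}$ (or a suitable measurable version thereof), which satisfies $\mu(T)\ge t$ by \eqref{dfSVyrdel}. On $E\cap T$ the integrand is $\ge W^\star$, while $\mu(T\setminus E)\le\mu(X\setminus E)\le t/\theta$, so $\mu(E\cap T)\ge t-t/\theta=(\theta-1)t/\theta$. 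Hence
\begin{equation}\label{lowerboundstep}
\int_E W(x)\,\mu(\mathrm{d}x)\ge\int_{E\cap T}W(x)\,\mu(\mathrm{d}x)\ge W^\star\,\mu(E\cap T)\ge\frac{(\theta-1)t}{\theta}\,W^\star.
\end{equation}
Taking the infimum over admissible $E$ and invoking \eqref{IeqJ} gives \eqref{estJR1}. A small technical point to handle carefully is the behaviour of $W^\star$ and of the level sets at the critical value $s=W^\star$ (strict versus non-strict inequalities, and the case $W^\star=0$, where both inequalities are trivial); this is where I expect the only real friction, and it is dispatched by the same monotone-limit arguments used to define $\lambda^\star$ and $W^\star$, together with the finiteness of $\mu$ so that $M-t/\theta$ and $M-t$ lie in $(0,M)$ and Proposition \ref{prsolextrprob} applies. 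The main obstacle, such as it is, is therefore bookkeeping at the threshold level rather than anything structural: once $I_W=J_W$ is in hand, both inequalities reduce to choosing, respectively, a near-optimal test set just below the median level and a universal lower barrier just above it.
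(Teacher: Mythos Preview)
The paper does not actually prove this proposition; it is merely quoted from \cite{Zel1} (Proposition 3.4 there), so there is no in-paper argument to compare against. Your approach via the identity $I_W=J_W$ is the natural one and is essentially correct.

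One small slip in the upper-bound argument: from $\lambda^\star(s)<t$ for all $s>W^\star$ you do \emph{not} get $\mu(\{W\ge W^\star\})\le t$; the right limit of $\lambda^\star$ at $W^\star$ is $\mu(\{W>W^\star\})$, so what you obtain is $\mu(\{W>W^\star\})\le t$, while left-continuity gives $\mu(\{W\ge W^\star\})\ge t$. Consequently the condition ``$X\setminus E\subseteq\{W\ge W^\star\}$'' does not by itself force $W\le W^\star$ on $E$. The fix is exactly the alternative you already flagged: use non-atomicity to choose $E$ with $\{W<W^\star\}\subseteq E\subseteq\{W\le W^\star\}$ and $\mu(E)=M-t$ (possible since $M-t$ lies between $\mu(\{W<W^\star\})$ and $\mu(\{W\le W^\star\})$); then $W\le W^\star$ on $E$ and \eqref{estJR2} follows. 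Your lower-bound argument \eqref{lowerboundstep} is clean and needs no change, since $\mu(T)=\mu(\{W\ge W^\star\})\ge t$ holds as above.
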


Let us formulate some claims from \cite{Zel1}, which were obtained with the help of arguments, mentioned above. 

\begin{proposition}\label{lmmeasinstcap1}[\cite{Zel1}, Theorem 3.7]
	Suppose that for some $r_0>0$ and any $r\in(0,r_0)$ the
	condition 
	\begin{equation}\label{cndSVy}
	\lim_{|\by|\rightarrow\infty}\ V^\star(\delta(r);\,\by,r)=\infty
	\end{equation}
	is satisfied with $\delta(r)=\hat\gamma(r)\mathrm{mes}_d(\mathcal{G}_r(0))$ and
	$\gamma(r)$ satisfies the conditions 
		\begin{equation}\label{cndtildgam}
	\forall\,r\in(0,\,r_0):\;\tilde\gamma(r)\in(0,1)\quad\mathrm{and}\quad\limsup_{r\downarrow
		0}\,r^{-2(d-2)/d}\,\tilde\gamma(r)=\infty.
	\end{equation}
	Then the spectrum of the operator $H=-\Delta+V(\e)$ is discrete.
\end{proposition}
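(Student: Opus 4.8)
The plan is to derive the hypothesis \eqref{cndmolch1} of the Mazya--Shubin Theorem \ref{thMazSh} from condition \eqref{cndSVy}, using Proposition \ref{lmJR} to pass from the rearrangement $V^\star$ to a lower bound on the optimization quantity $J_V$, and Proposition \ref{prsolextrprob} to identify $J_V$ with the infimum $I_V$. First I would fix $r\in(0,r_0)$ and work on the fixed domain $X=\mathcal{G}_r(\by)$ with $W=V$ and $\mu=\mathrm{mes}_d$; this measure is non-atomic and finite, so both Propositions \ref{prsolextrprob} and \ref{lmJR} apply. The key geometric step is to replace the submodular (capacitary) constraint $F\in\mathcal{N}_{\gamma(r)}(\by,r)$ by the additive one $\mathrm{mes}_d(F)\le\delta(r)$: by the isocapacity inequality, $\mathrm{cap}(F)\le\gamma(r)\,\mathrm{cap}(\bar{\mathcal{G}}_r(\by))$ forces $\mathrm{mes}_d(F)$ to be bounded by a quantity comparable to $\gamma(r)^{d/(d-2)}$ times $\mathrm{mes}_d(\mathcal{G}_r(\by))$, which one packages as $\mathrm{mes}_d(F)\le\tilde\gamma(r)\,\mathrm{mes}_d(\mathcal{G}_r(0))$ for a suitable $\tilde\gamma(r)$; then $\delta(r)$ in \eqref{cndSVy} is exactly the quantity such that the set $\{\,F:\ \mathrm{mes}_d(\mathcal{G}_r(\by)\setminus F)\ge \mathrm{mes}_d(\mathcal{G}_r(\by))-\delta(r)\,\}$ contains every admissible $F$. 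Enlarging the feasible set only lowers the infimum, so
\begin{equation}\label{eq:planlowerbd}
\inf_{F\in\mathcal{N}_{\gamma(r)}(\by,r)}\int_{\mathcal{G}_r(\by)\setminus F}V(\e)\,\mathrm{d}\e\ \ge\ I_V\big(\mathrm{mes}_d(\mathcal{G}_r(\by))-\delta(r),\,\mathcal{G}_r(\by),\,\mathrm{mes}_d\big).
\end{equation}

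Next I would apply Proposition \ref{prsolextrprob}(ii) to rewrite the right-hand side of \eqref{eq:planlowerbd} as $J_V\big(\mathrm{mes}_d(\mathcal{G}_r(\by))-\delta(r),\,\mathcal{G}_r(\by),\,\mathrm{mes}_d\big)$, and then feed this into estimate \eqref{estJR1} of Proposition \ref{lmJR} with the substitution $t/\theta=\delta(r)$, i.e. $t=\theta\,\delta(r)$ for a fixed $\theta>1$. This yields
\begin{equation}\label{eq:planJR}
\inf_{F\in\mathcal{N}_{\gamma(r)}(\by,r)}\int_{\mathcal{G}_r(\by)\setminus F}V(\e)\,\mathrm{d}\e\ \ge\ \frac{(\theta-1)\theta\,\delta(r)}{\theta}\,V^\star(\theta\,\delta(r);\,\mathcal{G}_r(\by),\,\mathrm{mes}_d).
\end{equation}
Since $\delta(r)>0$ is fixed once $r$ is fixed, and since $V^\star(\theta\delta(r);\by,r)\ge V^\star(\delta(r);\by,r)$ up to the monotonicity of the rearrangement in its mass argument (or by simply re-absorbing the factor $\theta$ into the definition of $\tilde\gamma(r)$), hypothesis \eqref{cndSVy} gives that the right-hand side of \eqref{eq:planJR} tends to $\infty$ as $|\by|\to\infty$. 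Hence \eqref{cndmolch1} holds for this $r$, and the last bookkeeping step is to check the admissibility condition \eqref{cndgammar} for $\gamma(r)$: the relation between $\gamma(r)$ and $\tilde\gamma(r)$ coming from the isocapacity inequality is $\tilde\gamma(r)\asymp \gamma(r)^{d/(d-2)}$, so $\limsup_{r\downarrow0} r^{-2}\gamma(r)=\infty$ is equivalent to $\limsup_{r\downarrow0} r^{-2(d-2)/d}\tilde\gamma(r)=\infty$, which is precisely \eqref{cndtildgam}. Invoking Theorem \ref{thMazSh} then finishes the proof.

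The main obstacle is the precise form of the constraint reduction from capacity to Lebesgue measure: one must quote the correct isocapacity inequality $\mathrm{mes}_d(F)\le c_d\,\mathrm{cap}(F)^{d/(d-2)}$ and also control $\mathrm{cap}(\bar{\mathcal{G}}_r(\by))$ from above — here the fact that $\mathcal{G}$ is star-shaped with $\mathrm{diam}(\mathcal{G})=2$ and contains a ball $B_\rho(0)$ gives $\mathrm{cap}(\bar{\mathcal{G}}_r(\by))\asymp r^{d-2}$ with constants independent of $\by$, which is what makes $\delta(r)$ a genuine function of $r$ alone. A secondary point requiring care is the exact exponent bookkeeping so that \eqref{cndtildgam} matches \eqref{cndgammar} after the power $d/(d-2)$ is taken; this is routine but must be done explicitly. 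Everything else — the non-atomicity and finiteness needed for Propositions \ref{prsolextrprob} and \ref{lmJR}, and the monotonicity of $I_V$ in the feasible set — is immediate.
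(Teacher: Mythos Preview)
The paper does not prove this proposition; it is imported verbatim from \cite{Zel1} (Theorem~3.7 there) and quoted in Section~\ref{sec:prevwork} without argument. However, the Introduction describes exactly the strategy of \cite{Zel1}: estimate the cost functional in \eqref{cndmolch1} from below by replacing the capacitary constraint $F\in\mathcal{N}_{\gamma(r)}(\by,r)$ with the additive Lebesgue constraint via the isocapacity inequality, and then solve the resulting additive optimization problem via Propositions~\ref{prsolextrprob} and~\ref{lmJR}. Your proposal follows precisely this route, so it agrees with the intended proof.

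One slip to correct: you write ``$V^\star(\theta\delta(r);\by,r)\ge V^\star(\delta(r);\by,r)$ up to the monotonicity of the rearrangement in its mass argument''. The nonincreasing rearrangement $t\mapsto V^\star(t;\by,r)$ is \emph{nonincreasing}, so the inequality goes the other way, and the direct comparison fails. Your parenthetical fix --- absorbing the factor $\theta$ into $\tilde\gamma(r)$ --- is the correct repair: since condition~\eqref{cndtildgam} is scale-invariant under multiplication of $\tilde\gamma$ by a fixed constant $\theta>1$ (at least for $r$ small enough that $\theta\tilde\gamma(r)<1$), you may simply assume from the outset that the hypothesis \eqref{cndSVy} is given with $\theta\delta(r)$ in place of $\delta(r)$. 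State it that way rather than via the false monotonicity claim.
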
 

The following claim describes a part of $\mathrm{BP}_{eq}(\by,\,r)$:

\begin{proposition}\label{lmdescribedens}[\cite{Zel1}, Proposition A.2]
	The set $\mathrm{BP}_{eq}(\by,\,r)$ contains the set ${\mathrm
		M}_f(r,\by)$ of absolute continuous measures described in
	Definition \ref{dfMacry}.
\end{proposition}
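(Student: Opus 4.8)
\emph{Proof proposal.} The plan is to verify the two defining conditions of the base polyhedron — the subcapacitarity $\mu(A)\le\mathrm{cap}(A)$ for all Borel $A\subseteq\bar B_r(\by)$ and the normalization $\mu(\bar B_r(\by))=\mathrm{cap}(\bar B_r(\by))$ — first for the extremal densities $\rho_s:=\mathrm{cap}(\bar B_r(0))\,(f'\circ s)$ with $s\in\mathcal{S}(\bar B_r(\by),\,m_{d,r})$, and then to propagate them to the whole set ${\mathcal Co}(\by,\,r)$. Here one uses that for $\mathcal{G}=B_1(0)$ one has $\mathcal{G}_r(\by)=B_r(\by)$, so that $\mathrm{BP}(\by,\,r)=\mathrm{BP}(\bar B_r(\by))$. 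Each functional $\rho\mapsto\int_A\rho\,m_{d,r}(\mathrm{d}\e)$ is bounded and linear on $L_1(\bar B_r(\by),\,m_{d,r})$, so both conditions, being affine in the density, pass from the $\rho_s$ to their convex combinations and then to $L_1(m_{d,r})$-limits; hence it suffices to treat the $\rho_s$. (Note each $f'\circ s$ lies in $L_1(m_{d,r})$, since $\int f'\circ s\,m_{d,r}(\mathrm{d}\e)=\int_0^1 f'(t)\,\mathrm{d}t=1$, so the closed convex hull in Definition \ref{dfMacry} is well defined.)

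For the normalization, the measure-preserving property of $s$ means the image of $m_{d,r}$ under $s$ is Lebesgue measure on $[0,1]$, whence $\int_{\bar B_r(\by)}(f'\circ s)\,m_{d,r}(\mathrm{d}\e)=\int_0^1 f'(t)\,\mathrm{d}t=f(1)-f(0)=1$, and therefore $\mu_s(\bar B_r(\by))=\mathrm{cap}(\bar B_r(0))=\mathrm{cap}(\bar B_r(\by))$ by translation invariance of the capacity.

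The core is the subcapacitary bound. Fix a Borel set $A\subseteq\bar B_r(\by)$, put $a=m_{d,r}(A)$, and let $\nu$ be the image under $s$ of the restricted measure $\mathbf{1}_A\,m_{d,r}$ on $[0,1]$, so $\nu([0,1])=a$ and $\nu(B)=m_{d,r}\big(A\cap s^{-1}(B)\big)\le m_{d,r}(s^{-1}(B))=\lambda(B)$ for every Borel $B\subseteq[0,1]$. Since $f'(t)=\frac{d-2}{d}\,t^{-2/d}$ is non-increasing on $(0,1]$ — this is where $d\ge 3$ enters, making $f$ concave — a standard bathtub/rearrangement argument gives $\int_A(f'\circ s)\,m_{d,r}(\mathrm{d}\e)=\int_{[0,1]}f'\,\mathrm{d}\nu\le\int_0^a f'(t)\,\mathrm{d}t=f(a)$. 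Since $a=\mathrm{mes}_d(A)/\mathrm{mes}_d(B_r(0))$ and $\mathrm{cap}(\bar B_r(0))=\gamma_d\,\mathrm{mes}_d(B_r(0))^{(d-2)/d}$ with $\gamma_d$ a dimensional constant independent of $r$, we obtain $\mu_s(A)=\mathrm{cap}(\bar B_r(0))\,f(a)=\gamma_d\,\mathrm{mes}_d(A)^{(d-2)/d}$, and the classical isocapacitary inequality (balls minimize the harmonic capacity among sets of a given volume) yields $\gamma_d\,\mathrm{mes}_d(A)^{(d-2)/d}\le\mathrm{cap}(A)$. Thus $\mu_s\in\mathrm{BP}(\bar B_r(\by))$, and by the reduction in the first paragraph ${\mathrm M}_f(\by,\,r)\subseteq\mathrm{BP}(\bar B_r(\by))$.

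Finally, each $\mu\in{\mathrm M}_f(\by,\,r)$ is a finite Radon measure equivalent to $\mathrm{mes}_d$: since $s\le 1$ forces $f'\circ s\ge f'(1)=\frac{d-2}{d}$ a.e., every element of $\mathrm{co}\{f'\circ s\}$, and hence (the bound being stable under $L_1$-limits) every density in ${\mathcal Co}(\by,\,r)$, is bounded below a.e.\ by $\frac{d-2}{d}\,\mathrm{cap}(\bar B_r(0))>0$; being also in $L_1(m_{d,r})$ it is finite a.e., so $\mu$ and $\mathrm{mes}_d$ are mutually absolutely continuous. Combined with the previous paragraph this gives ${\mathrm M}_f(\by,\,r)\subseteq\mathrm{BP}_{eq}(\by,\,r)$. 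The main obstacle is the estimate of the third paragraph: one must match the rearrangement inequality with the precise scaling constant $\gamma_d$ of the capacity of a ball so that the isocapacitary inequality closes exactly, and one must check that neither the subcapacitary bound nor the positive lower bound on the densities is destroyed by the $L_1(m_{d,r})$-closure — both survive because they are affine, resp.\ monotone, conditions stable under $L_1$-convergence.
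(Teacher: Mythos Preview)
The paper does not actually prove this proposition; it is quoted verbatim from the earlier work \cite{Zel1} (as Proposition~A.2 there), so there is no in-paper proof to compare against. That said, your argument is the natural one and is essentially correct: reduce to the extreme densities $f'\circ s$, use the measure-preserving property for the normalization, and for subcapacitarity combine the bathtub bound $\int_A (f'\circ s)\,dm_{d,r}\le f(m_{d,r}(A))$ (valid because $f'$ is nonincreasing, i.e.\ $f$ is concave for $d\ge 3$) with the isocapacitary inequality $\mathrm{cap}(A)\ge \gamma_d\,\mathrm{mes}_d(A)^{(d-2)/d}$; then pass to convex combinations and $L_1$-limits, and finally use the uniform lower bound $f'\circ s\ge f'(1)=(d-2)/d$ to get equivalence with Lebesgue measure. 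This is precisely the mechanism one expects (and almost certainly the one in \cite{Zel1}).

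One slip to fix: in the sentence ``we obtain $\mu_s(A)=\mathrm{cap}(\bar B_r(0))\,f(a)=\gamma_d\,\mathrm{mes}_d(A)^{(d-2)/d}$'' the first ``$=$'' should be ``$\le$'', since the preceding bathtub step only gives an inequality. With that corrected, the chain $\mu_s(A)\le \mathrm{cap}(\bar B_r(0))\,f(a)=\gamma_d\,\mathrm{mes}_d(A)^{(d-2)/d}\le\mathrm{cap}(A)$ is exactly what you need.
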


 In \cite{Zel1} we have
denoted by $\alpha_\mu(\e)$ $(\mu\in\mathrm{BP}_{eq}(\by,r))$ the density of the measure
$\mathrm{mes}_d$ with respect to $\mu$, i.e.,
\begin{equation}\label{dfalphamu}
\alpha_\mu:=\frac{\mathrm{d}\,\mathrm{mes}_d}{\mathrm{d}\,\mu}.
\end{equation}

In the following claim the set $M_f(\by,r)$ has been used: 

\begin{proposition}\label{crusecoredstort}[\cite{Zel1}, Corollary 3.12]
	Suppose that $d\ge 3$, $V(\e)\ge 0$, $V\in L_{1,\,loc}(\R^d)$.
	If the condition
	\begin{equation}\label{cndZmunew}
	\lim_{|\by|\rightarrow\infty}\;\sup_{\mu\in\mathrm{M}_f(\by,\,r)}Z_{\mu}^\star(\psi_\mu(r);\,
	B_r(\by);\,\mu)=\infty
	\end{equation}
	is satisfied for some $r_0>0$ and any $r\in(0,r_0)$, where $Z_{\mu}(\e)=\alpha_\mu(\e)\, V(\e)$,
	$\psi_\mu(\by,r)=\gamma(r)\mu(B_r(\by))$ and $\gamma(r)$ satisfies
	conditions \eqref{cndgammar}.
	Then the spectrum of the operator
	$H=-\Delta+V(\e)$ is discrete.
\end{proposition}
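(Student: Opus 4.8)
\medskip
\noindent\textbf{Proof proposal.}
The plan is to deduce Proposition~\ref{crusecoredstort} from the Mazya--Shubin criterion (Theorem~\ref{thMazSh}) applied to the balls $\mathcal{G}_r(\by)=B_r(\by)$, which arise from the admissible domain $\mathcal{G}=B_1(0)$ (bounded, star-shaped with respect to $B_{1/2}(0)$, of diameter $2$). First I would fix $\theta>1$ (say $\theta=2$) and put $\hat\gamma(r):=\gamma(r)/\theta$; since $\gamma$ obeys \eqref{cndgammar}, so does $\hat\gamma$, because $\hat\gamma(r)\in(0,1)$ and $\limsup_{r\downarrow0}r^{-2}\hat\gamma(r)=\theta^{-1}\limsup_{r\downarrow0}r^{-2}\gamma(r)=\infty$. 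It then suffices to verify condition \eqref{cndmolch1} for the balls $B_r(\by)$ with $\hat\gamma$ in place of $\gamma$.

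The core step is a lower bound on the Mazya--Shubin cost functional that is uniform over the competing sets $F$. Fix $r\in(0,r_0)$, $\by\in\R^d$, a compact $F\in\mathcal{N}_{\hat\gamma(r)}(\by,r)$, and $\mu\in\mathrm{M}_f(\by,r)$. By Proposition~\ref{lmdescribedens}, $\mu\in\mathrm{BP}_{eq}(\by,r)\subseteq\mathrm{BP}(\by,r)$, so (by \eqref{defcore}) $\mu$ is a finite, non-atomic Radon measure, equivalent to $\mathrm{mes}_d$, with $\mu(A)\le\mathrm{cap}(A)$ for all Borel $A$ and $\mu(\bar B_r(\by))=\mathrm{cap}(\bar B_r(\by))$; by translation invariance of capacity, $\psi_\mu(\by,r)=\gamma(r)\,\mathrm{cap}(\bar B_r(\by))$ then depends on neither $\by$ nor $\mu$. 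Writing $E:=B_r(\by)\setminus F$ and $Z_\mu=\alpha_\mu V\ge0$, the identity $\mathrm{d}\,\mathrm{mes}_d=\alpha_\mu\,\mathrm{d}\mu$ gives $\int_E V(\e)\,\mathrm{d}\e=\int_E Z_\mu\,\mathrm{d}\mu$, while $\int_{B_r(\by)}Z_\mu\,\mathrm{d}\mu=\int_{B_r(\by)}V\,\mathrm{d}\,\mathrm{mes}_d<\infty$ (as $V\in L_{1,loc}$), so $Z_\mu\in L_1(B_r(\by),\mu)$. Since $\mu(\partial B_r(\by))=0$ and $\mu(F)\le\mathrm{cap}(F)\le\hat\gamma(r)\,\mathrm{cap}(\bar B_r(\by))=\psi_\mu(\by,r)/\theta$, one has
\[
\mu(E)=\mu(B_r(\by))-\mu(F)\ \ge\ \mu(B_r(\by))-\frac{\psi_\mu(\by,r)}{\theta},
\]
so $E\in\mathcal{E}\big(\mu(B_r(\by))-\psi_\mu(\by,r)/\theta,\,B_r(\by),\,\mu\big)$. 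Now I would apply Proposition~\ref{prsolextrprob}(ii) together with estimate \eqref{estJR1} of Proposition~\ref{lmJR}, taken with $t=\psi_\mu(\by,r)\in(0,\mu(B_r(\by)))$, to obtain
\[
\int_{B_r(\by)\setminus F}V(\e)\,\mathrm{d}\e\ \ge\ I_{Z_\mu}\!\Big(\mu(B_r(\by))-\tfrac{\psi_\mu(\by,r)}{\theta},\,B_r(\by),\,\mu\Big)\ \ge\ \frac{\theta-1}{\theta}\,\psi_\mu(\by,r)\,Z_\mu^\star\big(\psi_\mu(\by,r);\,B_r(\by);\,\mu\big).
\]

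Then I would take the infimum over $F\in\mathcal{N}_{\hat\gamma(r)}(\by,r)$ and the supremum over $\mu\in\mathrm{M}_f(\by,r)$ (legitimate because the prefactor $\frac{\theta-1}{\theta}\psi_\mu(\by,r)$ is a fixed positive number depending only on $r$), yielding
\[
\inf_{F\in\mathcal{N}_{\hat\gamma(r)}(\by,r)}\int_{B_r(\by)\setminus F}V(\e)\,\mathrm{d}\e\ \ge\ \frac{\theta-1}{\theta}\,\psi_\mu(\by,r)\,\sup_{\mu\in\mathrm{M}_f(\by,r)}Z_\mu^\star\big(\psi_\mu(\by,r);\,B_r(\by);\,\mu\big).
\]
Letting $|\by|\to\infty$ and invoking hypothesis \eqref{cndZmunew}, the right-hand side tends to $\infty$; hence \eqref{cndmolch1} holds for the balls $B_r(\by)$ with $\hat\gamma$, and Theorem~\ref{thMazSh} gives discreteness of the spectrum of $H$.

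The step I expect to be the main obstacle is the parameter calibration in the middle paragraph: one must loosen the submodular constraint $F\in\mathcal{N}_{\gamma(r)}$ to an additive constraint on $\mu(E)$ \emph{and} choose $\theta$ so that this additive constraint takes precisely the form $\mu(X)-t/\theta$ demanded by \eqref{estJR1} with $t=\psi_\mu(\by,r)$, all while keeping $\hat\gamma(r)=\gamma(r)/\theta$ admissible for Mazya--Shubin. The supporting facts---integrability of $Z_\mu$, non-atomicity and finiteness of $\mu$ (so Propositions~\ref{prsolextrprob} and \ref{lmJR} apply), and the $\by$- and $\mu$-independence of $\mathrm{cap}(\bar B_r(\by))$ and $\psi_\mu(\by,r)$ (so the limit in \eqref{cndZmunew} transfers to \eqref{cndmolch1})---are immediate from the definition \eqref{defcore} of the base polyhedron, absolute continuity of $\mu$, and translation invariance of capacity.
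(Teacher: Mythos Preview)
Your proof is correct and follows precisely the line of argument that underlies \cite{Zel1}, Corollary~3.12 (which the present paper cites without reproving): replace the submodular constraint $F\in\mathcal{N}_{\hat\gamma(r)}$ by the additive constraint $\mu(E)\ge\mu(B_r(\by))-\psi_\mu(\by,r)/\theta$ via the base-polyhedron inequality $\mu(F)\le\mathrm{cap}(F)$, convert $\int V\,\mathrm{d}\e$ to $\int Z_\mu\,\mathrm{d}\mu$, and apply Propositions~\ref{prsolextrprob} and~\ref{lmJR}. The same $\theta$-rescaling trick with $\tilde\gamma(r)=\gamma(r)/\theta$ and the same two-step appeal to \eqref{IeqJ} and \eqref{estJR1} appear verbatim in the paper's own proof of Theorem~\ref{thdoublrear}, so your argument is in complete agreement with the paper's methodology.
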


Consider the covering of the space $\R^d$ by the cubes $Q_1(\vec l)\;(\vec l\in\Z^d)$ and for any $\vec l\in\Z^d$ consider a sequence $\{D_j(\vec l)\}_{j=1}^\infty$ of 
subsets of $Q_1(\vec l)$. Furthermore, for some integers $n>0$ and $m>1$ consider the $m$-adic partition of each cube $Q_1(\vec l)$: $\{Q(\vec\xi,n)\}_{\vec\xi\in\Xi_n(\vec l)}$,
where $Q(\vec\xi,n)=Q_{m^{-n}}(\vec\xi)$ and $\Xi_n(\vec l)=\{\vec\xi\in m^{-n}\cdot\Z^d\,:\,Q(\vec\xi,n)\subset  Q_1(\vec l)\}$. Denote
\begin{equation}\label{dfXijvecl}
\Xi_n(\vec l,\,j)=\{\vec\xi\in m^{-n}\cdot\Z^d\,:\,Q(\vec\xi,n)\subseteq  D_j(\vec l)\},
\end{equation} 

The following claim was based on the previous claim and on the concept of a $(\log_m,\,\theta)$- dense system (Definition \ref{dfdenslogmtetpart1}):

\begin{proposition}\label{prlogmdensdoubrearr}[\cite{Zel1}, Theorem 3.14]
	Suppose that $\theta\in(0,1)$ and for each $\vec l\in\Z^d$ the sequence $\{D_j(\vec l)\}_{j=1}^\infty$ forms a $(\log_m,\,\theta)$- dense system in $Q_1(\vec l)$. Furthermore,
	suppose that 
	\begin{equation}\label{Xinonempty2}
	\forall\;\vec l\in\Z^d,\;n\in\N,\;j\in\{1,2.\dots,n\}:\quad \Xi_{n}(\vec l,j)\neq\emptyset. 
	\end{equation}
	Let $\gamma(r)$ be a nondecreasing monotone function 
	satisfying condition \eqref{cndgammar}. If for 
	any natural $n$ the condition 
	\begin{equation}\label{cndlogdensdoubrearrprev}
	\lim_{|\vec l|\rightarrow\infty}\;\min_{\vec\xi\in\bigcup_{j=1}^n\Xi_{n}(\vec l,\,j)} V^\star\big(\psi(m,n);\,Q(\vec\xi,n)\big)=\infty
	\end{equation}
	is satisfied with $\psi(m,n)=\gamma(m^{-n})\mathrm{mes}_d(Q(\vec 0,n))$, then the spectrum of the operator $H=-\Delta+V(\e)\cdot$ is discrete.
\end{proposition}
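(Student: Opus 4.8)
The plan is to derive this from Corollary \ref{crusecoredstort} (Proposition \ref{crusecoredstort} in the numbering above) by choosing, for each target cube $Q_r(\bz)$ appearing in the Mazya--Shubin type hypothesis, an appropriate ball $B_r(\by)$ and an appropriate measure $\mu\in \mathrm{M}_f(\by,r)$ concentrated so as to exploit the $(\log_m,\theta)$-density of the system $\{D_j(\vec l)\}$. First I would fix $r\in(0,r_0)$ small enough (in particular $r<\min\{1,(\theta m^2)^{-1}\}$, so that Definition \ref{dfdenslogmtetpart1}(b) applies), and given $\by\to\infty$ locate the integer vector $\vec l=\vec l(\by)$ with $\by\in Q_1(\vec l)$; then $|\vec l|\to\infty$ as $|\by|\to\infty$. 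Writing $n=n(r)=\big[\log_m\big(\tfrac{1}{\theta r}\big)\big]$, the density property produces an index $j\in\{1,\dots,n\}$, a regular parallelepiped $\Pi\subseteq D_j(\vec l)$, and a subcube $Q_{\theta r}(\bs)\subseteq \Pi\cap Q_r(\bz)$. The point is that on this subcube one can place mass of a measure from the base polyhedron comparable to $\mathrm{cap}(\bar B_r(\by))$, using the extremal structure of $\mathrm{M}_f(\by,r)$ recorded in Definition \ref{dfMacry}: choosing $s\in\mathcal{S}(\bar B_r(\by),m_{d,r})$ that sends $Q_{\theta r}(\bs)$ onto an interval $[0,\tau]$ with $\tau\asymp \theta^d$ forces the density $f'\circ s$ to be large on $Q_{\theta r}(\bs)$, hence $\alpha_\mu$ (the reciprocal density, \eqref{dfalphamu}) is comparably large there as well.

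Next I would compare the quantity $Z_\mu^\star(\psi_\mu(\by,r);B_r(\by);\mu)$ from \eqref{cndZmunew} with the Lebesgue rearrangement $V^\star\big(\psi(m,n);Q(\vec\xi,n)\big)$ appearing in \eqref{cndlogdensdoubrearrprev}. Since $Z_\mu=\alpha_\mu V$ and $\mu$ is equivalent to $\mathrm{mes}_d$ with $\alpha_\mu$ bounded below by a positive constant times $r^{-(d-2)}\theta^{-(d-2)}$ (or similar, tracking the scaling of $f'$) on the chosen subcube, the superlevel sets of $Z_\mu$ with respect to $\mu$ contain, up to these explicit constants, the superlevel sets of $V$ with respect to Lebesgue measure on the $m$-adic cube $Q(\vec\xi,n)\subseteq\Pi$ that one selects inside $Q_{\theta r}(\bs)$ — here the nonemptiness hypothesis \eqref{Xinonempty2} guarantees such an $m$-adic cube with $\vec\xi\in\Xi_n(\vec l,j)$ actually exists. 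The monotonicity of $\gamma$ together with \eqref{cndgammar} is used to check that the threshold $\psi_\mu(\by,r)=\gamma(r)\mu(B_r(\by))$ is, after restriction to the subcube and the change of measure, dominated by $\psi(m,n)=\gamma(m^{-n})\mathrm{mes}_d(Q(\vec 0,n))$ up to the allowed constants; this is where the particular exponent $2(d-2)/d$ implicit in the capacity scaling and in $f(t)=t^{(d-2)/d}$ enters, and it is why one needs $\gamma$ nondecreasing and $\limsup_{r\downarrow 0} r^{-2}\gamma(r)=\infty$.

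Putting these estimates together, for every $r$ in a suitable range and every $\by$ far enough out, one gets
\begin{equation}\label{eq:comparison}
\sup_{\mu\in\mathrm{M}_f(\by,r)} Z_\mu^\star\big(\psi_\mu(\by,r);B_r(\by);\mu\big)\ \ge\ c(r,\theta,d)\cdot \min_{\vec\xi\in\bigcup_{j=1}^n\Xi_n(\vec l,j)} V^\star\big(\psi(m,n);Q(\vec\xi,n)\big),
\end{equation}
with $c(r,\theta,d)>0$, and the right-hand side tends to $\infty$ as $|\by|\to\infty$ by hypothesis \eqref{cndlogdensdoubrearrprev}, since $|\vec l(\by)|\to\infty$. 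Hence \eqref{cndZmunew} holds, and Corollary \ref{crusecoredstort} gives discreteness of the spectrum. The main obstacle I anticipate is the bookkeeping in the previous paragraph: one must verify that a single $m$-adic cube $Q(\vec\xi,n)$ can be fitted inside the parallelepiped $\Pi\cap Q_r(\bz)$ (so that one may pass from the arbitrary subcube $Q_{\theta r}(\bs)$ furnished by the density property to an $m$-adic one on which $V^\star$ is controlled), and that the numerous scaling constants — from the dilation $Q_r\leftrightarrow Q_1$, from the definition of $m_{d,r}$, from $\mathrm{cap}(\bar B_r(0))\asymp r^{d-2}$, and from $f'$ — combine into a strictly positive $c(r,\theta,d)$ uniformly in $\by$. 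The choice of $n=\big[\log_m(1/(\theta r))\big]$ is exactly calibrated so that $m^{-n}\asymp \theta r$, which makes $Q(\vec\xi,n)$ comparable in size to $Q_{\theta r}(\bs)$ and keeps all these constants independent of $\by$.
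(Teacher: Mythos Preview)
Your overall strategy --- reduce to Proposition~\ref{crusecoredstort} by exhibiting a suitable $\mu\in\mathrm{M}_f(\by,r)$ --- is the right one and matches what the paper does. But there is a genuine error in the construction of $\mu$, and your route diverges from the paper's in a way that leaves a real gap.

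The error: you write that sending $Q_{\theta r}(\bs)$ to $[0,\tau]$ makes $f'\circ s$ large there, ``hence $\alpha_\mu$ \ldots\ is comparably large there as well.'' These are opposite. By \eqref{dfmusnewest} the density of $\mu_s$ with respect to Lebesgue measure is proportional to $f'\circ s$, and by \eqref{dfalphamu} $\alpha_\mu = \mathrm{d}\,\mathrm{mes}_d/\mathrm{d}\mu$ is its reciprocal; so $f'\circ s$ large forces $\alpha_\mu$ \emph{small}. Your two goals --- put most of the $\mu$-mass on the subcube, and make $Z_\mu=\alpha_\mu V$ large there --- pull in opposite directions, and you have not shown they can be balanced to yield \eqref{eq:comparison}. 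The ``bookkeeping obstacle'' you flag is therefore not just bookkeeping: as stated, the construction is inconsistent.

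The paper (via the combination of the proof of Theorem~\ref{thcondlebesgue} and the proof of Proposition~\ref{threlprlogdenstlebes}) avoids this tension by separating the two reductions. First it fixes a single explicit $\mu_s\in\mathrm{M}_f(\by,r)$ --- the one with $s_{r,\by}$ given by the first-coordinate projection \eqref{dfsrbynewest}--\eqref{dfP1newest} --- and invokes Lemma~\ref{lmineqforrearnewest} to pass from $Z_{\mu_s}^\star(\,\cdot\,;B_r(\by);\mu_s)$ to the Lebesgue rearrangement $V^\star(\,\cdot\,;Q_{\tilde r}(\tilde\by))$ on a cube $Q_{\tilde r}(\tilde\by)\subseteq B_r(\by)$. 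Only \emph{after} this change of measure does it bring in the $(\log_m,\theta)$-density, via Lemma~\ref{prestbarVstarbelownewest} (together with Lemma~\ref{lminclrear2}), to descend from $V^\star$ on $Q_{\tilde r}(\tilde\by)$ to the minimum of $V^\star$ over the $m$-adic cubes in $\bigcup_{j=1}^n\Xi_n(\vec l,j)$, with $n=\big[\log_m(1/(\theta\tilde r))\big]+2$. The measure $\mu_s$ never needs to ``know'' where the good subcube is; the density argument happens entirely at the Lebesgue level, which is why the constants come out uniform in $\by$ without any balancing act.

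If you want to salvage a one-step argument with a $\mu$ tailored to the subcube, you would have to send $Q_{\theta r}(\bs)$ to an interval near $1$ (where $f'$ is bounded below by $(d-2)/d$) rather than near $0$, and then carefully track how the $\mu$-level $\psi_\mu(r)=\gamma(r)\,\mathrm{cap}(\bar B_r(0))$ compares to $\mu(Q_{\theta r}(\bs))$ --- but at that point you are essentially reproving Lemma~\ref{lmineqforrearnewest}, and it is cleaner to quote it.
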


\section{Main results} \label{sec:mainres}
\setcounter{equation}{0} 

On the base of Proposition \ref{crusecoredstort} we shall prove the following theorem:

\begin{theorem}\label{thcondlebesgue}
	If for some $r_0>0$ and a  function $\gamma(r)$,	satisfying conditions \eqref{cndgammar}, the condition
	\begin{equation}\label{cndlebesgue}
	\lim_{|\by|\rightarrow\infty} V^\star(\gamma(r)\mathrm{mes}_d(Q_r(\by));\,Q_r(\by))=\infty
	\end{equation}
	is satisfied for any $r\in(0,r_0]$, then the spectrum of the operator
	$H=-\Delta+V(\e)$ is discrete.
\end{theorem}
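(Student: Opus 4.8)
The plan is to deduce Theorem \ref{thcondlebesgue} from Proposition \ref{crusecoredstort} by choosing, for each ball $B_r(\by)$, a single convenient measure $\mu\in \mathrm{M}_f(\by,r)$ — namely Lebesgue measure restricted to $B_r(\by)$ and normalized so that $\mu(B_r(\by))=\mathrm{cap}(\bar B_r(\by))$ — and then translating the hypothesis \eqref{cndlebesgue}, stated on cubes $Q_r(\by)$ and for the Lebesgue rearrangement of $V$, into the statement \eqref{cndZmunew}, stated on balls and for the rearrangement of $Z_\mu=\alpha_\mu V$. First I would observe that the constant density $\alpha_\mu(\e)\equiv \mathrm{mes}_d(B_r(0))/\mathrm{cap}(\bar B_r(0))=:c(r)$ indeed arises from a measure in $\mathrm{M}_f(\by,r)$: the function $f(t)=t^{(d-2)/d}$ has $f'(t)=\frac{d-2}{d}t^{-2/d}$, and $f'\circ s$ for a measure-preserving $s$ has average value $\frac{d-2}{d}\cdot\frac{d}{d-2}=1$ over the normalized Lebesgue measure, so the constant function $1$ lies in $\overline{\mathrm{co}}\{f'\circ s\}$ (e.g.\ as the $L_1$-limit of averages of such rearrangements, or directly since $s(\e)=m_{d,r}(B_{|\e-\by|}(\by))$ gives $f'\circ s$ with integral $1$ and one takes a suitable convex combination); hence $\mathrm{cap}(\bar B_r(0))\cdot 1\in{\mathcal Co}(\by,r)$, and the corresponding absolutely continuous measure $\mu$ has constant density $\mathrm{cap}(\bar B_r(0))/\mathrm{mes}_d(B_r(0))$ with respect to $\mathrm{mes}_d$, so $\alpha_\mu\equiv c(r)$ is constant.

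With this $\mu$ fixed, $Z_\mu(\e)=c(r)V(\e)$, and since $\mu$ is a constant multiple of Lebesgue measure on $B_r(\by)$, the non-increasing rearrangement scales in an elementary way: $Z_\mu^\star(\tau;B_r(\by);\mu)=c(r)\,V^\star\big(\tau\cdot\mathrm{cap}(\bar B_r(0))^{-1}\mathrm{mes}_d(B_r(0));\,B_r(\by)\big)$, where on the right the rearrangement is with respect to Lebesgue measure. Plugging in $\tau=\psi_\mu(\by,r)=\gamma(r)\mu(B_r(\by))=\gamma(r)\mathrm{cap}(\bar B_r(0))$ collapses the measure-normalization factors, giving $Z_\mu^\star(\psi_\mu(\by,r);B_r(\by);\mu)=c(r)\,V^\star\big(\gamma(r)\mathrm{mes}_d(B_r(\by));\,B_r(\by)\big)$. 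Since $c(r)>0$ is a fixed positive constant for each fixed $r$, the limit in \eqref{cndZmunew} (even just with the supremum over $\mathrm{M}_f(\by,r)$ replaced by this one measure) is infinite if and only if $\lim_{|\by|\to\infty}V^\star(\gamma(r)\mathrm{mes}_d(B_r(\by));B_r(\by))=\infty$.

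It remains to pass between balls and cubes. Here I would use the comparability of rearrangements over $B_r(\by)$ and over a cube $Q_{r'}(\by)$ contained in it, or over a cube containing it: if $A\subseteq A'$ then $V^\star(t;A;\mathrm{mes}_d)\le V^\star(t;A';\mathrm{mes}_d)$ for $t\le\mathrm{mes}_d(A)$ (a larger domain can only make the rearrangement larger at a fixed level), and by choosing the inscribed/circumscribed cubes of $B_r(\by)$ one gets, after adjusting $r$ by a fixed dimensional factor and $\gamma(r)$ by the ratio of volumes (a bounded factor that preserves condition \eqref{cndgammar} since $\limsup_{r\downarrow 0}r^{-2}\gamma(r)$ is only multiplied by a constant), that \eqref{cndlebesgue} on cubes for all small $r$ implies the ball version for all small $r$, with a possibly rescaled $\gamma$. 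The main obstacle, and the step requiring the most care, is exactly this monotonicity/rescaling bookkeeping: one must verify that the required freedom in the radius range $(0,r_0]$ and in the function $\gamma(r)$ is compatible — i.e.\ that replacing $r$ by $\lambda r$ and $\gamma$ by a constant multiple still yields an admissible pair in Proposition \ref{crusecoredstort}'s hypothesis — and that the hypothesis \eqref{cndlebesgue}, assumed for \emph{every} $r\in(0,r_0]$, supplies the cube of the right size after these substitutions. Once that is checked, Proposition \ref{crusecoredstort} applies and the discreteness of the spectrum of $H$ follows. $\qquad\blacksquare$
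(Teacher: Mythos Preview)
Your approach is correct and more elementary than the paper's. The paper selects, for each ball $B_r(\by)$, the \emph{non-uniform} measure $\mu_s\in M_f(\by,r)$ built from the projection-based measure-preserving map $s_{r,\by}$ of \eqref{dfmusnewest}--\eqref{dfP1newest}, and then invokes Lemma~\ref{lmineqforrearnewest} (a technical comparison lemma imported from~\cite{Zel1}) to pass from $Z_{\mu_s}^\star$ on the ball to $V^\star$ on a sub-cube. You instead take the constant-density member of $M_f(\by,r)$, so that $\alpha_\mu\equiv c(r)$ and $Z_\mu^\star$ is simply $c(r)\,V^\star$ with both rearrangements taken against Lebesgue measure on the same ball; the ball-to-cube step then needs only the domain-monotonicity of $V^\star$ (Lemma~\ref{lminclrear2}) applied to the inscribed cube $Q_{r/\sqrt d}(\by)$, together with the harmless rescaling $\bar\gamma(r)=\kappa\,\gamma(r/\sqrt d)$. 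Your route bypasses Lemma~\ref{lmineqforrearnewest} entirely; the paper's choice is presumably made because $\mu_s$ and that lemma are already available from~\cite{Zel1} and are reused verbatim in the proof of Theorem~\ref{thdoubrearrleb}. One point in your sketch does need tightening: the assertion that the constant function $1$ lies in $\overline{\mathrm{co}}\{f'\circ s:s\in\mathcal{S}\}$ is true, but your parenthetical is not a justification (every $f'\circ s$ already has integral~$1$, yet none is constant, so ``one takes a suitable convex combination'' of a single such function says nothing). For a clean proof, fix $s_0\in\mathcal{S}$, set $s_\theta(\e)=s_0(\e)+\theta\bmod 1$, and verify directly that $\tfrac{1}{n}\sum_{k=0}^{n-1}f'\circ s_{k/n}\to 1$ in $L_1$; alternatively, cite the Hardy--Littlewood--P\'olya / Ryff description of the $L_1$-closed convex hull of the rearrangement class of an $L_1$ function, in which the constant equal to the mean is always present.
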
 

The following relation between Theorem \ref{thcondlebesgue} and Proposition \ref{prlogmdensdoubrearr} is valid:
\begin{proposition}\label{threlprlogdenstlebes}
	If  the conditions of Proposition \ref{prlogmdensdoubrearr} are satisfied, then  the conditions of Theorem \ref{thcondlebesgue} are fulfilled.  
\end{proposition}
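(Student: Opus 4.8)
The plan is to show that the hypotheses of Proposition~\ref{prlogmdensdoubrearr} force, for each fixed small $r$, the Lebesgue-rearrangement condition \eqref{cndlebesgue} on cubes $Q_r(\by)$. The key observation is that the $(\log_m,\theta)$-dense system built into Proposition~\ref{prlogmdensdoubrearr} is precisely a mechanism for locating, inside an \emph{arbitrary} small cube $Q_r(\bz)$, a smaller cube $Q_{\theta r}(\bs)$ sitting inside one of the dyadic pieces $D_j(\vec l)$, and on such pieces the conclusion \eqref{cndlogdensdoubrearrprev} controls $V^\star$ from below. So the proof is an unwinding-of-definitions argument together with monotonicity properties of the non-increasing rearrangement $V^\star(\cdot;\,\cdot)$.

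First I would fix $r\in(0,r_0]$ with $r_0:=\min\{1,\frac{1}{\theta m^{2}}\}$ (shrinking $r_0$ if necessary so that \eqref{cndtildgam}-type conditions are still available), and take $\by\to\infty$. Write $\by$ as $\vec l + \bz'$ with $\vec l\in\Z^d$ and $Q_r(\by)\subseteq Q_1(\vec l)$ for $|\by|$ large (using $r\le 1$; if $Q_r(\by)$ straddles two unit cubes one replaces it by a slightly smaller interior cube, which only decreases $V^\star$ and is harmless). By property~(b) of Definition~\ref{dfdenslogmtetpart1} there is $j=j(\by)\in\{1,\dots,[\log_m(\tfrac{1}{\theta r})]\}$ and a regular parallelepiped $\Pi\subseteq D_j(\vec l)$ with a cube $Q_{\theta r}(\bs)\subseteq \Pi\cap Q_r(\by)$. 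Next I choose $n=n(r)$ large enough that $m^{-n}\le \theta r$ up to the $m$-adic grid, so that $Q_{\theta r}(\bs)$ contains some $m$-adic cube $Q(\vec\xi,n)$ with $\vec\xi\in\Xi_n(\vec l,j)$ — this uses \eqref{Xinonempty2} to guarantee such indices exist, and the fact that any cube of side comparable to $\theta r$ contains an $m$-adic subcube of level $n$ once $m^{-n}$ is a fixed fraction of $\theta r$. Since $Q(\vec\xi,n)\subseteq Q_r(\by)$, monotonicity of the non-increasing rearrangement in the domain (a larger domain gives a larger $V^\star$ at a comparable level, after adjusting the level parameter by the ratio of measures) yields
\begin{equation}\label{compstep}
V^\star\big(\gamma(r)\,\mathrm{mes}_d(Q_r(\by));\,Q_r(\by)\big)\ \ge\ V^\star\big(\psi(m,n);\,Q(\vec\xi,n)\big),
\end{equation}
provided $\gamma(r)\,\mathrm{mes}_d(Q_r(\by))\le \psi(m,n)=\gamma(m^{-n})\,\mathrm{mes}_d(Q(\vec 0,n))$; the latter inequality has to be arranged by the choice of $n$ and uses monotonicity of $\gamma$ together with $m^{-n}\le r$ and the scaling $\mathrm{mes}_d(Q(\vec 0,n))=(\theta r)^{d}\cdot(\text{const})$ versus $\mathrm{mes}_d(Q_r(\by))=(2r)^d$.

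Finally, since $\vec\xi\in\bigcup_{j=1}^{n}\Xi_n(\vec l,j)$ (here one needs $j(\by)\le n$, which holds because $j(\by)\le[\log_m(\tfrac{1}{\theta r})]$ and $n=n(r)$ was chosen at least that large), hypothesis \eqref{cndlogdensdoubrearrprev} gives $\lim_{|\vec l|\to\infty}V^\star(\psi(m,n);\,Q(\vec\xi,n))=\infty$ uniformly over the relevant $\vec\xi$; combined with \eqref{compstep} and the fact that $|\by|\to\infty$ forces $|\vec l|\to\infty$, this yields \eqref{cndlebesgue}, and Theorem~\ref{thcondlebesgue} then gives discreteness. \textbf{The main obstacle} I anticipate is the bookkeeping that makes \eqref{compstep} legitimate: one must pin down exactly how $n$ depends on $r$ (and on $m$, $\theta$) so that simultaneously (i) $Q_{\theta r}(\bs)$ contains an $m$-adic cube of level $n$, (ii) $j(\by)\le n$, and (iii) the level parameters satisfy $\gamma(r)\,\mathrm{mes}_d(Q_r(\by))\le\gamma(m^{-n})\,\mathrm{mes}_d(Q(\vec 0,n))$ — the last point being delicate because $\gamma$ is only assumed nondecreasing and one is comparing it at the two scales $r$ and $m^{-n}$, so the geometric loss $(\theta/(2m^{n}r))^d$ in the measure ratio must be absorbed. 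Getting a single $n=n(r)$ that works for all large $\by$ is what makes the limit in \eqref{cndlebesgue} come out, and this is the step I would write most carefully.
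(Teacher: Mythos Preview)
Your outline follows the same route as the paper (which packages the key localization step as Lemma~\ref{prestbarVstarbelownewest} from \cite{Zel1}), but the obstacle you flag at the end is not just delicate bookkeeping --- as written it is an actual gap, and it cannot be resolved by choosing $n$ alone.

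Here is the issue. With $n=n(r)$ chosen so that an $m$-adic cube of level $n$ fits inside $Q_{\theta r}(\bs)$, you necessarily have $m^{-n}\le\theta r<r$. Then $\psi(m,n)=\gamma(m^{-n})\,\mathrm{mes}_d(Q(\vec 0,n))=\gamma(m^{-n})(2m^{-n})^d$, while $\gamma(r)\,\mathrm{mes}_d(Q_r(\by))=\gamma(r)(2r)^d$. Since $\gamma$ is nondecreasing and $m^{-n}<r$, both factors on the $\psi$-side are \emph{smaller}, so the inequality $\gamma(r)(2r)^d\le\gamma(m^{-n})(2m^{-n})^d$ that you need for \eqref{compstep} is impossible. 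No clever choice of $n$ rescues this, because the measure ratio $(m^{-n}/r)^d$ and the $\gamma$-ratio $\gamma(m^{-n})/\gamma(r)$ both work against you.

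The fix --- and this is what the paper does --- is to remember that Theorem~\ref{thcondlebesgue} only requires \eqref{cndlebesgue} for \emph{some} function satisfying \eqref{cndgammar}, not for the original $\gamma$. One replaces $\gamma$ by $\bar\gamma(r):=2^{-d}K\,\theta^d\,\gamma\big(\theta r/(2m^2)\big)$ for a suitable constant $K\in(0,1)$. With the specific choice $n=\big[\log_m\big(\tfrac{2}{\theta r}\big)\big]+2$ one has $m^{-n}\ge\theta r/(2m^2)$, so $\gamma(m^{-n})\ge\gamma(\theta r/(2m^2))$, and now the comparison
\[
\bar\gamma(r)\,\mathrm{mes}_d(Q_r(\by))\ \le\ \gamma(m^{-n})\,\mathrm{mes}_d(Q(\vec 0,n))
\]
does go through. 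Since $\bar\gamma(r)=c\,\gamma(c'r)$ with fixed positive constants $c,c'$, it still satisfies \eqref{cndgammar}. The same rescaling trick handles your ``straddling'' step: passing from $Q_r(\by)$ to a subcube $Q_{r/2}(\tilde\by)\subseteq Q_r(\by)\cap Q_1(\vec l)$ costs another factor $2^{-d}$ in the level, which is already built into $\bar\gamma$. Once you make this replacement, your argument is complete and coincides with the paper's.
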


\begin{remark}
	Since conditions \eqref{cndtildgam} are stronger than conditions \eqref{cndgammar}, Proposition \ref{lmmeasinstcap1} (with $Q_r(\by)$ instead of $\mathcal{G}_r(\by$)) implies Theorem \ref{thcondlebesgue}. On the other hand, in \cite{Zel1} (Examples 5.4, 5.5) an example of the potential $V(\e)$ has been constructed, which satisfies conditions of 
	Proposition \ref{prlogmdensdoubrearr}, but conditions of Proposition \ref{lmmeasinstcap1} are not fulfilled for it. In view of Proposition \ref{threlprlogdenstlebes}, this means that Theorem \ref{thcondlebesgue} is essentially more general than Proposition \ref{lmmeasinstcap1}. 
\end{remark}

If $\Omega=\mathcal{G}_r(\by)$ (see \eqref{dfcalGry}. \eqref{formofcalG}), we shall
write briefly $M(\by,r)$ and $\mathrm{BP}(\by,r)$. For $\mu\in
M(\by,r)$ denote by
$\mathcal{M}_\gamma^\mu(\by,r)\;(\gamma\in(0,1))$ the collection
of all compact sets $F\subseteq\bar{\mathcal G}_r(\by)$ satisfying the
condition
\begin{equation}\label{defMmu} 
\mu(F)\le\gamma\,\mu(\mathcal{G}_r(\by)).
\end{equation}

In all next claims we suppose that $V\in L_{p,\,loc}(\R^d)$ with
$p>d/2$.
Denote by $\mathcal{F}$ the Fourier transform on $\R^d$ and
consider the {\it Bessel kernel} of the order $1$:
\begin{equation}\label{dfBesskern}
G_1(\e)=\mathcal{F}^{-1}\big((1+|\xi|^2)^{-1/2}\big)(\e)
\end{equation}
(\cite{AH}, p. 10).
\begin{theorem}\label{thusecore}
	Suppose that for some $r_0>0$ and any $r\in(0,r_0)$
	\begin{eqnarray}\label{cndusecore}
	&&\hskip-5mm\lim_{|\by|\rightarrow\infty}\;\sup_{\mu\,\in\,
		\mathrm{BP}(\by,r)}\;\inf_{F\in\mathcal{M}^\mu_{\gamma(r)}(\by,r)}\int_{\bar{\mathcal G}_r(\by)}\Big(\int_{\mathcal{G}_r(\by)\setminus
		F}G_1(\e-\bs)\sqrt{V(\bs)}\,\mathrm{d}\bs\Big)^2\mu( \mathrm{d}\e)\nonumber\\
	&&\hskip-5mm=\infty,
	\end{eqnarray}
	where $\gamma(r)$ satisfies the condition \eqref{cndgammar}.
	Then the spectrum of the
	operator $H=-\Delta+V(\e)$ is discrete.
\end{theorem}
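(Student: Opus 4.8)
The plan is to deduce Theorem \ref{thusecore} from the Mazya--Shubin criterion (Theorem \ref{thMazSh}) by estimating the cost functional $\mathcal{I}(F)=\int_{\mathcal{G}_r(\by)\setminus F}V(\e)\,\mathrm{d}\e$ from below, using the capacitary strong type inequality \eqref{strtypineq} of David Adams together with the base polyhedron $\mathrm{BP}(\by,r)$. First I would fix $r\in(0,r_0)$ and $\by$, and take an arbitrary compact $F\in\mathcal{N}_{\gamma(r)}(\by,r)$, so that $\mathrm{cap}(F)\le\gamma(r)\,\mathrm{cap}(\bar{\mathcal G}_r(\by))$. The key observation is that since $V\in L_{p,loc}$ with $p>d/2$, one can write $\sqrt{V}=g\star f$ locally in the sense of Bessel potentials: set $f:=G_1^{-1}\star(\sqrt{V}\,\chi_{\mathcal{G}_r(\by)})$ (the inverse Bessel operator $(I-\Delta)^{1/2}$ applied to $\sqrt V$ restricted to the domain), so that $\sqrt{V(\bs)}=(G_1\star f)(\bs)$ on $\mathcal{G}_r(\by)$ and $f\in L_2(\R^d)$ with $\|f\|_{L_2}^2\asymp\|\sqrt V\|_{H^1(\mathcal{G}_r(\by))}^2$, using $p>d/2$ to control the $H^1$ norm locally. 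Then $V(\bs)=\big((G_1\star f)(\bs)\big)^2$ on the domain, and \eqref{strtypineq} with $p=2$, $g=G_1$ gives
\begin{equation*}
\int_{\R^d}\big((G_1\star f)(\e)\big)^2\,C_{G_1,2}(\mathrm{d}\e)\le A\int_{\R^d}f(\e)^2\,\mathrm{d}\e.
\end{equation*}

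The second step is the comparison between the Choquet integral against $C_{G_1,2}$ and ordinary integration against any measure $\mu$ from the base polyhedron. Here I would use that $C_{G_1,2}$ is equivalent, up to multiplicative constants depending only on $d$ and the fixed geometry of $\mathcal{G}$, to the harmonic capacity $\mathrm{cap}$ on compact subsets of a fixed bounded region (this is standard: the Bessel capacity $C_{G_1,2}$ and the Riesz/Newtonian capacity are comparable on bounded sets in dimension $d\ge 3$). Since $\mu\in\mathrm{BP}(\by,r)$ satisfies $\mu(A)\le\mathrm{cap}(A)$ for all Borel $A$, a standard layer-cake argument (compare $\int F\,\mathrm{d}\mu=\int_0^\infty\mu(\{F\ge t\})\,dt$ with the Choquet integral $\int_0^\infty C_{G_1,2}(\{F\ge t\})\,dt$, term by term) yields, for the nonnegative function $F(\e)=\big(\int_{\mathcal{G}_r(\by)\setminus F}G_1(\e-\bs)\sqrt{V(\bs)}\,\mathrm{d}\bs\big)^2$ — note I am overloading the letter $F$, so in the write-up I would rename the compact set $K$ — that
\begin{equation*}
\int_{\bar{\mathcal G}_r(\by)}\Big(\int_{\mathcal{G}_r(\by)\setminus K}G_1(\e-\bs)\sqrt{V(\bs)}\,\mathrm{d}\bs\Big)^2\mu(\mathrm{d}\e)\le c_d\int_{\R^d}\big((G_1\star f_K)(\e)\big)^2\,C_{G_1,2}(\mathrm{d}\e)\le c_d A\int_{\mathcal{G}_r(\by)\setminus K}V(\e)\,\mathrm{d}\e,
\end{equation*}
where $f_K=G_1^{-1}\star(\sqrt V\,\chi_{\mathcal{G}_r(\by)\setminus K})$ and the last inequality uses \eqref{strtypineq} again on the reduced domain $\mathcal{G}_r(\by)\setminus K$. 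Rearranging, $\int_{\mathcal{G}_r(\by)\setminus K}V\,\mathrm{d}\e\ge(c_dA)^{-1}\int_{\bar{\mathcal G}_r(\by)}(\int_{\mathcal{G}_r(\by)\setminus K}G_1(\e-\bs)\sqrt{V(\bs)}\,\mathrm{d}\bs)^2\mu(\mathrm{d}\e)$.

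The third step is bookkeeping on the constraint sets. Given $K\in\mathcal{N}_{\gamma(r)}(\by,r)$, i.e. $\mathrm{cap}(K)\le\gamma(r)\mathrm{cap}(\bar{\mathcal G}_r(\by))$, and $\mu\in\mathrm{BP}(\by,r)$, we get $\mu(K)\le\mathrm{cap}(K)\le\gamma(r)\mathrm{cap}(\bar{\mathcal G}_r(\by))=\gamma(r)\mu(\bar{\mathcal G}_r(\by))$, so $K\in\mathcal{M}^\mu_{\gamma(r)}(\by,r)$; this is precisely the monotonicity property built into the definition \eqref{defcore} of the base polyhedron. Hence
\begin{equation*}
\inf_{K\in\mathcal{N}_{\gamma(r)}(\by,r)}\int_{\mathcal{G}_r(\by)\setminus K}V\,\mathrm{d}\e\ge(c_dA)^{-1}\sup_{\mu\in\mathrm{BP}(\by,r)}\inf_{K\in\mathcal{M}^\mu_{\gamma(r)}(\by,r)}\int_{\bar{\mathcal G}_r(\by)}\Big(\int_{\mathcal{G}_r(\by)\setminus K}G_1(\e-\bs)\sqrt{V(\bs)}\,\mathrm{d}\bs\Big)^2\mu(\mathrm{d}\e),
\end{equation*}
because for a fixed $\mu$ the infimum over the larger class $\mathcal{M}^\mu_{\gamma(r)}$ is $\le$ the infimum over $\mathcal{N}_{\gamma(r)}$, and we may then take the supremum over $\mu$ on the right and the infimum over $K$ on the left. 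Therefore, if the right-hand side tends to $\infty$ as $|\by|\to\infty$ — which is exactly hypothesis \eqref{cndusecore} — then $\lim_{|\by|\to\infty}\inf_{K\in\mathcal{N}_{\gamma(r)}(\by,r)}\int_{\mathcal{G}_r(\by)\setminus K}V\,\mathrm{d}\e=\infty$ for every $r\in(0,r_0)$, and since $\gamma(r)$ satisfies \eqref{cndgammar}, Theorem \ref{thMazSh} gives discreteness of the spectrum of $H$.

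\textbf{Main obstacle.} The delicate point is Step 2: making rigorous the passage from $V$ on the domain to $\big(G_1\star f\big)^2$ with an $L_2$ function $f$ whose norm is controlled (locally, with a constant uniform in $\by$ for $r$ in a fixed range), and the comparison of the Bessel capacity $C_{G_1,2}$ with the harmonic capacity $\mathrm{cap}$ on the relevant bounded sets with constants independent of $\by$ and of $r\in(0,r_0)$. The condition $V\in L_{p,loc}$, $p>d/2$, is what should make the factorization work via Sobolev embedding, but one must be careful that cutting off to $\mathcal{G}_r(\by)\setminus K$ does not destroy the $H^1$-control — this is where one may need to invoke a bound on $\|\sqrt V\,\chi_{\mathcal{G}_r(\by)}\|$ in the appropriate potential space rather than a naive restriction, and possibly the scaling estimates for $G_1\star G_1$ established in Appendix \ref{sec:B}. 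A clean way around part of this is to phrase the argument entirely in terms of $C_{G_1,2}$ from the start (replacing $\mathrm{cap}$ by $C_{G_1,2}$ in the definition of $\mathrm{BP}$, or absorbing the equivalence constant into $\gamma(r)$, which is harmless since \eqref{cndgammar} is a limsup condition), so that \eqref{strtypineq} applies without a comparison step; then the only remaining issue is the uniform local factorization of $V$, which I expect to handle with a fixed extension/cutoff operator on the reference domain $\mathcal G$ followed by rescaling.
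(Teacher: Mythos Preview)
The main gap is your factorization $\sqrt V=G_1\star f$ via the inverse Bessel operator. This would require $\sqrt V\,\chi_{\mathcal{G}_r(\by)}\in H^1(\R^d)$, but the hypothesis $V\in L_{p,loc}$ with $p>d/2$ only gives $\sqrt V\in L_{2p,loc}$ and carries no derivative information whatsoever (take $V$ bounded and piecewise constant, or in $d=3$ take $V(\e)=|\e|^{-\alpha}$ near the origin with $1<\alpha<2$). Even granting the factorization, your chain does not close: with $f_K=G_1^{-1}\star(\sqrt V\,\chi)$ one has $G_1\star f_K=\sqrt V\,\chi$, so \eqref{strtypineq} yields $\int(\sqrt V\,\chi)^2\,C_{G_1,2}(\mathrm{d}\e)\le A\|f_K\|_{L_2}^2=A\|\sqrt V\,\chi\|_{H^1}^2$, not $A\int V\chi$; and the left side here is $\int V\chi\,C_{G_1,2}(\mathrm{d}\e)$, which is not the quantity $\int\big(\int G_1\sqrt V\big)^2\mu(\mathrm{d}\e)$ appearing in \eqref{cndusecore}.

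The fix --- and this is what the paper does --- is far simpler than your attempt: take $f(\e)=\sqrt{V(\e)}\,\chi_{\mathcal{G}_r(\by)\setminus F}(\e)$ \emph{itself} as the input to \eqref{strtypineq}. Then $\int f^2=\int_{\mathcal{G}_r(\by)\setminus F}V$, while $(G_1\star f)(\e)=\int_{\mathcal{G}_r(\by)\setminus F}G_1(\e-\bs)\sqrt{V(\bs)}\,\mathrm{d}\bs$ is precisely the inner integral in \eqref{cndusecore}, so \eqref{strtypineq} reads directly
\begin{equation*}
\int_{\R^d}\Big(\int_{\mathcal{G}_r(\by)\setminus F}G_1(\e-\bs)\sqrt{V(\bs)}\,\mathrm{d}\bs\Big)^{2}C_{G_1,2}(\mathrm{d}\e)\le A\int_{\mathcal{G}_r(\by)\setminus F}V(\e)\,\mathrm{d}\e.
\end{equation*}
No inverse operator, no $H^1$ control. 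The hypothesis $p>d/2$ enters only through Lemma \ref{lmC12eqWien}(ii), to guarantee that $G_1\star f$ is continuous and bounded on $\bar{\mathcal G}_r(\by)$, so that one may restrict the Choquet integral to $\bar{\mathcal G}_r(\by)$, pass from $C_{G_1,2}$ to $\mathrm{cap}$ via $C_{G_1,2}\ge C\,\mathrm{cap}$ (Lemma \ref{lmC12eqWien}(i)), and then invoke the representation $\int\cdot\,\mathrm{cap}(\mathrm{d}\e)=\max_{\mu\in\mathrm{BP}}\int\cdot\,\mu(\mathrm{d}\e)$ of Proposition \ref{prcore}. Your Steps 1 and 3 --- the Mazya--Shubin reduction, the inclusion $\mathcal{N}_{\gamma(r)}(\by,r)\subseteq\mathcal{M}^\mu_{\gamma(r)}(\by,r)$ from \eqref{defcore}, and the elementary inf--sup swap --- are correct and match the paper.
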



Before formulation of the following theorem, based on Theorem \ref{thusecore}, let us introduce a  
some notations. 

Let $(X,\,\Sigma,\,\mu)$, $(Y,\,\Xi,\,\nu)$ be two measure spaces with  non-negative measures 
$\mu$, $\nu$ and $F:\,X\times Y\rightarrow\R$ be a nonnegative $\mu\times\nu$-measurable function, belonging to $L_1(X\times Y,\,\mu\times\nu)$.  
In Section \ref{sec:C} we have defined the repeated nonincreasing rearrangement $(F^\star)^\star(t,u;\,X,Y;\,\mu,\nu)\,(t>0,u>0)$ of the function $F(\e,\bs)$ and proved its existence (Definition \ref{dfreprearr} and Proposition \ref{prexistpartrear}).
If $X=Y$, $\Sigma=\Xi$ and $\mu=\nu$, we shall write $(F^\star)^\star(t,u;\,X;\,\mu)$ and if in addition $t=u$, we shall write $(F^\star)^\star(t;\,X;\,\mu)$. If in the last case $X=\mathcal{G}_r(\by)$, we shall write $F^\star(\cdot,\bs)(t;\, \by,r;\,\nu)$, $(F^\star)^\star(,t;\,\by,r;\,\mu)$ and $(F^\star)^\star(t;\,\by,r;\,\mu)$ and if in addition
$\mu=\mathrm{mes}_d$, we shall omit $\mathrm{mes}_d$ in the above notations.	

Denote for $\mu\,\in\,\mathrm{BP}_{eq}(\by,r)$:
\begin{equation}\label{dfWbsbt}
X_\mu(\bs,\bt)=\sqrt{V(\bs}\sqrt{V(\bt)}\alpha_\mu(\bs)\alpha_\mu(\bt)K_\mu(\bs,\bt),
\end{equation}
where
\begin{equation}\label{dfKmubsbt}
K_\mu(\bs,\bt)=\int_{\bar{\mathcal G}_r(\by)}G_1(\e-\bs)G_1(\e-\bt)\,\mu(\mathrm{d}\e).
\end{equation}

Since each measure $\mu\in\mathrm{BP}_{eq}(\by,r)$ is equivalent to the Lebesgue measure
$\mathrm{mes}_d$, the Lebesgue completion of $\Sigma_B(\mathcal{G}r(\by))$ by $\mu$ coincides 
with \\$\Sigma_L(\mathcal{G}r(\by))$. Hence we can consider the complete measure space
\begin{equation*}
\big(\mathcal{G}r(\by),\,\Sigma_L(\mathcal{G}r(\by)),\,\mu\big).
\end{equation*} 

\begin{theorem}\label{thdoublrear}
	Suppose that the condition
	\begin{eqnarray}\label{cnddoublrear}
	\hskip-5mm\lim_{|\by|\rightarrow\infty}\;\sup_{\mu\,\in\,
		\mathrm{BP}_{eq}(\by,r)}\;(X_\mu^\star)^\star(\psi(r);\,\by,r;\,\mu)=\infty
	\end{eqnarray}
	is fulfilled for some $r_0>0$ and  any $r\in(0,r_0]$, where $\psi_\mu(r)=\gamma(r))\mu(\mathcal{G}_r(0))$ and $\gamma(r)$ satisfies
	conditions \eqref{cndgammar}. Then the spectrum of the operator
	$H=-\Delta+V(\e)$ is discrete.
\end{theorem}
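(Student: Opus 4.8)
The plan is to deduce Theorem \ref{thdoublrear} from Theorem \ref{thusecore} by estimating, for each fixed $\mu\in\mathrm{BP}_{eq}(\by,r)$, the inner infimum
\[
\inf_{F\in\mathcal{M}^\mu_{\gamma(r)}(\by,r)}\int_{\bar{\mathcal G}_r(\by)}\Big(\int_{\mathcal{G}_r(\by)\setminus F}G_1(\e-\bs)\sqrt{V(\bs)}\,\mathrm{d}\bs\Big)^2\mu(\mathrm{d}\e)
\]
from below in terms of the repeated rearrangement of $X_\mu$. First I would open the square and use Fubini (legitimate since $V\in L_{p,loc}$, $p>d/2$, and $G_1$ is locally in the appropriate Lebesgue class, so the double integral converges) to rewrite the $\mu$-integral of the square as
\[
\int_{\mathcal{G}_r(\by)\setminus F}\int_{\mathcal{G}_r(\by)\setminus F}\sqrt{V(\bs)}\sqrt{V(\bt)}\,K_\mu(\bs,\bt)\,\mathrm{d}\bs\,\mathrm{d}\bt,
\]
with $K_\mu$ as in \eqref{dfKmubsbt}. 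Converting the $\mathrm{mes}_d$-integrals into $\mu$-integrals introduces the densities $\alpha_\mu(\bs)$, $\alpha_\mu(\bt)$ (here $\alpha_\mu=\mathrm{d}\,\mathrm{mes}_d/\mathrm{d}\mu$ as in \eqref{dfalphamu}), producing exactly the integrand $X_\mu(\bs,\bt)$ of \eqref{dfWbsbt}, so the quantity becomes $\int_{E\times E}X_\mu(\bs,\bt)\,\mu(\mathrm{d}\bs)\,\mu(\mathrm{d}\bt)$ where $E=\mathcal{G}_r(\by)\setminus F$. The constraint $F\in\mathcal{M}^\mu_{\gamma(r)}(\by,r)$, i.e. $\mu(F)\le\gamma(r)\mu(\mathcal{G}_r(\by))$, translates into $\mu(E)\ge(1-\gamma(r))\mu(\mathcal{G}_r(\by))$; note $\mu(\mathcal{G}_r(\by))=\mathrm{cap}(\bar{\mathcal G}_r(0))$ is independent of $\by$ since $\mu\in\mathrm{BP}(\by,r)$.

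Next I would invoke the machinery for Problem \ref{extrprobl} applied to the product measure space $(X\times X,\,\mu\times\mu)$ with the function $X_\mu(\bs,\bt)$ and the admissible sets being product sets $E\times E$. The issue is that minimizing $\int_{E\times E}X_\mu$ over $E\times E$ with $\mu(E)$ bounded below is not literally Problem \ref{extrprobl} (which minimizes over \emph{arbitrary} measurable subsets of the product); however, the lower bound from Proposition \ref{lmJR} applied on the product space gives $J_{X_\mu}\big((\mu\times\mu)(X\times X)-\tau/\theta,\ldots\big)\ge\frac{(\theta-1)\tau}{\theta}(X_\mu)^\star(\tau;\,X\times X;\,\mu\times\mu)$, and a product set $E\times E$ with $\mu(E)\ge(1-\gamma)\mu(X)$ has $(\mu\times\mu)((X\times X)\setminus(E\times E))\le(2\gamma-\gamma^2)(\mu\times\mu)(X\times X)$, so choosing $\tau$ and $\theta$ appropriately (with $\gamma=\gamma(r)$ small enough that $2\gamma-\gamma^2<1/\theta$ works, using $\gamma(r)\to 0$ allowed under \eqref{cndgammar}) bounds the infimum below by a constant times $(X_\mu)^\star(\tau;\,X\times X;\,\mu\times\mu)$. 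Finally I would pass from the one-parameter product rearrangement to the repeated rearrangement $(X_\mu^\star)^\star$ via the comparison established in Section \ref{sec:C} (the existence result Proposition \ref{prexistpartrear} together with the standard inequality relating a function's distribution on a product space to iterated distributions), which shows $(X_\mu)^\star(\tau;\,X\times X;\,\mu\times\mu)$ dominates $(X_\mu^\star)^\star(\psi(r);\,\by,r;\,\mu)$ for a suitable choice of $\psi(r)=\gamma(r)\mu(\mathcal{G}_r(0))$ matching the hypothesis \eqref{cnddoublrear}. Taking the supremum over $\mu\in\mathrm{BP}_{eq}(\by,r)\subseteq\mathrm{BP}(\by,r)$ and letting $|\by|\to\infty$, the hypothesis \eqref{cnddoublrear} forces the left side of \eqref{cndusecore} to be infinite, so Theorem \ref{thusecore} gives discreteness of the spectrum.

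The main obstacle I anticipate is the passage between the genuine product-space rearrangement $(X_\mu)^\star$ on $(X\times X,\mu\times\mu)$ and the \emph{repeated} rearrangement $(X_\mu^\star)^\star$: these are different objects, and one needs the precise one-sided inequality — namely that rearranging first in $\bt$ and then in $\bs$ produces a function whose super-level sets have $\mu\times\mu$-measure no larger than those of $X_\mu$ itself at a comparable level — which is exactly what must be harvested from the Riesz-space arguments of Appendix \ref{sec:C}. A secondary technical point is justifying Fubini and the density substitution uniformly in $\mu$, which rests on the equivalence of every $\mu\in\mathrm{BP}_{eq}$ with $\mathrm{mes}_d$ (so that $\Sigma_L$ is the common completion) together with local integrability of $G_1\star\sqrt V$; and one must check that the bookkeeping of the fraction of "removed" product mass ($2\gamma-\gamma^2$ versus $\gamma$) does not destroy the scaling condition \eqref{cndgammar}, which it does not because that condition only constrains $\limsup_{r\downarrow 0}r^{-2}\gamma(r)$ and is stable under replacing $\gamma(r)$ by a fixed multiple.
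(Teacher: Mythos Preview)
Your opening is right and matches the paper: reduce to Theorem \ref{thusecore}, open the square, apply Fubini to get $\int_{E\times E}X_\mu(\bs,\bt)\,\mu(\mathrm{d}\bs)\,\mu(\mathrm{d}\bt)$ with $E=\mathcal{G}_r(\by)\setminus F$, and translate the constraint on $F$ into $\mu(E)\ge(1-\gamma(r))\mu(\mathcal{G}_r(\by))$. The departure comes at the next step, and there it contains a genuine gap.

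You pass from the infimum over product sets $E\times E$ to the infimum over \emph{all} measurable subsets of the product space, then invoke Proposition \ref{lmJR} on $(\mathcal{G}_r(\by)\times\mathcal{G}_r(\by),\mu\times\mu)$ to get a lower bound by the single product-space rearrangement $(X_\mu)^\star(\tau;\,\mathcal{G}_r(\by)\times\mathcal{G}_r(\by);\,\mu\times\mu)$ at some level $\tau\sim\gamma(r)\,\mu(\mathcal{G}_r(\by))^2$. You then appeal to Appendix \ref{sec:C} for a comparison with the repeated rearrangement. Two problems. First, Appendix \ref{sec:C} contains only the \emph{existence} of the repeated rearrangement (Proposition \ref{prexistpartrear} and Lemma \ref{lmpontwisesupinf}); it has no inequality relating $(X_\mu)^\star$ on the product to $(X_\mu^\star)^\star$. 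The inequality you need, $(X_\mu)^\star(tu;\,\mu\times\mu)\ge(X_\mu^\star)^\star(t,u;\,\mu)$, does hold (it is a one-line Chebyshev/Fubini computation), but it is not in the paper and you do not supply it. Second, and more serious, even granting that inequality your parameters do not close. With $\tau\sim\gamma\,\mu(X)^2$ you are forced to take $t=u\sim\sqrt{\gamma}\,\mu(X)$, so you control $(X_\mu^\star)^\star(\sqrt{\gamma}\,\mu(X))$, not $(X_\mu^\star)^\star(\gamma\,\mu(X))$ as in the hypothesis \eqref{cnddoublrear}. Equivalently, to match the hypothesis you must feed $\gamma_1(r)\sim\gamma(r)^2$ into Theorem \ref{thusecore}; but $\gamma^2$ need not satisfy \eqref{cndgammar} when $\gamma$ does (take $\gamma(r)=r$: then $r^{-2}\gamma(r)=r^{-1}\to\infty$ while $r^{-2}\gamma(r)^2=1$). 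So the product-space route, as you describe it, loses exactly one power of $\gamma$ too many.

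The paper avoids this by never going to the product-space rearrangement. Instead it relaxes $E\times E$ to $E\times G$ with \emph{independent} $E,G\in\mathcal{E}(\sigma(r),\mathcal{G}_r(\by),\mu)$, writes
\[
\inf_{E}\int_{E\times E}X_\mu\,d(\mu\times\mu)\ \ge\ \inf_{E}\inf_{G}\int_{E}\Big(\int_{G}X_\mu(\bs,\bt)\,\mu(\mathrm{d}\bs)\Big)\mu(\mathrm{d}\bt),
\]
uses Lemma \ref{lmpontwisesupinf} to bring the inner $\inf_G$ inside the outer $\mu$-integral as a lattice infimum, and then applies Proposition \ref{prsolextrprob} together with the estimate \eqref{estJR1} \emph{twice}: once to the inner integral (producing $X_\mu^\star(\cdot,\bt)(\psi_\mu(r))$) and once to the outer integral (producing $(X_\mu^\star)^\star(\psi_\mu(r))$). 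Each application stays at the scale $\psi_\mu(r)=\gamma(r)\mu(\mathcal{G}_r(0))$, so after choosing $\tilde\gamma(r)=\gamma(r)/\theta$ in Theorem \ref{thusecore} (which still satisfies \eqref{cndgammar}) the hypothesis \eqref{cnddoublrear} is invoked at exactly the right level. This decoupling, not a product-to-iterated rearrangement comparison, is the mechanism that makes the argument go through.
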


The following consequence of the previous theorem is valid:

\begin{corollary}\label{crdoublrear}
	If in the formulation of Theorem \ref{thdoublrear} $\;\mathcal{G}_r(\by)=B_r(\by)$ and instead of condition \eqref{cnddoublrear} the condition
	\begin{eqnarray}\label{cnddoublrear1}
	\hskip-5mm\lim_{|\by|\rightarrow\infty}\;\sup_{\mu\,\in\,
		M_f(\by,r)}\;(X_\mu^\star)^\star(\psi(r);\,\by,r;\,\mu)=\infty
	\end{eqnarray}
	is satisfied,  then the spectrum of the operator
	$H=-\Delta+V(\e)$ is discrete.
\end{corollary}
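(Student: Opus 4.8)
The plan is to deduce Corollary \ref{crdoublrear} from Theorem \ref{thdoublrear} by restricting the supremum in condition \eqref{cnddoublrear} to a smaller family of measures, namely $M_f(\by,r)$, and checking that this family is admissible. First I would invoke Proposition \ref{lmdescribedens}, which asserts that for every $\by$ and $r$ the set $M_f(\by,r)$ is contained in $\mathrm{BP}_{eq}(\by,r)$. Since $\mathcal{G}_r(\by)=B_r(\by)$ in this setting, the space $\big(B_r(\by),\,\Sigma_L(B_r(\by)),\,\mu\big)$ is a legitimate complete measure space for each $\mu\in M_f(\by,r)$, exactly as discussed before the statement of Theorem \ref{thdoublrear}, so the quantities $X_\mu(\bs,\bt)$, $K_\mu(\bs,\bt)$ and the repeated rearrangement $(X_\mu^\star)^\star(\psi(r);\,\by,r;\,\mu)$ are all well defined for $\mu\in M_f(\by,r)$.

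Next I would observe the elementary monotonicity of suprema: because $M_f(\by,r)\subseteq\mathrm{BP}_{eq}(\by,r)$, for every fixed $\by$ and $r$ we have
\begin{equation*}
\sup_{\mu\,\in\,M_f(\by,r)}(X_\mu^\star)^\star(\psi(r);\,\by,r;\,\mu)\le\sup_{\mu\,\in\,\mathrm{BP}_{eq}(\by,r)}(X_\mu^\star)^\star(\psi(r);\,\by,r;\,\mu).
\end{equation*}
Consequently, if the limit of the left-hand side as $|\by|\to\infty$ is $+\infty$ for each $r\in(0,r_0]$ — which is precisely hypothesis \eqref{cnddoublrear1} — then the limit of the right-hand side is also $+\infty$ for each such $r$, i.e.\ condition \eqref{cnddoublrear} of Theorem \ref{thdoublrear} holds with the same $r_0$ and the same $\gamma(r)$ (note $\psi_\mu(r)=\gamma(r)\mu(\mathcal{G}_r(0))$ depends only on $r$ through $\gamma$ and the normalization, and $\gamma(r)$ still satisfies \eqref{cndgammar}). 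Applying Theorem \ref{thdoublrear} then yields discreteness of the spectrum of $H=-\Delta+V(\e)$.

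There is essentially no serious obstacle here; the corollary is a straightforward specialization. The only point that requires a word of care is to make sure that replacing $\mathrm{BP}_{eq}(\by,r)$ by $M_f(\by,r)$ does not change the meaning of any of the auxiliary objects: the density $\alpha_\mu=\mathrm{d}\,\mathrm{mes}_d/\mathrm{d}\,\mu$ entering \eqref{dfWbsbt} exists because measures in $M_f(\by,r)$ are absolutely continuous with densities bounded away from $0$ and $\infty$ on $B_r(\by)$ (being convex combinations, in the $L_1$-closure, of functions $f'\circ s$ with $f(t)=t^{(d-2)/d}$), so $\mu$ is genuinely equivalent to $\mathrm{mes}_d$ and the quantities $X_\mu$, $K_\mu$ are finite $\mu\times\mu$-a.e.; and the existence of the repeated rearrangement follows from Proposition \ref{prexistpartrear} applied to the finite measure space $\big(B_r(\by),\,\Sigma_L(B_r(\by)),\,\mu\big)$. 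With these remarks in place the argument is complete.
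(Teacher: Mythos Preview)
Your proposal is correct and follows exactly the paper's own proof: invoke the inclusion $M_f(\by,r)\subseteq\mathrm{BP}_{eq}(\by,r)$ from Proposition~\ref{lmdescribedens}, use monotonicity of the supremum to pass from \eqref{cnddoublrear1} to \eqref{cnddoublrear}, and apply Theorem~\ref{thdoublrear}. The extra well-definedness remarks you add are harmless but not needed for the argument.
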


Recall that the function $Y(\e,\bt)$ is defined by \eqref{dfYmust}.
The following theorem is based on the previous claim:

\begin{theorem}\label{thdoubrearrleb}
	Suppose that the condition
\begin{equation}\label{cnddoubrearrleb}
\lim_{|\by|\rightarrow\infty} (Y^\star)^\star\big(\psi(r);\,Q_r(\by)\big)==\infty
\end{equation}
is fulfilled for some $r_0>0$ and  any $r\in(0,r_0]$, where
\begin{equation*}
 \psi_\mu(r)=\gamma(r))\mathrm{mes}_d(Q_r(\by)) .
\end{equation*}
and $\gamma(r)$ satisfies
conditions \eqref{cndgammar}. Then the spectrum of the operator
$H=-\Delta+V(\e)$ is discrete.	
\end{theorem}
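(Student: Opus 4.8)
The plan is to deduce Theorem~\ref{thdoubrearrleb} from Corollary~\ref{crdoublrear} by specializing to balls and then comparing the repeated rearrangement of the ``weighted kernel'' $X_\mu(\bs,\bt)$ against that of the pure potential kernel $Y(\e,\bt)=\sqrt{V(\e)}\,|\e-\bt|^{-(d-2)}\sqrt{V(\bt)}$. First I would fix $r\in(0,r_0]$ and $\by$, and for the given cube $Q_r(\by)$ pick an inscribed ball $B_{\rho r}(\by)$ (with $\rho>0$ an absolute constant, $\rho<1$) and a circumscribed ball; since the Mazya--Shubin domains $\mathcal{G}_r(\by)$ in Corollary~\ref{crdoublrear} may be taken to be balls, I can work with $B_{\rho r}(\by)$, absorbing the fixed factor $\rho$ into the function $\gamma$ (note $\gamma(r)$ and $\gamma(\rho r)$ satisfy \eqref{cndgammar} simultaneously, since the $\limsup$ condition is scale-robust). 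So it suffices to produce, for a single well-chosen measure $\mu\in M_f(\by,r)$, the lower bound
\begin{equation*}
(X_\mu^\star)^\star(\psi_\mu(r);\,\by,r;\,\mu)\ \gtrsim\ (Y^\star)^\star\big(c\,\gamma(r)\,\mathrm{mes}_d(Q_r(\by));\,Q_r(\by)\big)
\end{equation*}
for some absolute constant $c>0$, uniformly in $\by$; the hypothesis \eqref{cnddoubrearrleb} then forces the left side to $\infty$ as $|\by|\to\infty$, and Corollary~\ref{crdoublrear} finishes.

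The natural choice is $\mu=\mathrm{mes}_d$ restricted and normalized to $\bar B_r(\by)$, i.e.\ the measure in $M_f(\by,r)$ with \emph{constant} density; this corresponds to the constant function $s$ in $\mathcal{S}(\bar B_r(\by),m_{d,r})$, whose image $f'\circ s$ is constant, so it lies in $\mathcal{Co}(\by,r)$. For this $\mu$ the Radon--Nikodym factor $\alpha_\mu(\e)$ is a constant (depending on $r$ only through a fixed power of $r$), hence $X_\mu(\bs,\bt)=c_{d,r}\sqrt{V(\bs)}\sqrt{V(\bt)}\,K_\mu(\bs,\bt)$ with $c_{d,r}$ explicit. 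The core estimate I need is a two-sided bound, with constants independent of $r\in(0,r_0)$, for
\begin{equation*}
K_\mu(\bs,\bt)=\int_{\bar B_r(\by)}G_1(\e-\bs)G_1(\e-\bt)\,\mathrm{d}\e
\end{equation*}
in terms of $|\bs-\bt|^{-(d-2)}$ (after rescaling out the $r$-dependence); this is precisely what Section~\ref{sec:B} supplies (the estimates \eqref{estXfrombelow} and its companion upper bound, whose uniformity in $r$ is singled out in the Introduction as the delicate point). Combining, $X_\mu(\bs,\bt)\asymp \kappa_{d,r}\,Y(\bs,\bt)$ on $B_r(\by)\times B_r(\by)$ with $\kappa_{d,r}$ a fixed power of $r$ times an absolute constant, and since the condition \eqref{cndgammar} on $\gamma$ is insensitive to multiplying $\gamma$ by such a fixed power of $r$, the $\kappa_{d,r}$ is harmless.

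Next I would transfer the asymptotic pointwise comparison $X_\mu\asymp \kappa_{d,r} Y$ to a comparison of their \emph{repeated nonincreasing rearrangements}. Here I use that the nonincreasing rearrangement is monotone under pointwise domination of functions and is insensitive to the ambient domain up to a change of the level parameter by the ratio of total measures: restricting from $Q_r(\by)$ to the inscribed ball $B_{\rho r}(\by)$ decreases the domain measure by a fixed factor, so $(Y^\star)^\star$ on the ball dominates $(Y^\star)^\star$ on the cube evaluated at a correspondingly rescaled parameter (this is the standard ``shrinking the domain lowers the rearrangement by at most a parameter rescaling'' fact; for the \emph{repeated} rearrangement it follows by applying the one-variable statement in each slot, using Definition~\ref{dfreprearr}). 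Then the outer scalar $\kappa_{d,r}$ pulls through the rearrangement, and the parameter $\psi_\mu(r)=\gamma(r)\mu(\mathcal{G}_r(0))$ is, up to a fixed constant, $c\,\gamma(r)\,\mathrm{mes}_d(Q_r(\by))$ after renormalization of $\mu$. Finally I would invoke \eqref{cnddoubrearrleb} to conclude \eqref{cnddoublrear1}.

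The main obstacle I anticipate is the bookkeeping of constants and of the parameter in the rearrangement so that everything is genuinely \emph{uniform in $r\in(0,r_0)$ and in $\by$}: one must check that (i) the kernel estimates from Section~\ref{sec:B} give constants free of $r$, (ii) passing between the cube $Q_r(\by)$ and the inscribed/circumscribed balls changes the rearrangement parameter only by a fixed multiplicative factor and the values only by a fixed factor, and (iii) the resulting rescaled $\gamma$ still obeys \eqref{cndgammar}. A secondary technical point is that $X_\mu$ involves $\alpha_\mu$ on \emph{both} variables, so one wants $\alpha_\mu$ constant (hence the choice of the uniform-density $\mu$); if instead a non-constant element of $M_f(\by,r)$ were forced, one would need lower bounds on $\alpha_\mu$, which are not available in general — so committing early to the constant-density measure is the key structural decision of the proof.
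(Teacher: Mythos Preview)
Your overall strategy---pick a single convenient $\mu$, use the kernel estimate from Section~\ref{sec:B} to compare $X_\mu$ with $Y$ pointwise, then push this through the repeated rearrangement---is sound and in fact somewhat cleaner than the paper's route. But two concrete steps, as written, are wrong.

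\textbf{(a) The constant-density measure is not obtained from a constant $s$.} A constant map $s\equiv c$ is never measure-preserving in the sense of Definition~\ref{dfmeaspres}: $s^{-1}(B)$ is either all of $\bar B_r(\by)$ or empty, so $m_{d,r}(s^{-1}(B))\in\{0,1\}$ while $\lambda(B)$ ranges over $[0,1]$. Thus your justification for ``scaled Lebesgue $\in M_f(\by,r)$'' fails. You can repair this in two ways. Either bypass $M_f$ entirely and invoke Theorem~\ref{thdoublrear} (not Corollary~\ref{crdoublrear}): the measure $\mu=\dfrac{\mathrm{cap}(\bar B_r(0))}{\mathrm{mes}_d(B_r(0))}\,\mathrm{mes}_d$ lies in $\mathrm{BP}_{eq}(\by,r)$ directly by the isocapacitary inequality $\mathrm{cap}(A)\ge\mathrm{cap}(\bar B_r(0))\big(\mathrm{mes}_d(A)/\mathrm{mes}_d(B_r(0))\big)^{(d-2)/d}$ together with $x\le x^{(d-2)/d}$ for $x\in[0,1]$. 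Or, if you insist on $M_f$, you must actually prove that the constant $1$ is in $\overline{\mathrm{co}}\{f'\circ s:s\in\mathcal{S}\}$, e.g.\ via a majorization argument ($\int_0^t 1\,du=t\le t^{(d-2)/d}=\int_0^t f'(u)\,du$ plus $\int_0^1 f'=1$); this is not free.

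\textbf{(b) The ball/cube comparison is stated backwards.} You write that ``restricting from $Q_r(\by)$ to the inscribed ball $B_{\rho r}(\by)$ \ldots\ $(Y^\star)^\star$ on the ball dominates $(Y^\star)^\star$ on the cube at a rescaled parameter.'' That is false: Lemma~\ref{lminclrear2} gives $W^\star(t;\Omega_1)\le W^\star(t;\Omega_2)$ whenever $\Omega_1\subset\Omega_2$, and no rescaling of $t$ can reverse this (take $Y$ supported on $(Q_r\setminus B_{\rho r})^2$). The correct move is the opposite inclusion: start from the ball $B_r(\by)$ in Corollary~\ref{crdoublrear}/Theorem~\ref{thdoublrear}, inscribe a cube $Q_{r/\sqrt d}(\by)\subset B_r(\by)$, and use Lemma~\ref{lminclrear2} twice (once in each variable, via Lemma~\ref{lmcomparrear}(ii)) to get $(Y^\star)^\star(t;B_r)\ge (Y^\star)^\star(t;Q_{r/\sqrt d})$. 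Then choose $\tilde\gamma(r)=C\,\gamma(r/\sqrt d)$ with the fixed constant $C=\mathrm{mes}_d(Q_{r/\sqrt d})/\mathrm{mes}_d(B_r)$ so that the parameters match; $\tilde\gamma$ still satisfies \eqref{cndgammar}.

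For comparison, the paper does \emph{not} use the constant-density measure. It takes the specific non-constant $\mu_s\in M_f(\by,r)$ given by \eqref{dfmusnewest}--\eqref{dfP1newest}, for which $\alpha_{\mu_s}$ varies, and then relies on the tailor-made Lemma~\ref{lmineqforrearnewest} (from \cite{Zel1}) to pass from the $\mu_s$-rearrangement of $X_{\mu_s}$ on $B_r(\by)$ to the Lebesgue rearrangement of an auxiliary kernel on a sub-cube, applying it once in each variable; only then does it invoke Proposition~\ref{estconvBeskern} together with $\min_{[0,1]}f'=(d-2)/d$ to land on $Y$. Your route, once (a) and (b) are fixed, replaces the heavier Lemma~\ref{lmineqforrearnewest} by the elementary domain-monotonicity Lemma~\ref{lminclrear2}, at the cost of having to place scaled Lebesgue in $\mathrm{BP}_{eq}$; the paper's route avoids that placement but needs the structure of $\mu_s$.
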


The following relation between the previous claim and Theorem \ref{thcondlebesgue} is valid:

\begin{proposition}\label{prrellebesdoublebes}
	If the conditions of Theorem \ref{thcondlebesgue} are satisfied, 
	then the conditions of Theorem \ref{thdoubrearrleb} are fulfilled.
\end{proposition}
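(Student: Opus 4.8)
The plan is to deduce the hypothesis \eqref{cnddoubrearrleb} of Theorem \ref{thdoubrearrleb} from the hypothesis \eqref{cndlebesgue} of Theorem \ref{thcondlebesgue} by comparing, pointwise on the cube $Q_r(\by)$, the function $Y(\e,\bt)$ with the potential $V$, and then pushing this comparison through the (monotone) operations of repeated rearrangement. First I would fix $r\in(0,r_0]$ and $\by$ and work on $\Omega=Q_r(\by)$ with Lebesgue measure. The key pointwise observation is that, since $d\ge 3$, the kernel $|\e-\bt|^{-(d-2)}$ is bounded below on $\Omega\times\Omega$ by $(\mathrm{diam}\,Q_r(\by))^{-(d-2)}=(2r\sqrt d)^{-(d-2)}=:c_{d}\,r^{-(d-2)}$; hence
\begin{equation*}
Y(\e,\bt)=\sqrt{V(\e)}\,\frac{1}{|\e-\bt|^{d-2}}\,\sqrt{V(\bt)}\ \ge\ c_d\,r^{-(d-2)}\,\sqrt{V(\e)}\,\sqrt{V(\bt)}\qquad(\e,\bt\in Q_r(\by)).
\end{equation*}
Thus $Y$ dominates a constant multiple of the tensor product $\sqrt{V}\otimes\sqrt{V}$ on the product cube.

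Next I would invoke monotonicity of the repeated nonincreasing rearrangement: if $F_1\le F_2$ $\mu\times\nu$-a.e. then $(F_1^\star)^\star(t,u)\le(F_2^\star)^\star(t,u)$ for all $t,u>0$ (this follows from the corresponding monotonicity of the one-variable rearrangement \eqref{dfSVyrdel}, applied first in $\bt$ and then in $\e$, which is how the repeated rearrangement is built in Section \ref{sec:C}); and that rearrangement commutes with multiplication by a positive constant. Applying this to the displayed inequality with $\mu=\nu=\mathrm{mes}_d$ on $Q_r(\by)$ gives, with $t=u=\psi(r)=\gamma(r)\,\mathrm{mes}_d(Q_r(\by))$,
\begin{equation*}
(Y^\star)^\star\big(\psi(r);\,Q_r(\by)\big)\ \ge\ c_d\,r^{-(d-2)}\,\big((\sqrt V\otimes\sqrt V)^\star\big)^\star\big(\psi(r);\,Q_r(\by)\big).
\end{equation*}
It remains to relate the repeated rearrangement of the product function $\sqrt{V}(\e)\sqrt{V}(\bt)$ to the single rearrangement $V^\star(\psi(r);\,Q_r(\by))$ appearing in \eqref{cndlebesgue}. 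For a product $g(\e)g(\bt)$ with $g\ge 0$, the inner rearrangement in $\bt$ at level $u$ is $g(\e)\,g^\star(u;\,Q_r(\by))$, and the outer rearrangement in $\e$ at level $t$ then yields $g^\star(t;\,Q_r(\by))\,g^\star(u;\,Q_r(\by))$; taking $g=\sqrt V$ and $t=u=\psi(r)$ and using $\big(\sqrt V\big)^\star(s)=\sqrt{V^\star(s)}$ (the rearrangement commutes with the increasing map $x\mapsto\sqrt x$), this equals $V^\star(\psi(r);\,Q_r(\by))$. Combining,
\begin{equation*}
(Y^\star)^\star\big(\psi(r);\,Q_r(\by)\big)\ \ge\ c_d\,r^{-(d-2)}\,V^\star\big(\gamma(r)\,\mathrm{mes}_d(Q_r(\by));\,Q_r(\by)\big),
\end{equation*}
and since for fixed $r$ the factor $c_d\,r^{-(d-2)}>0$ is a positive constant, \eqref{cndlebesgue} forces the left-hand side to tend to $\infty$ as $|\by|\to\infty$, which is exactly \eqref{cnddoubrearrleb}. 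The conclusion then follows from Theorem \ref{thdoubrearrleb}.

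The main obstacle I anticipate is the bookkeeping of the repeated rearrangement in the second-to-last step: I must check carefully, from the construction in Section \ref{sec:C}, that for a separable nonnegative function $g(\e)g(\bt)$ the inner ($\bt$-)rearrangement is literally $g(\e)\,g^\star(\cdot)$ as a function of $\e$ (i.e. that pulling the $\e$-independent-level-set structure out is legitimate), so that the outer rearrangement produces the product of the one-dimensional rearrangements; and that the identity $\big(\sqrt V\big)^\star=\sqrt{V^\star}$ together with monotonicity survives the completion of $\Sigma_B$ by $\mathrm{mes}_d$. Everything else — the lower bound on the kernel via $\mathrm{diam}(Q_r(\by))$ and the monotonicity/homogeneity of rearrangements — is routine.
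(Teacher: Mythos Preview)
Your proposal is correct and follows essentially the same route as the paper: bound $|\e-\bt|^{-(d-2)}$ below by a constant depending only on $r$ on $Q_r(\by)\times Q_r(\by)$, then push through the repeated rearrangement using its monotonicity, positive homogeneity, and the identity $(\sqrt V)^\star=\sqrt{V^\star}$ (the paper packages the latter as Lemma~\ref{lmrerrofpow} and the former as Lemma~\ref{lmcomparrear}). The separability computation you flag as the main obstacle is exactly what the paper handles via these two lemmas, so your concern is well-placed but easily resolved.
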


The following theorem, based on Theorem \ref{thdoubrearrleb}, uses the  concept of a $(\log_m,\,\theta)$- dense system (Definition \ref{dfdenslogmtetpart1}) like Proposition \ref{prlogmdensdoubrearr}. Recall that the sets $\Xi_n(\vec l,\,j)$ were defined by 
\eqref{dfXijvecl}. 

\begin{theorem}\label{thlogmdensdoubrearr}
	Suppose that $\theta\in(0,1)$ and for each $\vec l\in\Z^d$ the sequence $\{D_j(\vec l)\}_{j=1}^\infty$ forms a $(\log_m,\,\theta)$- dense system in $Q_1(\vec l)$. Furthermore,
	suppose that condition \eqref{Xinonempty2} is satisfied,
	Let $\gamma(r)$ be a nondecreasing monotone function 
	satisfying condition \eqref{cndgammar}. If for 
	any natural $n$ the condition 
	\begin{eqnarray}\label{cndlogdensdoubrearr}
	\hskip-8mm&&\lim_{|\vec l|\rightarrow\infty}\;\min_{\vec\xi,\,\vec\eta\in\bigcup_{j=1}^n\Xi_{n}(\vec l,\,j),\;|\vec\xi-\vec\eta|\le\frac{\sqrt{d}}{\theta}m^{-(n-2)}} (Y^\star)^\star\big(\psi(m,n);\,Q(\vec\xi,n),\,Q(\vec\eta,n)\big)=\nonumber\\
	\hskip-8mm&&\infty
	\end{eqnarray}
	is satisfied with $\psi(m,n)=\gamma(m^{-n})\mathrm{mes}_d(Q(\vec 0,n))$, then the spectrum of the operator $H=-\Delta+V(\e)\cdot$ is discrete.
\end{theorem}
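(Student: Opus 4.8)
The plan is to derive Theorem \ref{thlogmdensdoubrearr} from Theorem \ref{thdoubrearrleb} by a purely geometric reduction: I will show that the hypotheses on the $(\log_m,\theta)$-dense systems $\{D_j(\vec l)\}$, together with the limit condition \eqref{cndlogdensdoubrearr} over pairs of small $m$-adic cubes, imply the limit condition \eqref{cnddoubrearrleb} over \emph{all} small cubes $Q_r(\by)$. The first step is to fix $r$ in the admissible range and an arbitrary cube $Q_r(\bz)$ going to infinity, and locate it inside some unit cube $Q_1(\vec l)$ with $|\vec l|\to\infty$. Applying Definition \ref{dfdenslogmtetpart1}(b) to $Q_r(\bz)$ produces an index $j\le[\log_m(1/(\theta r))]$ and a regular parallelepiped $\Pi\subseteq D_j(\vec l)$ containing a subcube $Q_{\theta r}(\bs)\subseteq\Pi\cap Q_r(\bz)$. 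I then choose the scale $n=n(r)$ of the $m$-adic partition so that $m^{-n}$ is comparable to $\theta r$ (say the largest $n$ with $m^{-n}\ge c\,\theta r$ for a suitable absolute constant $c$); since $\Pi$ is a union of $m$-adic cubes of level $n$ once $n$ is chosen appropriately relative to the structure of $\Pi$ (here one uses $\Xi_n(\vec l,j)\ne\emptyset$ from \eqref{Xinonempty2} and that $\Pi$ is a regular parallelepiped), one can find an $m$-adic cube $Q(\vec\xi,n)\subseteq\Pi\cap Q_r(\bz)$ with $\vec\xi\in\Xi_n(\vec l,j)$, and likewise a second one $Q(\vec\eta,n)$ so that the pair $(\vec\xi,\vec\eta)$ satisfies the proximity constraint $|\vec\xi-\vec\eta|\le\frac{\sqrt d}{\theta}m^{-(n-2)}$ appearing in \eqref{cndlogdensdoubrearr}; both cubes lie inside $Q_r(\bz)$.

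The second step is the monotonicity/comparison argument for the repeated nonincreasing rearrangement. Because $Q(\vec\xi,n)\times Q(\vec\eta,n)\subseteq Q_r(\bz)\times Q_r(\bz)$ and $Y\ge 0$, the repeated rearrangement is monotone under enlarging the underlying product domain: for the appropriate threshold one has
\begin{equation*}
(Y^\star)^\star\big(\psi(r);\,Q_r(\bz)\big)\ \ge\ (Y^\star)^\star\big(\psi(m,n);\,Q(\vec\xi,n),\,Q(\vec\eta,n)\big),
\end{equation*}
provided the Lebesgue-measure thresholds match up, i.e. $\psi(r)=\gamma(r)\mathrm{mes}_d(Q_r(\by))\le\gamma(m^{-n})\mathrm{mes}_d(Q(\vec 0,n))=\psi(m,n)$ up to the comparison — and this is exactly where I use that $\gamma$ is nondecreasing and that $m^{-n}$ is comparable to $\theta r$, which makes $\gamma(m^{-n})\mathrm{mes}_d(Q(\vec 0,n))$ and $\gamma(r)\mathrm{mes}_d(Q_r(\by))$ comparable up to a constant depending only on $m$, $\theta$, $d$; absorbing that constant into the threshold (using again monotonicity of the rearrangement in its first argument) closes the estimate. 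Taking $\liminf$ as $\bz\to\infty$ and invoking \eqref{cndlogdensdoubrearr} for the fixed $n=n(r)$ gives $\lim_{|\bz|\to\infty}(Y^\star)^\star(\psi(r);Q_r(\bz))=\infty$, which is \eqref{cnddoubrearrleb}. Theorem \ref{thdoubrearrleb} then yields discreteness of the spectrum.

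The main obstacle I anticipate is the bookkeeping in the second step: making the threshold comparison precise requires knowing how $(Y^\star)^\star(t;\,\Omega)$ behaves when $\Omega$ is enlarged and when $t$ is perturbed by a bounded factor. Enlarging the domain can only help (more mass available), but the normalization of the threshold is in terms of $\mathrm{mes}_d$ of the \emph{reference} cube $Q_r(\by)$ versus $Q(\vec 0,n)$, and one must check that $\psi(r)$ does not exceed what \eqref{cndlogdensdoubrearr} controls — this is why the precise choice of $n$ relative to $\theta r$, and the nondecreasing hypothesis on $\gamma$, are essential rather than cosmetic. A secondary technical point is ensuring that the regular parallelepiped $\Pi\subseteq D_j(\vec l)$ genuinely contains an $m$-adic cube of the chosen level $n$ lying inside $Q_r(\bz)$: since $D_j(\vec l)$ is a finite union of regular parallelepipeds, one may need to first pass to a slightly finer level so that $\Pi$ is a union of level-$n$ $m$-adic cubes, and then verify the count $n\le$ (the upper bound for $j$ in Definition \ref{dfdenslogmtetpart1}(b)) is consistent with requiring $j\in\{1,\dots,n\}$ in \eqref{cndlogdensdoubrearr} — i.e. $j\le[\log_m(1/(\theta r))]\le n(r)$, which follows from the comparability $m^{-n}\asymp\theta r$. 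Once these geometric lemmas are in place the argument is a short deduction, mirroring the way Proposition \ref{prlogmdensdoubrearr} was derived from Proposition \ref{crusecoredstort} in \cite{Zel1}.
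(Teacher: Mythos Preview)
Your overall strategy matches the paper's: reduce to Theorem \ref{thdoubrearrleb} by locating $m$-adic subcubes of the right level inside $Q_r(\bz)$ via the $(\log_m,\theta)$-dense structure, and then invoke monotonicity of the repeated rearrangement in domain and threshold. But the threshold comparison you write is in the wrong direction, and ``absorbing the constant'' does not fix it as stated.

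Concretely: with $m^{-n}\asymp\theta r$ one has $\mathrm{mes}_d(Q(\vec 0,n))\asymp\theta^d\,\mathrm{mes}_d(Q_r(0))$ and, since $\gamma$ is nondecreasing and $m^{-n}<r$, also $\gamma(m^{-n})\le\gamma(r)$. Hence $\psi(m,n)\lesssim\theta^d\psi(r)$, so $\psi(r)$ is the \emph{larger} threshold. Because $W^\star(t;\cdot)$ is nonincreasing in $t$, this yields $(Y^\star)^\star(\psi(r);Q_r(\bz))\le(Y^\star)^\star(\psi(m,n);Q_r(\bz))$, which is the wrong inequality to chain with the domain monotonicity of Lemma \ref{lminclrear2}. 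No multiplicative constant in front of $\gamma(r)$ repairs this, since the obstruction is that $\gamma$ is evaluated at $r$ on one side and at $m^{-n}\asymp\theta r/m^2$ on the other. The correct move (and what the paper does) is to prove \eqref{cnddoubrearrleb} not for $\gamma$ itself but for a smaller function $\bar\gamma(r)=2^{-d}K\gamma(\theta r/(2m^2))\theta^d$ with a suitable $K\in(0,1)$, engineered so that $\bar\gamma(r)\,\mathrm{mes}_d(Q_r(0))\le\psi(m,n(r))$, and then to observe that $\bar\gamma$ still satisfies \eqref{cndgammar}. The paper obtains the chain of inequalities by applying Lemma \ref{prestbarVstarbelownewest} twice (once with $W(\e)=Y(\e,\bt)$ for fixed $\bt$, then with $W(\bt)=Y^\star(\cdot,\bt)(\psi(m,n);Q(\vec\xi,n))$, using Lemma \ref{lmcomparrear} in between); that lemma already contains the threshold adjustment $\hat\gamma(r,K)=K\gamma(\theta r/m^2)\theta^d$.

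Two smaller points you also glossed over: $Q_r(\bz)$ need not lie inside a single $Q_1(\vec l)$, so the paper first passes to a subcube $Q_{r/2}(\tilde\by)\subseteq Q_r(\bz)\cap Q_1(\vec l)$; and the proximity constraint $|\vec\xi-\vec\eta|\le\frac{\sqrt d}{\theta}m^{-(n-2)}$ is not a separate choice but follows automatically once both $Q(\vec\xi,n)$ and $Q(\vec\eta,n)$ lie in $F_j\subseteq Q_{r/2}(\tilde\by)$, since the formula $n=[\log_m(2/(\theta r))]+2$ gives $r\le\frac{2}{\theta}m^{-(n-2)}$.
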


\section{Proof of main results} \label{sec:proofmainres}
\setcounter{equation}{0}

\subsection{Proof of Theorem \ref{thcondlebesgue}}
\begin{proof}
	Let us take a ball $B_r(\by)$ and consider on it the probability measure $m_{d,r}$, defined by \eqref{dfmdr}. 
	Consider the function $f(t)=t^{(d-2)/d}$ and the absolutely continuous measure $\mu_s$ on $B_r(\by)$, whose density is $f^\prime\circ s_{r,\by}$. This means that for any measurable set $A\subseteq B_r(\by)$
	the equality  is valid
	\begin{equation}\label{dfmusnewest}
	\mu_s(A)=\mathrm{cap}(B_r(0))\int_Af^\prime(s_{r,\by}(\e))m_{d,r}(\mathrm{d}\e),
	\end{equation}
	where the function $s_{r,\by}:\,B_r(\by)\rightarrow [0,1]$ has the form:
	\begin{equation}\label{dfsrbynewest}
	s_{r,\by}(\e):=m_{d,r}\big(\{\bs\in B_r(\by):\,P_1\bs\le P_1\e\}\big)
	\end{equation}
	and $P_1$ is the
	following operator $P_1:\,\R^d\rightarrow\R$:
	\begin{equation}\label{dfP1newest}
	\mathrm{for}\quad
	\e=(x_1,x_2,\dots,x_d)\quad P_1\e:=x_1.
	\end{equation}
	It is easy to check, that $s_{r,\by}$ is a measure preserving mapping. Hence the measure $\mu_s$ belongs to $M_f(r,\by)$.
	Using Lemma \ref{lmineqforrearnewest} with $t=\gamma(\tilde r)\;(\tilde r=r/\sqrt{d})$
	and 
	taking $W(\e)=V(\e)$, we get that for some $\kappa,\delta\in(0,1)$ and any $\by\in\R^d$, $r\in(0,1)$ there 
	are $\vec l(\by,r)\in\Z^d$ and a cube $Q_{\tilde r}(\tilde\by)\subseteq B_r(\by)\cap Q_1(\vec l(\by,r))$ 
	such that 
	\begin{eqnarray}\label{ineqforrearV1}
	&& Z^\star_{\mu_s}\big(\bar\gamma(r)\cdot\mu_s(B_r(\by));\,B_r(\by);\,\mu_s\big)\ge\nonumber\\
	&&\delta\cdot\ V^\star\big(\gamma(\tilde r)
	\mathrm{mes}_d(Q_{\tilde r}(\tilde\by));\,Q_{\tilde r}(\tilde\by)\big)
	\end{eqnarray}
	where $\bar\gamma(r)=\kappa\gamma(r/\sqrt{d})$. Notice that since the function $\gamma(r)$ satisfies conditions \eqref{cndgammar}, the function $\bar\gamma(r)$ satisfies this condition too for some $r_0>0$. Then condition \eqref{cndlebesgue} and Proposition \ref{crusecoredstort} imply the desired claim.
\end{proof}

In the proof of Theorem \ref{thcondlebesgue} we have used the following claim from \cite{Zel1}:

\begin{lemma}\label{lmineqforrearnewest}[\cite{Zel1}, Lemma 4.2]
	For some $\kappa,\delta\in(0,1)$ and any ball $B_r(\by)$ with $r\in(0,1)$ there are $\vec l=\vec l(\by,r)\in\Z^d$ and a cube 
	\begin{equation}\label{incltointersectnew}
	Q_{\tilde r}(\tilde\by)\subseteq B_r(\by)\cap Q_1(\vec l(\by,r))
	\end{equation}
	with $\tilde r=r/\sqrt{d}$ such that for any nonnegative function 
	$W\in L_1(B_r(\by))$ and $t\in(0,1)$ the inequality
	\begin{equation}\label{ineqforrearnew}
	 Z^\star_{\mu_s,\,W}\big(\kappa t\cdot\mu_s(B_r(\by));\,B_r(\by);\,\mu_s\big)\ge\delta\cdot W^\star\big(t\cdot\mathrm{mes}_d(Q_{\tilde r}(\tilde\by));\,Q_{\tilde r}(\tilde\by)\big)
	\end{equation}
	is valid, where $\mu_s$ is the measure on $B_r(\by)$, defined by \eqref{dfmusnewest} with $f(t)=t^{(d-2)/d}$ and $Z_{\mu_s,\,W}(\e)=W(\e)\big(f^\prime(s_{r,\by}(\e))\big)^{-1}$
	with the function $s_{r,\by}(\e)$, defined by \eqref{dfsrbynewest}, \eqref{dfP1newest}.
	\end{lemma}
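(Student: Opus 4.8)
The plan is to transplant the trivial pointwise bound ``$W\ge W$'' from the cube inscribed in $B_r(\by)$ to the whole ball carrying the weighted measure $\mu_s$: the stratification function $s_{r,\by}$, which controls the weight, stays bounded away from $0$ on that inscribed cube by a constant depending only on $d$ (by scale invariance), so on the cube $(f'(s_{r,\by}))^{-1}$ is bounded below and the $\mu_s$-density is comparable to Lebesgue density, uniformly in $r$.

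First I would pin down the geometry. The cube $Q_{\tilde r}(\by)$ with $\tilde r=r/\sqrt d$ is the (unique) cube of half-side $\tilde r$ contained in $B_r(\by)$, namely the inscribed one; since its side $2r/\sqrt d<2$, for each coordinate one can pick an integer $l_i$ with $[y_i-\tilde r,y_i+\tilde r]\subseteq[l_i-1,l_i+1]$, so $Q_{\tilde r}(\tilde\by)\subseteq B_r(\by)\cap Q_1(\vec l)$ with $\tilde\by=\by$. Next I would record, using that $s_{r,\by}$ is a measure preserving map of $(\bar B_r(\by),m_{d,r})$ onto $([0,1],\lambda)$ and that $f(1)-f(0)=1$:
\begin{equation*}
\mu_s(B_r(\by))=\mathrm{cap}(B_r(0))\int_{B_r(\by)}f'(s_{r,\by}(\e))\,m_{d,r}(\mathrm{d}\e)=\mathrm{cap}(B_r(0))\int_0^1 f'(\tau)\,\mathrm{d}\tau=\mathrm{cap}(B_r(0)).
\end{equation*}
Moreover $s_{r,\by}(\e)$ equals the $m_{d,r}$-fraction of $B_r(\by)$ lying in $\{P_1\bs\le P_1\e\}$, so on the slab $\{|P_1\e-y_1|\le\tilde r\}$ — in particular on $Q_{\tilde r}(\by)$ — one has $c_1(d)\le s_{r,\by}(\e)\le 1$, where $c_1(d)\in(0,1/2)$ is the (scale-invariant) fraction of a ball cut off by a hyperplane at relative distance $1/\sqrt d$ from its center. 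Since $(f'(\tau))^{-1}=\frac{d}{d-2}\tau^{2/d}$ is increasing and $f'(\tau)=\frac{d-2}{d}\tau^{-2/d}\ge\frac{d-2}{d}$ on $(0,1]$, this gives on $Q_{\tilde r}(\by)$ both $Z_{\mu_s,W}(\e)=W(\e)(f'(s_{r,\by}(\e)))^{-1}\ge\delta_0\,W(\e)$ with $\delta_0:=\frac{d}{d-2}c_1(d)^{2/d}$, and that the $\mu_s$-density with respect to $\mathrm{mes}_d$ is at least $\frac{d-2}{d}\,\mathrm{cap}(B_r(0))/\mathrm{mes}_d(B_r(\by))$.

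The final step is the rearrangement chase. Fix $t\in(0,1)$ and $0\le W\in L_1(B_r(\by))$, set $S:=W^\star\big(t\,\mathrm{mes}_d(Q_{\tilde r}(\tilde\by));\,Q_{\tilde r}(\tilde\by)\big)$, which is finite, and — the case $S=0$ being trivial — take $S'\in(0,S)$. By the definition of the non-increasing rearrangement, $A:=\{\e\in Q_{\tilde r}(\tilde\by):\,W(\e)\ge S'\}$ has $\mathrm{mes}_d(A)\ge t\,\mathrm{mes}_d(Q_{\tilde r}(\tilde\by))$. The density bound then yields
\begin{equation*}
\mu_s(A)\ge\frac{d-2}{d}\cdot\frac{\mathrm{mes}_d(Q_{\tilde r}(\tilde\by))}{\mathrm{mes}_d(B_r(\by))}\cdot t\cdot\mathrm{cap}(B_r(0))\ge\kappa\,t\,\mu_s(B_r(\by)),
\end{equation*}
where $\kappa:=\min\big\{\tfrac{d-2}{d}(2/\sqrt d)^d/\omega_d,\,\tfrac12\big\}\in(0,1)$ (the volume ratio is a dimensional constant). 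On $A$ we have $Z_{\mu_s,W}\ge\delta_0 W\ge\delta_0 S'$, hence $A\subseteq\{\e\in B_r(\by):\,Z_{\mu_s,W}(\e)\ge\delta_0 S'\}$ and $\mu_s\big(\{Z_{\mu_s,W}\ge\delta_0 S'\}\big)\ge\kappa t\,\mu_s(B_r(\by))$, so by the definition of $Z^\star_{\mu_s,W}$ one gets $Z^\star_{\mu_s,W}\big(\kappa t\,\mu_s(B_r(\by));\,B_r(\by);\,\mu_s\big)\ge\delta_0 S'$. Letting $S'\uparrow S$ and setting $\delta:=\min\{\delta_0,\tfrac12\}$ gives \eqref{ineqforrearnew} with $\kappa,\delta$ depending only on $d$.

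I expect the only point requiring care to be the geometric placement: fitting the inscribed cube of $B_r(\by)$, of half-side exactly $r/\sqrt d$, inside one lattice cube $Q_1(\vec l)$ can fail in low dimension for $r$ near $1$ (the coordinate interval $[y_i-1+\tilde r,\,y_i+1-\tilde r]$ then has length $<1$), which is handled by shrinking $r_0$. The analytic content — the uniformity of $\kappa$ and $\delta$ in $r$ — is exactly the scale invariance of $c_1(d)$ and of the cube-to-ball volume ratio, so there is no real obstacle there.
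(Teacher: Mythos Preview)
The paper does not actually prove this lemma: it is quoted verbatim from the author's earlier work \cite{Zel1} (as Lemma~4.2 there), so there is no proof in the present paper to compare your argument against.

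On its own merits your argument is correct and is the natural one. The two analytic points are exactly right: (a) on the inscribed cube $Q_{\tilde r}(\by)$ the stratification value $s_{r,\by}(\e)$ is bounded below by the scale-invariant spherical-cap fraction $c_1(d)$, which makes $(f'(s_{r,\by}))^{-1}\ge\frac{d}{d-2}c_1(d)^{2/d}$ there; and (b) globally $f'(\tau)\ge\frac{d-2}{d}$, so the $\mu_s$-density with respect to Lebesgue is bounded below by $\frac{d-2}{d}\,\mathrm{cap}(B_r(0))/\mathrm{mes}_d(B_r(0))$. The rearrangement chase then goes through verbatim, and all constants collapse to purely dimensional ones because the cube-to-ball volume ratio $(2/\sqrt d)^d/\omega_d$ is scale-free.

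The only point that is not merely cosmetic is the geometric placement you flag at the end. With the paper's convention $Q_\rho=\rho\cdot[-1,1]^d$, the axis-aligned cube of half-side exactly $r/\sqrt d$ contained in $\bar B_r(\by)$ is unique (its corners touch the sphere), so $\tilde\by=\by$ is forced; then for $d=3$ and $r\in(\sqrt3/2,\,1)$ the interval $[y_i-1+\tilde r,\,y_i+1-\tilde r]$ has length $2(1-r/\sqrt3)<1$ and may miss every integer, so no lattice cube $Q_1(\vec l)$ contains $Q_{\tilde r}(\by)$. Your proposed fix (restricting to $r\le r_0$ small) is exactly what all the applications in this paper need, since the lemma is only invoked with $r\to0$; whether the original statement in \cite{Zel1} handles the full range $r\in(0,1)$ by some other device, or simply has the same imprecision, cannot be read off from the present paper.
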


\subsection{Proof of Proposition \ref{threlprlogdenstlebes}}

\begin{proof}
	Suppose that all the conditions of Proposition \ref{prlogmdensdoubrearr} are satisfied.
	Let us take a cube $Q_r(\by)$.  It is clear that there 
are $\vec l(\by,r)\in\Z^d$ and a cube $Q_{r/2}(\tilde\by)\subseteq Q_r(\by)\cap Q_1(\vec l(\by,r))$. In view of Lemma \ref{prestbarVstarbelownewest} with $W(\e) =V(\e)$, Lemma 
 \ref{lminclrear2} and condition \eqref{Xinonempty2}, there are $j\in\{1,2,\dots,n\}$ with 
$n=\big[\log_m\big(\frac{2}{\theta r}\big)\big]+2$
and $K\in(0,1)$, such that 
\begin{eqnarray}\label{ineqforrearV1newestrel}
\hskip-12mm&& V^\star\big(\bar\gamma(r)\cdot\mathrm{mes}_d(Q_r(0));\,Q_r(\by)\big)\ge\nonumber\\ 
\hskip-12mm&&V^\star\big(\hat\gamma(r/2,K)\cdot\mathrm{mes}_d(Q_{r/2}(0));\,Q_{r/2}(\tilde\by)\big)\ge
\nonumber\\
\hskip-12mm&&
\min_{\vec\xi:\,Q(\vec\xi,\,n)\subseteq F_j} V^\star\big(\psi(m,n);\,\,Q(\vec\xi,n)\big)\ge
\min_{\vec\xi\in\bigcup_{j=1}^n\Xi_{n}(\vec l,\,j)} V^\star\big(\psi(m,n);\,Q(\vec\xi,n)\big),
\end{eqnarray}
where
\begin{equation}\label{dfhatgamrKnewest}
\hat\gamma(\rho,K)=K\gamma(\theta \rho/m^2)\theta^d,
\end{equation} 
$\psi(m,n)=\gamma(m^{-n})\mathrm{mes}_d(Q(\vec 0,n))$, $\bar\gamma(r)=2^{-d}\hat\gamma(r/2,\,K)$ and $F_j$ is a non-empty union of cubes $Q(\vec\xi,\,n)$ with $\vec\xi\in m^{-n}\cdot\Z^d$, such that $F_j\subseteq Q_{r/2}(\tilde\by)\cap D_j(\vec l(\by,r))$. Notice that since the function $\gamma(r)$ satisfies conditions \eqref{cndgammar}, the function $\bar\gamma(r)$ satisfies this condition too for some $r_0>0$.
Since $Q_r(\by)\cap Q_1(\vec l(\by,r))\neq\emptyset$, estimate \eqref{ineqforrearV1newestrel} and 
condition \eqref{cndlogdensdoubrearrprev} imply that condition \eqref{cndlebesgue}
of Theorem \ref{thcondlebesgue} is satisfied with $\gamma(r)=\bar\gamma(r)$. Proposition \ref{threlprlogdenstlebes} is proven.	
\end{proof}

In the proof of Proposition \ref{threlprlogdenstlebes} f we have used the following claims, proved in \cite{Zel1}:

\begin{lemma}\label{prestbarVstarbelownewest}[\cite{Zel1}, Lemma 4.3]
	Suppose that in a cube $Q_1(\vec l)\,(\vec l\in\Z^d)$ there is is a sequence of  subsets $\{D_n(\vec l)\}_{n=1}^\infty$ forming in it a $(\log_m,\,\theta)$-dense system.
	Let $\gamma:\,(0,\,r_0)\rightarrow\R$ be a monotone nondecreasing function with $r_0=\min\{1,\,1/(m^2\theta)\}$. Then 
	for some  $K\in(0,1)$ and for
	any cube $Q_r(\by)\subset Q_1(\vec l)$ there are $j\in\{1,2,\dots,n\}$, with 
	$n=\big[\log_m\big(\frac{1}{\theta r}\big)\big]+2$,
	and a non-empty set $F_j\subseteq Q_r(\by)\cap D_j(\vec l)$, which is a union of cubes 
	$Q(\vec\xi\,,n)$ with $\vec\xi\in m^{-n}\cdot\Z^d$,
	such that for any nonnegative function $W\in L_1(Q_r(\by))$ the inequality
	\begin{eqnarray}
	&&\hskip-8mm	W^\star(\hat\gamma(r,K)\mathrm{mes}_d(Q_r(\by));\,\,Q_r(\by))\ge\nonumber\\
	&&\hskip-8mm	\min_{\vec\xi:\;Q(\vec\xi,\,n)\subseteq F_j} W^\star\big(\gamma(m^{-n})\mathrm{mes}_d(Q(\vec\xi,n));\,Q(\vec\xi,n)\big) 
	\end{eqnarray}
	is valid, where the function$\hat\gamma(r,K)$ is defined by \eqref{dfhatgamrKnewest}.
\end{lemma}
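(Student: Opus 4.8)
\textbf{Proof proposal for Theorem \ref{thlogmdensdoubrearr}.}

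The plan is to deduce this theorem from Theorem \ref{thdoubrearrleb} in the same way Proposition \ref{threlprlogdenstlebes}'s proof deduces the one-variable Lebesgue condition from the $(\log_m,\theta)$-dense machinery, but now carrying along \emph{two} variables $\e$ and $\bt$ in the rearrangement. Fix $\vec l\in\Z^d$ and a cube $Q_r(\by)\subseteq Q_1(\vec l)$ with $r$ small, and recall that we must estimate $(Y^\star)^\star\big(\psi(r);\,Q_r(\by)\big)$ from below in terms of the quantities $(Y^\star)^\star\big(\psi(m,n);\,Q(\vec\xi,n),\,Q(\vec\eta,n)\big)$ appearing in \eqref{cndlogdensdoubrearr}. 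First I would pick, as in the proof of Proposition \ref{threlprlogdenstlebes}, a sub-cube $Q_{r/2}(\tilde\by)\subseteq Q_r(\by)\cap Q_1(\vec l(\by,r))$; then, applying the $(\log_m,\theta)$-dense property to $Q_{r/2}(\tilde\by)$, obtain an index $j\in\{1,\dots,n\}$ with $n=\big[\log_m\big(\tfrac{2}{\theta r}\big)\big]+2$ and, inside $Q_{r/2}(\tilde\by)\cap D_j(\vec l(\by,r))$, a cube $Q_{\theta r/(2m^2)}(\bs)$. Inside that last cube lie at least two (in fact many) of the $m$-adic sub-cubes $Q(\vec\xi,n)$, $Q(\vec\eta,n)$ with $\vec\xi,\vec\eta\in\Xi_n(\vec l(\by,r),j)$; by a volume/geometry count one may choose such a pair with $|\vec\xi-\vec\eta|\le\frac{\sqrt d}{\theta}m^{-(n-2)}$, which is exactly the constraint under the $\min$ in \eqref{cndlogdensdoubrearr}.

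The heart of the argument is a two-variable version of Lemma \ref{prestbarVstarbelownewest}: I claim that if $Q(\vec\xi,n)$ and $Q(\vec\eta,n)$ are contained in a common sub-cube $F_j\subseteq Q_r(\by)\cap D_j(\vec l)$, then for any nonnegative $F\in L_1\big(Q_r(\by)\times Q_r(\by)\big)$,
\begin{equation*}
(F^\star)^\star\big(\hat\gamma(r,K)^2\,\mathrm{mes}_d(Q_r(\by))^2;\,Q_r(\by)\big)\ \ge\ (F^\star)^\star\big(\gamma(m^{-n})^2\,\mathrm{mes}_d(Q(\vec 0,n))^2;\,Q(\vec\xi,n),\,Q(\vec\eta,n)\big),
\end{equation*}
with $\hat\gamma$ as in \eqref{dfhatgamrKnewest}. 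This is proved by the monotonicity principle for rearrangements: restricting the ambient measure space from $Q_r(\by)\times Q_r(\by)$ to the smaller product $Q(\vec\xi,n)\times Q(\vec\eta,n)$ can only decrease a repeated nonincreasing rearrangement at a fixed \emph{relative} level, and the level $\gamma(m^{-n})^2\mathrm{mes}_d(Q(\vec0,n))^2$ is, by the choice $n=\big[\log_m\big(\tfrac{2}{\theta r}\big)\big]+2$ and the volume ratios $\mathrm{mes}_d(Q(\vec0,n))/\mathrm{mes}_d(Q_r(\by))\sim(\theta/(2m^2))^d$, dominated by a constant multiple of $\hat\gamma(r,K)^2\mathrm{mes}_d(Q_r(\by))^2$ once $K$ is chosen small; here I use that $\gamma$ is monotone nondecreasing and that repeated rearrangements of integrable functions on products of two measure spaces exist (Proposition \ref{prexistpartrear}, Section \ref{sec:C}). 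Applying this with $F=Y$ (note $Y(\e,\bt)=\sqrt{V(\e)}\,|\e-\bt|^{-(d-2)}\sqrt{V(\bt)}$ is indeed nonnegative and locally integrable on such products because $V\in L_{p,loc}$ with $p>d/2$ controls $\sqrt V\in L_{2p,loc}$ against the weak-$L^{d/(d-2)}$ kernel) chains the estimate down to the $m$-adic scale.

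Combining the two displays and passing to the $\min$ over all admissible $\vec\xi,\vec\eta$ gives
\begin{equation*}
(Y^\star)^\star\big(\bar\gamma(r)\,\mathrm{mes}_d(Q_r(0))^2;\,Q_r(\by)\big)\ \ge\ \min_{\substack{\vec\xi,\vec\eta\in\bigcup_{j=1}^n\Xi_n(\vec l(\by,r),j)\\ |\vec\xi-\vec\eta|\le\frac{\sqrt d}{\theta}m^{-(n-2)}}}(Y^\star)^\star\big(\psi(m,n);\,Q(\vec\xi,n),\,Q(\vec\eta,n)\big),
\end{equation*}
with $\bar\gamma(r)=2^{-2d}\hat\gamma(r/2,K)^2$ (or the appropriately normalized single-variable level $\psi(r)=\bar\gamma(r)\,\mathrm{mes}_d(Q_r(\by))$), and $\bar\gamma$ again satisfies \eqref{cndgammar} because $\gamma$ does and squaring preserves the $\limsup r^{-2}\gamma(r)=\infty$ property up to the scale change $r\mapsto\theta r/(2m^2)$. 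Letting $|\by|\to\infty$ (equivalently $|\vec l|\to\infty$), hypothesis \eqref{cndlogdensdoubrearr} forces the right-hand side to $\infty$, so condition \eqref{cnddoubrearrleb} of Theorem \ref{thdoubrearrleb} holds with this $\bar\gamma$, and discreteness of the spectrum of $H$ follows. \textbf{The main obstacle} I anticipate is the two-variable rearrangement comparison in the middle paragraph: unlike the one-variable case, the repeated rearrangement $(F^\star)^\star$ is built in two stages (first in $\bs$, then in the $\bt$-like variable), so "restriction of the domain decreases the rearrangement at a fixed relative level" must be justified carefully stage by stage — this is where the Riesz-space facts cited for Section \ref{sec:C} and a Fubini-type argument for the level-set measures do the real work, and one must check that the relative level $\psi(m,n)$ is genuinely below the corresponding relative level at scale $r$ uniformly in $r\in(0,r_0)$.
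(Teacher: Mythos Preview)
Your proposal is aimed at the wrong target: the statement you were asked to address is Lemma~\ref{prestbarVstarbelownewest} (the one-variable rearrangement comparison, quoted from \cite{Zel1} and not reproved in this paper), but what you have written is a proof sketch for Theorem~\ref{thlogmdensdoubrearr}. Since the paper contains no proof of Lemma~\ref{prestbarVstarbelownewest} to compare against, let me instead compare your sketch with the paper's actual proof of Theorem~\ref{thlogmdensdoubrearr} in Section~5.8.

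The overall architecture matches: pass to a half-cube $Q_{r/2}(\tilde\by)$ inside $Q_r(\by)\cap Q_1(\vec l)$, invoke the $(\log_m,\theta)$-dense structure to locate $m$-adic subcubes $Q(\vec\xi,n),Q(\vec\eta,n)$ inside some $F_j\subseteq D_j(\vec l)$, observe the diameter bound $|\vec\xi-\vec\eta|\le\frac{\sqrt d}{\theta}m^{-(n-2)}$, and feed the resulting lower bound into Theorem~\ref{thdoubrearrleb}. The difference is in how the repeated rearrangement is pushed down to the $m$-adic scale. The paper does \emph{not} prove a two-variable analogue of Lemma~\ref{prestbarVstarbelownewest}; instead it applies the one-variable Lemma~\ref{prestbarVstarbelownewest} once with $W(\e)=Y(\e,\bt)$ at frozen $\bt$, then uses the monotonicity Lemma~\ref{lmcomparrear} to pass to the partial rearrangement, and then applies Lemma~\ref{prestbarVstarbelownewest} a second time with $W(\bt)=Y^\star(\cdot,\bt)(\psi(m,n);Q(\vec\xi,n))$. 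This two-pass strategy keeps the level \emph{linear}: the paper ends with $\bar\gamma(r)=2^{-d}\hat\gamma(r/2,K)$, which manifestly inherits condition~\eqref{cndgammar} from $\gamma$.

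Your single-shot two-variable comparison, by contrast, produces a \emph{squared} level $\bar\gamma(r)=2^{-2d}\hat\gamma(r/2,K)^2$, and here there is a genuine error: the assertion that ``squaring preserves the $\limsup_{r\downarrow 0} r^{-2}\gamma(r)=\infty$ property'' is false. Take $\gamma(r)=r^{3/2}$: then $r^{-2}\gamma(r)=r^{-1/2}\to\infty$, but $r^{-2}\gamma(r)^2=r\to 0$, so $\gamma^2$ fails~\eqref{cndgammar}. More generally any $\gamma(r)=r^\alpha$ with $1\le\alpha<2$ is a counterexample. Thus even if your two-variable monotonicity claim were granted, the final appeal to Theorem~\ref{thdoubrearrleb} would not go through for such $\gamma$. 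The fix is exactly what the paper does: split the repeated rearrangement into two one-variable steps so that the level in each variable stays proportional to $\gamma$ rather than $\gamma^2$. (Note also that in the paper's notational convention $(F^\star)^\star(t;X;\mu)$ the argument $t$ is a one-dimensional measure level applied to each variable separately, not a product-measure level, so your displayed inequality with $\mathrm{mes}_d(Q_r(\by))^2$ is dimensionally inconsistent with that convention.)
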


\begin{lemma}\label{lminclrear2}[\cite{Zel1}, Lemma 4.5]
	Let $\Omega_1$ and $\Omega_2$ be measurable subsets of $\R^d$ such that $\Omega_1\subseteq\Omega_2$ and $W(\e)$ be a non-negative measurable function 
	defined on $\Omega_2$. Then for any $t>0$ the inequality
	$ W^\star(t;\,\Omega_1)\le W^\star(t;\,\Omega_2)$
	is valid.
\end{lemma}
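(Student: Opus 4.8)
The plan is to trace the asserted inequality back to the elementary monotonicity of the superlevel sets of $W$ under enlargement of the underlying domain, working straight from the definition of the nonincreasing rearrangement in \eqref{dfSVyrdel}--\eqref{dfLVsry} (with $\mu=\mathrm{mes}_d$, the case suppressed in the notation $W^\star(t;\,\Omega)$).

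First I would unwind the definitions: for $i=1,2$ one has
\[
W^\star(t;\,\Omega_i)=\sup\{s>0:\;\lambda^\star(s;\,W;\,\Omega_i)\ge t\},\qquad \lambda^\star(s;\,W;\,\Omega_i)=\mathrm{mes}_d\big(\mathcal{L}^\star(s;\,W;\,\Omega_i)\big),
\]
where $\mathcal{L}^\star(s;\,W;\,\Omega_i)=\{\e\in\Omega_i:\,W(\e)\ge s\}$, where on $\Omega_1$ it is the restriction $W|_{\Omega_1}$ that is being rearranged, and where we use the convention $\sup\emptyset=0$. This reduces the whole statement to comparing, for each fixed $s>0$, the two distribution functions $\lambda^\star(s;\,W;\,\Omega_1)$ and $\lambda^\star(s;\,W;\,\Omega_2)$.

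The only substantive step is then the observation that, because $\Omega_1\subseteq\Omega_2$, for every $s>0$
\[
\mathcal{L}^\star(s;\,W;\,\Omega_1)=\Omega_1\cap\mathcal{L}^\star(s;\,W;\,\Omega_2)\subseteq\mathcal{L}^\star(s;\,W;\,\Omega_2),
\]
so monotonicity of Lebesgue measure gives $\lambda^\star(s;\,W;\,\Omega_1)\le\lambda^\star(s;\,W;\,\Omega_2)$. Consequently the index set $\{s>0:\,\lambda^\star(s;\,W;\,\Omega_1)\ge t\}$ is contained in $\{s>0:\,\lambda^\star(s;\,W;\,\Omega_2)\ge t\}$, and passing to suprema yields $W^\star(t;\,\Omega_1)\le W^\star(t;\,\Omega_2)$.

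I do not expect any genuine obstacle here; the one point I would be careful about is the degenerate range $t>\mathrm{mes}_d(\Omega_1)$, in which the index set on the left is empty, so that $W^\star(t;\,\Omega_1)=0$ and the inequality holds trivially. In particular no finiteness hypothesis on $\mathrm{mes}_d(\Omega_2)$ is needed --- only the inclusion $\Omega_1\subseteq\Omega_2$ together with measurability of $W$ on $\Omega_2$ (which makes $W|_{\Omega_1}$ measurable and all of the sets above Lebesgue measurable).
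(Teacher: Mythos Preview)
Your argument is correct: the inclusion $\mathcal{L}^\star(s;\,W;\,\Omega_1)\subseteq\mathcal{L}^\star(s;\,W;\,\Omega_2)$ for every $s>0$ gives $\lambda^\star(s;\,W;\,\Omega_1)\le\lambda^\star(s;\,W;\,\Omega_2)$, hence the inclusion of the index sets and the inequality of their suprema. Note that this paper does not itself supply a proof of the lemma --- it is quoted from \cite{Zel1}, Lemma~4.5 --- so there is no argument here to compare yours against; the route you take is in any case the natural (and essentially the only) one.
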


\subsection{Proof of Theorem \ref{thusecore}}

\begin{proof}

		Let us use the strong capacitary inequality \eqref{strtypineq}, taking
	$p=2$, , 
	$f(\e)=\sqrt{V(\e)}\chi_{\mathcal{G}_r(\by)\setminus F}(\e)$,
	$g(\e)=G_1(\e)$ and $F\in\mathcal{N}_{\gamma(r)}(\by,r)$ (
	definition \eqref{defNcap}). Here $\chi_E$ is the characteristic function of a set $E\subseteq \mathcal{G}_r(\by)$
	Then we have,
	using definition \eqref{dfgenChoqint} of the Choquet integral and monotonicity of the capacity $C_{g,p}$, defined by \eqref{dfCgp}, \eqref{dfOmE}:
	\begin{eqnarray*}
	&&\int_{\mathcal{G}_r(\by)\setminus F}V(\e)\, \mathrm{d}\e\ge\\
	&&A^{-1}\int_{R^d}\Big(\int_{\mathcal{G}_r(\by)\setminus
		F}G_1(\e-\bs)\sqrt{V(\bs)}\,\mathrm{d}\bs\Big)^2C_{G_1,2}(\mathrm{d}\,\e)\ge\\
	&&A^{-1}\int_{\bar{\mathcal G}_r(\by)}\Big(\int_{\mathcal{G}_r(\by)\setminus
		F}G_1(\e-\bs)\sqrt{V(\bs)}\,\mathrm{d}\bs\Big)^2
	C_{G_1,2}(\mathrm{d}\,\e).
	\end{eqnarray*}
	Since $V\in L_{p,\,loc}(\R^d)$ with $p>d/2$, 
	then by claim (ii) of Lemma \ref{lmC12eqWien}, the function in the
	brackets in the last integral is continuous and bounded in $\bar{\mathcal G}_r(\by)$, hence there it is a bounded Borel function. Furthermore,
	in view of claim (i) of Lemma \ref{lmC12eqWien}, 
	the inequality is valid for some constant $B>0$:
	\begin{eqnarray*}
	&&\int_{\mathcal{G}_r(\by)\setminus
		F}V(\e)\, \mathrm{d}\e\ge A^{-1}\times\\
	&&\int_0^\infty C_{G_1,2}\Big(\Big\{\e\in \mathcal{G}_r(\by):\;\Big(\int_{\mathcal{G}_r(\by)\setminus
		F}G_1(\e-\bs)\sqrt{V(\bs)}\,\mathrm{d}\bs\Big)^2\ge t\Big\}\Big)\,dt\ge\nonumber\\
	&&B\int_0^\infty \mathrm{cap}\Big(\Big\{\e\in \mathcal{G}_r(\by):\;\Big(\int_{\mathcal{G}_r(\by)\setminus
		F}G_1(\e-\bs)\sqrt{V(\bs)}\,\mathrm{d}\bs\Big)^2\ge t\Big\}\Big)\,dt=\nonumber\\
	&&B\int_{\bar{\mathcal G}_r(\by)}\Big(\int_{\mathcal{G}_r(\by)\setminus
		F}G_1(\e-\bs)\sqrt{V(\bs)}\,\mathrm{d}\bs\Big)^2\,\mathrm{cap}(\mathrm{d}\,\e).
	\nonumber\\
	\end{eqnarray*}
	Hence by Proposition \ref{prcore} and Lemma \ref{lminrchanginfsup},
	\begin{eqnarray}\label{estbelowintV}
	&&\inf_{F\in\mathcal{N}_{\gamma(r)}(\by,r)}\int_{\mathcal{G}_r(\by)\setminus
		F}V(\e)\, \mathrm{d}\e\ge  B\times\\
	&&\inf_{F\in\mathcal{N}_{\gamma(r)}(\by,r)}\;\sup_{\mu\,\in\,\mathrm{BP}(\by,r)}\int_{\bar{\mathcal G}_r(\by)}\Big(\int_{\mathcal{G}_r(\by)\setminus
		F}G_1(\e-\bs)\sqrt{V(\bs)}\,\mathrm{d}\bs\Big)^2\,\mu( \mathrm{d}\e)\ge\nonumber\\
	&&B\,\sup_{\mu\,\in\,\mathrm{BP}(\by,r)}\;\inf_{F\in\mathcal{N}_{\gamma(r)}(\by,r)}\int_{\bar{\mathcal G}_r(\by)}\Big(\int_{\mathcal{G}_r(\by)\setminus
		F}G_1(\e-\bs)\sqrt{V(\bs)}\,\mathrm{d}\bs\Big)^2\,\mu(
	\mathrm{d}\e).\nonumber
	\end{eqnarray}
	On the other hand,
	by definition \eqref{defcore} of the base polyhedron for the harmonic
	capacity on $\bar\Omega=\bar{\mathcal G}_r(\by)$, 
	$\mu\in\mathrm{BP}(\by,r)$ and any compact set $F\subseteq\bar
	{\mathcal G}_r(\by)$
	\begin{equation*}
	\frac{\mu(F)}{\mu(\bar
		{\mathcal G}_r(\by))}\le\frac{\mathrm{cap}(F)}{\mathrm{cap}(\bar{\mathcal G}_r(\by))}.
	\end{equation*}
	Hence, in view of definitions \eqref{defNcap} and \eqref{defMmu},
	the inclusion is valid: 
	\begin{equation*}
	\mathcal{N}_{\gamma(r)}(\by,r)\subseteq
	\mathcal{M}^\mu_{\gamma(r)}(\by,r).
	\end{equation*}
	 This circumstance and
	inequality \eqref{estbelowintV} imply that
	\begin{eqnarray*}
		&&\inf_{F\in\mathcal{N}_{\gamma(r)}(\by,r)}\int_{\mathcal{G}_r(\by)\setminus
			F}V(\e)\, \mathrm{d}\e\ge\\
		&&B\,\sup_{\mu\in\mathrm{\mathrm{BP}(\by,r)}}\;\inf_{F\in\mathcal{M}^\mu_{\gamma(r)}(\by,r)}\int_{\bar{\mathcal G}_r(\by)}\Big(\int_{\mathcal{G}_r(\by)\setminus
			F}G_1(\e-\bs)\sqrt{V(\bs)}\,\mathrm{d}\bs\Big)^2\,\mu(
		\mathrm{d}\e).
	\end{eqnarray*}
	This estimate and condition \eqref{cndusecore} imply that
	condition \eqref{cndmolch1} of  Theorem \ref{thMazSh}  is
	fulfilled. Hence the spectrum of the operator $H$ is discrete and
	non-negative. Theorem \ref{thusecore} is proven.
\end{proof}

Let us prove some claims used in the proof of Theorem
\ref{thusecore}.


\begin{lemma}\label{lmC12eqWien}
(i)	For the capacity $C_{G_1,2}$, defined by
	\eqref{dfCgp}-\eqref{dfOmE} with $p=2$ and $g(\e)=G_1(\e)$
	and the harmonic capacity $\mathrm{cap}$ there exists $\;C>0$ such
	that for any compact set $E\subset\R^d$ $C_{G_1,2}(E)\ge
	C\,\mathrm{cap}(E)$;
	
	(ii) If $g\in L_p(\Omega)$ with $p>d$, then the function $f(\e)=G_1\star g(\e)$ is continuous and bounded in $\R^d$.
\end{lemma}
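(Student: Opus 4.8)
The plan is to treat the two parts separately, both following standard potential-theoretic arguments for Bessel kernels. For part (i), I would compare the Bessel capacity $C_{G_1,2}$ with the Riesz capacity associated to the Riesz kernel $I_1(\e)=c_d|\e|^{-(d-1)}$, and then relate the latter to the harmonic capacity $\mathrm{cap}$, which up to a constant is the Riesz capacity of order $1$ in the variational form \eqref{dfWincap}. The key pointwise input is that on a bounded region — and every compact $E$ can be assumed to lie in a fixed large ball since we only need the inequality for compacts, and capacity is monotone — one has $G_1(\e)\ge c\,I_1(\e)$ for $|\e|$ bounded, because $G_1$ has the same $|\e|^{-(d-1)}$ singularity at the origin as $I_1$ and both are positive; more precisely $G_1(\e)\sim c_d|\e|^{-(d-1)}$ as $\e\to 0$ and $G_1$ decays exponentially at infinity while $I_1$ decays polynomially, so after restricting to a ball $B_R$ containing $E$ we get $G_1\ge c(R)\,I_1$ on $B_R-B_R$. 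Feeding this into the definition \eqref{dfCgp}--\eqref{dfOmE}: if $(G_1\star h)\ge 1$ on $E$ then $(I_1\star (h/c(R)))\ge 1$ on $E$ (using $h\ge 0$, which we may assume since replacing $h$ by $|h|$ only decreases the $L_2$ norm and increases the convolution), so the Riesz-$1$ capacity of $E$ is $\le c(R)^{-2}C_{G_1,2}(E)$; and the Riesz-$1$, $p=2$ capacity coincides up to an absolute constant with $\mathrm{cap}$ (this is classical, e.g. \cite{AH}, \cite{Maz}). Chaining these gives $C_{G_1,2}(E)\ge C\,\mathrm{cap}(E)$ with $C$ depending only on $R$; but since the only use later is for compacts inside $\bar{\mathcal G}_r(\by)$ with $r\in(0,r_0)$ and $\by$ arbitrary, I must take $R=\infty$ and instead use the global comparison that the Bessel capacity $C_{G_1,2}$ dominates a constant times the Riesz capacity of order $1$ on all of $\R^d$ — this is \cite{AH}, Theorem 5.5.1 (or the remarks around Bessel vs. Riesz capacities), which gives an absolute constant. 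The main obstacle here is making sure the constant $C$ is genuinely independent of the set (in particular of its diameter / location), which forces using the sharp global Bessel–Riesz comparison rather than the easy localized one.

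For part (ii), the plan is a direct application of Hölder's inequality together with the integrability properties of $G_1$. One writes $f(\e)=\int_{\R^d}G_1(\e-\by)g(\by)\,\mathrm{d}\by$ with $g\in L_p(\Omega)$, $\Omega$ bounded, and $p>d$. Boundedness follows from $|f(\e)|\le\|g\|_{L_p}\|G_1\|_{L_{p'}(\e-\Omega)}\le\|g\|_{L_p}\|G_1\|_{L_{p'}(\R^d)}$, and $G_1\in L_{p'}(\R^d)$ for every $p'<d/(d-1)$ — equivalently for every $p>d$ — because $G_1(\e)\sim c_d|\e|^{-(d-1)}$ near $0$ (so $|G_1|^{p'}$ is integrable near $0$ iff $(d-1)p'<d$, i.e. $p'<d/(d-1)$) and $G_1$ decays exponentially at infinity (\cite{AH}, p.~10, the standard asymptotics of the Bessel kernel), so the tail is harmless. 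Continuity then follows from continuity of translation in $L_{p'}$: $|f(\e_1)-f(\e_2)|\le\|g\|_{L_p}\|\tau_{\e_1}G_1-\tau_{\e_2}G_1\|_{L_{p'}}\to 0$ as $\e_1\to\e_2$, since $G_1\in L_{p'}(\R^d)$ and translation is strongly continuous on $L_{p'}(\R^d)$ for $1\le p'<\infty$. This part is routine; the only care needed is recording the precise Bessel-kernel asymptotics, which are quoted from \cite{AH}.

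I expect part (i) to be where essentially all the work lies, specifically in securing a constant independent of the support of $E$; once the global Bessel–Riesz capacity comparison from \cite{AH} is invoked, the rest is bookkeeping. In the write-up I would state the needed asymptotics of $G_1$ up front, cite \cite{AH} for both the asymptotics and the Bessel–Riesz capacity comparison, cite \cite{Maz} (or \cite{AH}) for the identification of the Riesz-$1$, $p=2$ capacity with $\mathrm{cap}$ up to a constant, and then assemble the chain of inequalities for (i) and the two-line Hölder argument for (ii).
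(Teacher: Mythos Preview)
Your approach to both parts is correct, but differs from the paper's in a way worth noting.

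For (i), the paper does not pass through the Riesz capacity at all. Instead it invokes Proposition~2.3.13 of \cite{AH} to identify $C_{G_1,2}$ with the Bessel-potential-space capacity $C_{1,2}(E)=\inf\{\|u\|_{1,2}^2:u\in\mathcal{S},\,u\ge 1\text{ on }E\}$, then uses Calder\'on's theorem $L^{1,2}(\R^d)=W_2^1(\R^d)$ with equivalent norms, and finally the trivial inequality $\|u\|_{W_2^1}^2\ge\int|\nabla u|^2$ to land on $\mathrm{cap}(E)$. This gives an absolute constant in one line and avoids the detour you flagged: your pointwise kernel comparison $G_1\ge c\,I_1$ indeed fails globally (exponential vs.\ polynomial decay), so you are forced to cite a global Bessel--Riesz capacity comparison from \cite{AH} as a black box. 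That black box, when unpacked, is essentially the Calder\'on identification anyway, so the paper's route is both shorter and more transparent about where the uniform constant comes from.

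For (ii), the paper again leans on Calder\'on: $f=G_1\star g\in L^{1,p}=W_p^1(\R^d)$, and then the Sobolev embedding $W_p^1\hookrightarrow C_B^0$ for $p>d$ finishes. Your direct H\"older argument ($G_1\in L_{p'}$ for $p'<d/(d-1)$, plus continuity of translation) is equally valid and arguably more elementary; it avoids invoking the embedding theorem at the cost of checking the $L_{p'}$ integrability of $G_1$ by hand. Either is fine here.
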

\begin{proof}
(i)	By Proposition 2.3.13 from \cite{AH} (p.29), $C_{G_1,p}=C_{1,p}\;(p>1)$,
	where $C_{1,p}$ is defined in the following manner:
	\begin{equation*}
	C_{1,p}(E)=\inf\Big(\big\{\Vert
	u\Vert_{1,2p}^p\,:\;u\in\mathcal{S}, \;u\ge
	1\;\mathrm{on}\;E\big\}\Big).
	\end{equation*}
	Here $\mathcal{S}$ is Schwartz space and  $\Vert\cdot\Vert_{1,p}$
	is the norm in the Bessel potential space $L^{1,p}(\R^d)=\{f:\;f=G_1\star g,\;g\in L_p(\R^d)\}$, which is defined by $\Vert f\Vert_{1,p}=\Vert g\Vert_p$.
	By the Calderon's Theorem, $L^{1,p}(\R^d)$ coincides with the Sobolev space
	$W_p^1(\R^d)$ and the norm $\Vert\cdot\Vert_{1,p}$ is equivalent
	to the norm $\Vert\cdot\Vert_{W_p^1}$ (\cite{AH}, p. 13). These
	circumstances and definition \eqref{dfWincap} of the harmonic capacity
	imply that there is $C>0$ such that for any compact set
	$E\subset\R^d$ the estimate is valid for $p=2$:
	\begin{eqnarray*}
		\hskip-5mm&&C_{G_1,2}(E)\ge\\
		\hskip-5mm&&C\,\inf\Big(\big\{\Vert u\Vert_{W_2^1}^2=\int_{\R^d}\big(|\nabla
		u(\e)|^2+|u(\e)|^2\big)\, \mathrm{d}\e:\;u\in\mathcal{S}, \;u\ge
		1\;\mathrm{on}\;E\big\}\Big)\ge\\
		\hskip-5mm&&C\,\inf\Big(\big\{\int_{\R^d}|\nabla u(\e)|^2\,
		\mathrm{d}\e\,:\;u\in\mathcal{S}, \;u\ge
		1\;\mathrm{on}\;E\big\}\Big)=C\,\mathrm{cap}(E).
	\end{eqnarray*}
	Clain (i) is proven.
	
	(ii) By the arguments of the previous claim, $f\in W_p^1(\R^d)$.  Since $p>d$, then  by the well known Sobolev's theorem, the space $W_p^1(\R^d)$ is embedded continuously into the space $C_B^0(\R^d)$ of all continuous and bounded functions on $\R^d$ (\cite{AF}, Chapt. 4). Claim (ii) is proven,
	\end{proof}

The proof of following claim is
analogous to the first part of the proof of Theorem 2.4.1 from
\cite{AH} (p. 30):

\begin{lemma}\label{lminrchanginfsup}
	Suppose that $X$ and $Y$ are sets
	and $F:\,X\times Y\rightarrow\R$ is a function.
	Then the inequality is
	valid:
	\begin{equation}\label{infsup}
	\inf_{x\in X}\Big(\sup_{y\in Y} F(x,y)\Big)\ge\sup_{y\in
		Y}\Big(\inf_{x\in X}F(x,y)\Big).
	\end{equation}
	\begin{proof}
		It is clear that $\sup_{y\in Y} F(x,y)\ge F(x,w)$ for any $x\in X$
		and $w\in Y$. Hence $\inf_{x\in X}\Big(\sup_{y\in Y}
		F(x,y)\Big)\ge\inf_{x\in X}F(x,w)$ for any $w\in Y$. Therefore the
		desired inequality \eqref{infsup} is valid. The lemma is proven.
	\end{proof}
\end{lemma}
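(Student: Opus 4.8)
The plan is to derive the max--min inequality \eqref{infsup} directly from the defining order properties of infimum and supremum, imposing nothing on $X$, $Y$, or $F$ beyond what is stated; all infima and suprema are read in $[-\infty,+\infty]$, so no boundedness of $F$ is needed.

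First I would fix an arbitrary pair $(x_0,w)\in X\times Y$ and record the elementary sandwich
\[
\inf_{x\in X}F(x,w)\;\le\;F(x_0,w)\;\le\;\sup_{y\in Y}F(x_0,y),
\]
where the left inequality is the definition of the infimum of $x\mapsto F(x,w)$ and the right inequality is the definition of the supremum of $y\mapsto F(x_0,y)$. Dropping the middle term gives
\[
\inf_{x\in X}F(x,w)\;\le\;\sup_{y\in Y}F(x_0,y)\qquad(\forall\,x_0\in X,\ \forall\,w\in Y).
\]

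Next, since the left-hand side of this last inequality does not depend on $x_0$, I would take the infimum over $x_0\in X$ of the right-hand side, obtaining $\inf_{x\in X}F(x,w)\le\inf_{x_0\in X}\sup_{y\in Y}F(x_0,y)$ for every $w\in Y$. The right-hand side is now independent of $w$, so taking the supremum over $w\in Y$ of the left-hand side yields precisely \eqref{infsup}.

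I do not expect any genuine obstacle here: the argument is purely order-theoretic and uses neither compactness, continuity, nor convexity. The only point worth a sentence is that this is the ``easy half'' of the minimax relation --- the reverse inequality (hence equality) would require additional hypotheses and is not asserted. The same two-line pattern is exactly what is invoked in \eqref{estbelowintV} to pass $\inf_{F}$ inside $\sup_{\mu}$.
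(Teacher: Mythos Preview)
Your proof is correct and follows essentially the same route as the paper's: both start from the pointwise inequality $F(x_0,w)\le\sup_{y}F(x_0,y)$, then take $\inf_{x_0}$ and afterwards $\sup_{w}$. The only difference is cosmetic---you write the two-sided sandwich $\inf_x F(x,w)\le F(x_0,w)\le\sup_y F(x_0,y)$ in one line before taking limits, whereas the paper records only the right inequality first.
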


\subsection{Proof of Theorem \ref{thdoublrear}}
\begin{proof} Let us take $\theta>1$ and denote 
	\begin{equation*}
	\tilde\gamma(r)=\gamma(r)/\theta,\quad \sigma(r)=(1-\tilde\gamma(r))\mu(\mathcal{G}_r(0)).
	\end{equation*}
	It is clear that the function $\tilde\gamma(r)$ satisfies
	conditions \eqref{cndgammar}.
	 Since $\mathrm{BP}_{eq}(\by,r)\subseteq\mathrm{BP}(\by,r)$, we have:
	\begin{eqnarray}\label{ineqforsup}
	\hskip-10mm&&\sup_{\mu\,\in\,
		\mathrm{BP}(\by,r)}\;\inf_{F\in\mathcal{M}^\mu_{\tilde\gamma(r)}(\by,r)}\int_{\bar{\mathcal G}_r(\by)}\Big(\int_{\mathcal{G}_r(\by)\setminus
		F}G_1(\e-\bs)\sqrt{V(\bs)}\,\mathrm{d}\bs\Big)^2\mu( \mathrm{d}\e)\nonumber\ge\\
	\hskip-10mm&&\sup_{\mu\,\in\,
		\mathrm{BP}_{eq}(\by,r)}\;
	\inf_{F\in\mathcal{M}^\mu_{\tilde\gamma(r)}(\by,r)}\int_{\bar{\mathcal G}_r(\by)}
	\Big(\int_{\mathcal{G}_r(\by)\setminus
		F}G_1(\e-\bs)\sqrt{V(\bs)}\,\mathrm{d}\bs\Big)^2\mu(\mathrm{d}\e)
	\end{eqnarray}
	Let us take $\mu\in\mathrm{BP}_{eq}(\by,r)$. Then $\mu$ is equivalent to the Lebesgue measure and, in view of definition \eqref{defcore} of the base polyhedron, it is finite.  
	Since we assume that $V\in L_{p,\,loc}(\R^d)$ with
	$p>d/2$, then in view of claim (ii) of Lemma \ref{lmC12eqWien}, the function in the brackets under 
	integral of \eqref{ineqforsup} is continuous and bounded on the domain $\mathcal{G}_r(\by)$. hence this function is $\mu$-integrable in $\mathcal{G}_r(\by)$, i.e., the above integral is finite. Let us represent:
	\begin{eqnarray*}
		\hskip-10mm&&\int_{\mathcal{G}_r(\by)}\mu( \mathrm{d}\e)\Big(\int_{\mathcal{G}_r(\by)\setminus
			F}G_1(\e-\bs)\sqrt{V(\bs)}\,\mathrm{d}\bs\Big)^2=\\
		\hskip-10mm&&\int_{\mathcal{G}_r(\by)}\mu( \mathrm{d}\e)\int_{(\mathcal{G}_r(\by)\setminus
			F)\times(\mathcal{G}_r(\by)\setminus
			F)}G_1(\e-\bs)\sqrt{V(\bs)}
		G_1(\e-\bt)\sqrt{V(\bt)}\,\mathrm{d}\bs\,\mathrm{d}\bt.
		\nonumber
	\end{eqnarray*}
	It is easy to see that the function under integrals in the right hand side of the last formula is 
	$\mu\times\mathrm{mes}_{2d}$-measurable. Then by Fubini Theorem  (\cite{Hal}, Sec. 36, p. 147, Theorem B), we obtain that
\begin{eqnarray}\label{changord}
\hskip-10mm&&\int_{\mathcal{G}_r(\by)}\mu( \mathrm{d}\e)\Big(\int_{\mathcal{G}_r(\by)\setminus
	F}G_1(\e-\bs)\sqrt{V(\bs)}\,\mathrm{d}\bs\Big)^2=\nonumber\\
	\hskip-10mm&&\int_{(\mathcal{G}_r(\by)\setminus F)\times(\mathcal{G}_r(\by)\setminus
	F)}\sqrt{V(\bs)}\sqrt{V(\bt)}K_\mu(\bs,\bt)\,\mathrm{d}\bs\,\mathrm{d}\bt
\end{eqnarray}
and the function under the last integral belongs to the space $L_1(\mathcal{G}_r(\by)\times\mathcal{G}_r(\by))$.	
	Hence the function $X_\mu(\bs,\bt)$, defined by \eqref{dfWbsbt}, \eqref{dfKmubsbt}, belongs to the space\\ $L_1(\mathcal{G}_r(\by)\times\mathcal{G}_r(\by),\;\mu\times\mu)$.
	Therefore this function has the partial nonincreasing rearrangement $X_\mu^\star(\cdot,\bt)(u;\,\by,r;\,\mu)\in L_1(\mathcal{G}_r(\by),\,\mu)$ and the repeated nonincreasing rearrangement $(X_\mu^\star)^\star(u;\,\by,r;\,\mu)$ (Definition \ref{dfreprearr} and 
	Proposition \ref{prexistpartrear}). We have:
	\begin{equation*}
	\{E={\mathcal G}_r(\by)\setminus
	F:\;F\in\mathcal{M}_{\tilde\gamma(r)}^\mu(\by,r)\}\subseteq\mathcal{E}_{\sigma(r)}(\by,r) ,
	\end{equation*}
	where $\mathcal{E}_{\sigma(r)}(\by,r) =\mathcal{E}\big(\sigma(r),\,\mathcal{G}_r(\by),\,\mu\big)$.
	Recall that the collection $\mathcal{E}(t,X,\,\mu)$ is defined  in the formulation of Problem \ref{extrprobl} (in our case $\Sigma=\Sigma_L({\mathcal G}_r(\by))$).  Since the Lebesgue measure $\mathrm{mes}_d$ is is non-atomic and finite in ${\mathcal G}_r(\by)$, and each measure from $\mathrm{BP}_{eq}(\by,r)$ is absolute continuous with respect to $\mathrm{mes}_d$, then $\mathrm{BP}_{eq}(\by,r)$ consists 
	of non-atomic measures (\cite{John}, Theorem 2,4).
	We have: 
	\begin{eqnarray}\label{estinf}
		&&\inf_{F\in\mathcal{M}^\mu_{\tilde\gamma(r)}(\by,r)}\int_{(\mathcal{G}_r(\by)\setminus
			F)\times(\mathcal{G}_r(\by)\setminus
			F)}\sqrt{V(\bs)}\sqrt{V(\bt)}K_\mu(\bs,\bt)\,\mathrm{d}\bs\,\mathrm{d}\bt\ge\nonumber\\
		&&\inf_{E\in\mathcal{E}_{\sigma(r)}(\by,r) }\int_{E\times E}\sqrt{V(\bs)}\sqrt{V(\bt)}K_\mu(\bs,\bt)\,\mathrm{d}\bs\,\mathrm{d}\bt\ge\nonumber\\
		&&\inf_{E\in\mathcal{E}_{\sigma(r)}(\by,r)}\;\inf_{G\in\mathcal{E}_{\sigma(r)}(\by,r) }\int_{E}\,\Big(\int_{G}X_\mu(\bs,\bt)\,\mu(\mathrm{d}\bs)\Big)\,\mu(\mathrm{d}\bt).
	\end{eqnarray}
Lemma \ref{lmpontwisesupinf} implies that in the Riesz space $M_\mu({\mathcal G}_r(\by))$ (defined in Section \ref{sec:C}) there exists the lattice infimum $\bigwedge\Big(\{\int_{G}X_\mu(\bs,\bt)\,\mu(\mathrm{d}\bs)\}_{G\in\mathcal{E}_{\sigma(r)}(\by,r)}\Big)$, it belongs to $L_1({\mathcal G}_r(\by),\mu)$ and for any $E\in\Sigma_L({\mathcal G}_r(\by))$
\begin{eqnarray}\label{estbylatinf}
&&\inf_{G\in\mathcal{E}_{\sigma(r)}(\by,r) }\int_{E}\,\Big(\int_{G}X_\mu(\bs,\bt)\,\mu(\mathrm{d}\bs)\Big)\,\mu(\mathrm{d}\bt)\ge\nonumber\\
&&\int_{E}\,\Big(\inf_{G\in\mathcal{E}_{\sigma(r)}(\by,r) }\int_{G}X_\mu(\bs,\bt)\,\mu(\mathrm{d}\bs)\Big)\,\mu(\mathrm{d}\bt).
\end{eqnarray}
Using claim (ii) of Proposition \ref{prexistpartrear} and applying twice Proposition \ref{prsolextrprob} and estimate \eqref{estJR1} (Propositio \ref{lmJR}), we get for $\mu\,\in\,
\mathrm{BP}_{eq}(\by,r)$:
	\begin{eqnarray*}
		&&\inf_{E\in\mathcal{E}_{\sigma(r)}(\by,r) }\int_{E}\,\Big(\inf_{G\in\mathcal{E}_{\sigma(r)}(\by,r) }\int_{G}X_\mu(\bs,\bt)\,\mu(\mathrm{d}\bs)\Big)\,\mu(\mathrm{d}\bt)=\\
		&&\inf_{E\in\mathcal{E}_{\sigma(r)}(\by,r) }\int_{E}J_{X_\mu(\bs,\bt)}(\sigma(r),\by,r,\mu)\,\mu(\mathrm{d}\bt)\ge\\
		&&\frac{(\theta-1)\psi_\mu(r)}{\theta}
		\inf_{E\in\mathcal{E}_{\sigma(r)}(\by,r) }
		\int_{E}X_\mu^\star(\cdot,\bt)(\psi_\mu(r);\, \by,r,\nu)\,\mu(\mathrm{d}\bt)=\\
		&&\frac{(\theta-1)\psi_\mu(r)}{\theta}J_{X_\mu^\star(\cdot,\bt)}(\sigma(r);\,\by,r;\,\mu)\ge\\
		&&\Big(\frac{(\theta-1)\psi_\mu(r)}{\theta}\Big)^2
		(X_\mu^\star)^\star(\psi_\mu(r);\,\by,r;\,\mu),
	\end{eqnarray*}
	where $\psi_\mu(r)=\gamma(r))\mu(\mathcal{G}_r(0))$. This estimate, inequality \eqref{ineqforsup}, equality \eqref{changord}, estimates \eqref{estinf} and \eqref{estbylatinf}, condition 
	\eqref{cnddoublrear} and Theorem \ref{thusecore} imply the desired claim. Theorem \ref{thdoublrear} is proven. 
\end{proof}	

\subsection{Proof of Corollary \ref{crdoublrear}}

\begin{proof}
	 The inclusion  $M_f(\by,r)\subseteq BP_{eq}(\by,r)$ (Proposition \ref{lmdescribedens}) implies:
	\begin{equation*} 
	\sup_{\mu\,\in\,
		\mathrm{BP}_{eq}(\by,r)}\;(X_\mu^\star)^\star(\psi(r);\,\by,r;\,\mu)\ge \sup_{\mu\,\in\,
		M_f(\by,r)}\;(X_\mu^\star)^\star(\psi(r);\,\by,r;\,\mu).
	\end{equation*}
	Then in view of Theorem \ref{thdoublrear}. we obtain the desired claim.
\end{proof}

\subsection{Proof of Theorem \ref{thdoubrearrleb}}

\begin{proof}
	Let us take a ball $B_r(\by)$. 
	Consider the function $f(t)=t^{(d-2)/d}$ and the absolutely continuous measure $\mu_s$ on $B_r(\by)$, defined by \eqref{dfmusnewest}, \eqref{dfP1newest} and \eqref{dfmdr}. As we have noticed in the proof of Theorem \ref{thcondlebesgue},
	this measure belongs to $M_f(r,\by)$. 
	Consider the function 
	\begin{eqnarray}\label{dfZmusnewestleb}
	&&X_{\mu_s}(\e,\bt)=\sqrt{V(\e)}\sqrt{V(\bt)}
	\frac{\mathrm{d}\mathrm{mes}_d}{\mathrm{d}\mu_s}(\e)\frac{\mathrm{d}\mathrm{mes}_d}{\mathrm{d}\mu_s}(\bt)K_{\mu_s}(\e,\bt)=\nonumber\\
	&&\sqrt{V(\e)}\sqrt{V(\bt)}\big(f^\prime(s_{r,\by}(\e))\big)^{-1}\big(f^\prime(s_{r,\by}(\bt))\big)^{-1}
	K_{\mu_s}(\e,\bt)=\\
	&&d^2/(d-2)^2\sqrt{V(\e)}\sqrt{V(\bt)}\big(s_{r,\by}(\e)\big)^{2/d}\big(s_{r,\by}(\bt)\big)^{2/d} K_{\mu_s}(\e,\bt).\nonumber
	\end{eqnarray}
	Recall that the function $K_{\mu_s}(\e,\bt)$ is defined by \eqref{dfKmubsbt} with $\mu=\mu_s$. Using Lemma \ref{lmineqforrearnewest} with $t=\gamma(\tilde r)\;(\tilde r=r/\sqrt{d})$,
	and taking  
	\begin{equation*}
	W(\e)=W_{\mu_s}(\e,\bt)=\sqrt{V(\e)}\sqrt{V(\bt)}\big(f^\prime(s_{r,\by}(\e))\big)^{-1}
	K_{\mu_s}(\e,\bt)
	\end{equation*}
	for a fixed $\bt\in B_r(\by)$, we get that for some $\kappa,\delta\in(0,1)$ 
	there is a cube $Q_{\tilde r}(\tilde\by)\subseteq B_r(\by)\cap Q_1(\vec l(\by,r))$ 
	such that 
	\begin{eqnarray}\label{ineqforrearV1newestleb}
	&& X^\star_{\mu_s}(\cdot,\bt)\big(\bar\gamma(r)\cdot\mu_s(B_r(\by));\,B_r(\by);\,\mu_s\big)\ge
	\nonumber\\
	&&\delta\cdot W^\star_{\mu_s}(\cdot,\bt)\big(\gamma(\tilde r)
	\mathrm{mes}_d(Q_{\tilde r}(0));\,Q_{\tilde r}(\tilde\by)\big),
	\end{eqnarray}
	where $\bar\gamma(r)=\kappa\gamma(r/\sqrt{d})$> 
	Applying 
	Lemma \ref{lmcomparrear} and using 
	again Lemma \ref{lmineqforrearnewest} 
	with
	\begin{equation*}
	W(\bt)=W^\star_{\mu_s}(\cdot,\bt)\big(\hat\gamma(\tilde r,K)
	\mathrm{mes}_d(Q_{\tilde r}(0));\,Q_{\tilde r}(\tilde\by)\big),
	\end{equation*} 
	we get from \eqref{ineqforrearV1newestleb}:
	\begin{eqnarray}\label{ineqforrearV1newestnewleb}
	\hskip-5mm&& (X^\star_{\mu_s})^\star\big(\bar\gamma(r)\cdot\mu_s(B_r(\by));\,B_r(\by);\,\mu_s\big)\ge
	\nonumber\\
	\hskip-5mm&&\delta
	 (W^\star_{\mu_s})^\star\big(\gamma(\tilde r)\mathrm{mes}_d(Q_{\tilde r})(\tilde\by),\bar\gamma(r)\cdot\mu_s(B_r(\by));\;Q_{\tilde r})(\tilde\by),\, B_r(\by);\;\mathrm{mes}_d,\,\mu_s\big)\ge\nonumber\\
	\hskip-5mm&&\delta^2
	(Y_{\mu_s}^\star)^\star\big(\gamma(\tilde r)
	\mathrm{mes}_d(Q_{\tilde r}(0));\,Q_{\tilde r}(\tilde\by)\big),
	\end{eqnarray}
	where
	\begin{eqnarray*}
		&&Y_{\mu_s}(\e,\bt)=\sqrt{V(\e)}\sqrt{V(\bt)}\int_{B_r(\by)}G_1(\e-\bs)G_1(\bs-\bt)\,\mu_s(\mathrm{d}\bs)=\\
		&&\sqrt{V(\e)}\sqrt{V(\bt)}\int_{B_r(\by)}G_1(\e-\bs)G_1(\bs-\bt)f^\prime(s_{r,\by}(\bs))\,
		\mathrm{d}\bs
	\end{eqnarray*}
	Let us notice that  $\min_{t\in[0,1]}f^\prime(t)=(d-2)/d$. 
	Hence in view of Proposition \ref{estconvBeskern},
	$Y_{\mu_s}(\e,\bt)\ge \big((d-2)/d\big)A(r_0) Y(\e,\bt)$ for any $\e,\,\bt\in B_r(\by)$.
	Hence by double use of Lemma \ref{lmcomparrear} we get:
	\begin{eqnarray*}
	&&(Y_{\mu_s}^\star)^\star\big(\gamma(\tilde r)
	\mathrm{mes}_d(Q_{\tilde r}(0));\,Q_{\tilde r}(\tilde\by)\big)\ge\\ 
	&&\big((d-2)/d\big)A(r_0)(Y^\star)^\star\big(\gamma(\tilde r)
	\mathrm{mes}_d(Q_{\tilde r}(0));\,Q_{\tilde r}(\tilde\by)\big).
	\end{eqnarray*}
This inequality, estimate \eqref{ineqforrearV1newestnewleb}, condition \eqref{cnddoubrearrleb}, inclusion
$\mu_s\in M_f(r,\by)$ and Corollary \ref{crdoublrear} imply the desired claim. Theorem \ref{thdoubrearrleb} is proven
\end{proof}

In the proof of Theorem \ref{thdoubrearrleb} we have used the following lemma:

\begin{lemma}\label{lmcomparrear}
	Let $(X,\Sigma,\mu)$ be a measure space. 
	
	(i) If $f(x)$is a non-negative $\mu$-measurable 
	function on $X$ and $C>0$ is constant, then for any $t>0$ $(Cf)^\star(t;\,X;\mu)=C f^\star(t;\,X;\,\mu)$. 
	
	(ii) If $f(x)$, $g(x)$ are non-negative $\mu$-measurable 
	functions on $X$ such that $f(x)\ge g(x)$ for any $x\in X$, then for any $t>0$ $f^\star(t,X,\mu)\ge g^\star(t,X,\mu)$,
\end{lemma}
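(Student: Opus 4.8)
The plan is to argue directly from the definition \eqref{dfSVyrdel} of the nonincreasing rearrangement together with the superlevel sets \eqref{dfLVsry} and their $\mu$-measures $\lambda^\star$; both assertions reduce to elementary manipulations of these quantities. No compactness, measure-theoretic subtlety, or result from earlier in the paper is needed beyond monotonicity of $\mu$.

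For claim (i), first I would record the scaling of the superlevel sets: for $C>0$ and $s>0$,
\begin{equation*}
\mathcal{L}^\star(s;\,Cf;\,X)=\{x\in X:\,Cf(x)\ge s\}=\{x\in X:\,f(x)\ge s/C\}=\mathcal{L}^\star(s/C;\,f;\,X),
\end{equation*}
hence $\lambda^\star(s;\,Cf;\,X;\,\mu)=\lambda^\star(s/C;\,f;\,X;\,\mu)$. Substituting this into \eqref{dfSVyrdel} and making the change of variable $u=s/C$ inside the supremum gives
\begin{equation*}
(Cf)^\star(t;\,X;\,\mu)=\sup\{s>0:\,\lambda^\star(s/C;\,f;\,X;\,\mu)\ge t\}=\sup\{Cu:\,u>0,\;\lambda^\star(u;\,f;\,X;\,\mu)\ge t\}=C\,f^\star(t;\,X;\,\mu),
\end{equation*}
where the last step uses $C>0$ to pull the positive constant out of the supremum.

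For claim (ii), since $f(x)\ge g(x)$ for every $x\in X$ we have the inclusion $\mathcal{L}^\star(s;\,g;\,X)\subseteq\mathcal{L}^\star(s;\,f;\,X)$ for each $s>0$, so by monotonicity of $\mu$, $\lambda^\star(s;\,g;\,X;\,\mu)\le\lambda^\star(s;\,f;\,X;\,\mu)$. Therefore $\{s>0:\,\lambda^\star(s;\,g;\,X;\,\mu)\ge t\}\subseteq\{s>0:\,\lambda^\star(s;\,f;\,X;\,\mu)\ge t\}$, and taking suprema yields $g^\star(t;\,X;\,\mu)\le f^\star(t;\,X;\,\mu)$. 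The only point that needs a word of care — and the closest thing to an ``obstacle'' — is the degenerate case in which the set defining the supremum is empty (for instance when $t>\mu(X)$): this is handled by the standing convention $\sup\emptyset=0$, under which both identities in (i) and (ii) continue to hold trivially.
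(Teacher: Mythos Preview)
Your proof is correct and follows essentially the same approach as the paper: both parts proceed directly from the definitions \eqref{dfSVyrdel}--\eqref{dfLVsry} by computing the effect on the superlevel sets $\mathcal{L}^\star$, passing to $\lambda^\star$, and then taking the supremum (with the change of variable $u=s/C$ in part (i) and the inclusion argument in part (ii)). Your added remark on the degenerate case $\sup\emptyset=0$ is a harmless clarification not present in the paper.
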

\begin{proof} (i) We have by definitions \eqref{dfSVyrdel}-\eqref{dfLVsry}:
	\begin{equation*}
	\mathcal{L}^\star(s,Cf,X;\,\mu)=\mathcal{L}^\star(s/C,f;\,X;\,\mu)\quad (s>0),
	\end{equation*}
	hence $\lambda^\star(s;\,Cf;\,X;\,\mu)=\lambda^\star(s/C;f;\,X;\,\mu)$. Therefore for $t>0$
	\begin{eqnarray*}
		&&(Cf)^\star(t;\,X;\,\mu)=\sup\{s>0:\;\lambda^\star(s/C,f,X,\mu)\ge t\}=\\
		&&\sup\{Cu>0:\;\lambda^\star(u,f,X,\mu)\ge t\}=C f^\star(t,X,\mu),
	\end{eqnarray*}
	i.e., the desired equality is valid.
	
	(ii) Since $g(x)\ge s>0$ implies $f(x)\ge s$, we obtain  that $\mathcal{L}^\star(s;\,g;\,X;\,\mu)\subseteq\mathcal{L}^\star(s;\,f;\,X;\,\mu)$, 
	hence  $\lambda^\star(s;\,g;\,X;\,\mu)\le\lambda^\star(s;\,f;\,X;\,\mu)$ and hence the desired inequality is valid.
\end{proof}

\subsection{Proof of Proposition \ref{prrellebesdoublebes}}

\begin{proof}
	Suppose that the conditions of Theorem \ref{thcondlebesgue} are satisfied. Consider the function $Y(\e,\bt)$, defined by \eqref{dfYmust} and estimate it from below for $\e,\bt\in Q_r(\by)$:
	$Y(\e,\bt)\ge (\sqrt{d}r)|^{(2-d)}\sqrt{V(\e)}\sqrt{V(\bt)}$. Hence using Lemma \ref{lmcomparrear} twice and Lemma \ref{lmrerrofpow}, we get:
	\begin{eqnarray*}
	&&(Y^\star)^\star\big(\psi(r);\,Q_r(\by))\big)\ge (\sqrt{d}r)|^{(2-d)}\Big((\sqrt{V})^\star\big(\psi(r);\,Q_r(\by))\big)\Big)^2=\\ &&(\sqrt{d}r)|^{(2-d)}V^\star\big(\psi(r);\,Q_r(\by))\big).
	\end{eqnarray*}
	This estimate and condition \eqref{cndlebesgue} imply that condition \eqref{cnddoubrearrleb} of Theorem \ref{thdoubrearrleb} is fulfilled. Proposition \ref{prrellebesdoublebes} is proven. 
\end{proof}

In the proof of Proposition \ref{prrellebesdoublebes} we have used the following claim:
\begin{lemma}\label{lmrerrofpow}
	Suppose that $\Omega$ is a domain in $\R^d$ and
	$f:\,\Omega\rightarrow\C$ is a measurable nonnegative function.  
	If $\alpha>0$, the
	equality is valid for any $t> 0$:
	$(f^\alpha)^\star(t)=(f^\star(t))^\alpha$.
\end{lemma}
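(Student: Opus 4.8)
For $\Omega\subseteq\R^d$, $f:\Omega\to\C$ measurable and nonnegative, and $\alpha>0$, one has $(f^\alpha)^\star(t)=(f^\star(t))^\alpha$ for every $t>0$.

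The plan is to argue directly from the definition \eqref{dfSVyrdel}--\eqref{dfLVsry} of the nonincreasing rearrangement, reducing everything to a single observation about the superlevel sets. The only structural fact I need is that the map $u\mapsto u^\alpha$ is a strictly increasing bijection of $[0,\infty)$ onto itself (with inverse $v\mapsto v^{1/\alpha}$), so that for $s>0$ the condition $f(x)^\alpha\ge s$ is equivalent to $f(x)\ge s^{1/\alpha}$. Consequently the superlevel sets satisfy $\mathcal{L}^\star(s;\,f^\alpha;\,\Omega)=\{x:\,f(x)^\alpha\ge s\}=\{x:\,f(x)\ge s^{1/\alpha}\}=\mathcal{L}^\star(s^{1/\alpha};\,f;\,\Omega)$, and hence, applying Lebesgue measure, $\lambda^\star(s;\,f^\alpha;\,\Omega)=\lambda^\star(s^{1/\alpha};\,f;\,\Omega)$ for all $s>0$.

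With this identity in hand I substitute into the definition of $(f^\alpha)^\star(t)$. For fixed $t>0$,
\begin{eqnarray*}
(f^\alpha)^\star(t)&=&\sup\{s>0:\,\lambda^\star(s;\,f^\alpha;\,\Omega)\ge t\}\\
&=&\sup\{s>0:\,\lambda^\star(s^{1/\alpha};\,f;\,\Omega)\ge t\}.
\end{eqnarray*}
Now I perform the change of variable $u=s^{1/\alpha}$, i.e.\ $s=u^\alpha$. Since $s\mapsto s^{1/\alpha}$ is an increasing bijection of $(0,\infty)$ onto itself, the set $\{s>0:\,\lambda^\star(s^{1/\alpha};\,f;\,\Omega)\ge t\}$ is exactly the image under $u\mapsto u^\alpha$ of the set $\{u>0:\,\lambda^\star(u;\,f;\,\Omega)\ge t\}$, and taking suprema through the increasing continuous map $u\mapsto u^\alpha$ commutes with the supremum. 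Therefore
\[
(f^\alpha)^\star(t)=\sup\{u^\alpha:\,u>0,\ \lambda^\star(u;\,f;\,\Omega)\ge t\}=\Big(\sup\{u>0:\,\lambda^\star(u;\,f;\,\Omega)\ge t\}\Big)^\alpha=\big(f^\star(t)\big)^\alpha,
\]
which is the asserted equality.

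There is essentially no deep obstacle here; the whole content is the bookkeeping that a strictly increasing bijection of $[0,\infty)$ commutes with superlevel sets and with suprema. The only minor point to be careful about is the boundary behaviour at $s=0$ (excluded in the $\sup$ anyway) and the degenerate cases where the sets over which one takes the supremum are empty or the supremum is $+\infty$; in both of those cases the identity $(+\infty)^\alpha=+\infty$ and $\sup\emptyset$ being interpreted consistently on the two sides makes the equality hold trivially, so I would simply remark that the argument above is valid with the usual conventions for $\sup$ in $[0,+\infty]$.
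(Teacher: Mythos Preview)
Your proof is correct and follows essentially the same route as the paper's own argument: both identify the superlevel sets via $\{f^\alpha\ge s\}=\{f\ge s^{1/\alpha}\}$, deduce $\lambda^\star(s;f^\alpha)=\lambda^\star(s^{1/\alpha};f)$, and then substitute $u=s^{1/\alpha}$ in the defining supremum. The only difference is that you spell out explicitly why the supremum commutes with $u\mapsto u^\alpha$ and comment on the degenerate cases, which the paper leaves implicit.
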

\begin{proof}
	 We have:
	\begin{eqnarray*}
		&&\hskip-5mm\lambda^\star(s)=\mathrm{mes}_d\Big(\{\e\in\Omega:\,(f(\e))^\alpha\ge s\}\Big)=\mathrm{mes}_d\Big(\{\e\in\Omega:\,f(\e)\ge s^{1/\alpha}\}\Big)=\\
		&&\hskip-5mm\lambda^\star(s^{1/\alpha}),
	\end{eqnarray*}
	hence
	\begin{eqnarray*}
		(f^\alpha)^\star(t)=\sup\{s> 0:\,\lambda^\star(s^{1/\alpha})\ge t\}=\sup\{u^\alpha
		:\,\lambda^\star(u)\ge t\}=(f^\star(t))^\alpha.
	\end{eqnarray*}
\end{proof}

\subsection{Proof of Theoren \ref{thlogmdensdoubrearr}}

\begin{proof}
	Let us take a cube $Q_r(\by)$.  It is clear that there 
	are $\vec l(\by,r)\in\Z^d$ and a cube $Q_{r/2}(\tilde\by)\subseteq Q_r(\by)\cap Q_1(\vec l(\by,r))$. By Lemma \ref{prestbarVstarbelownewest} with $W(\e)=Y(\e,\bt)$ for a fixed $\bt\in Q_{r/2}(\tilde\by)$ and Lemma \ref{lminclrear2}, there are $j\in\{1,2,\dots,n\}$ with 
	\begin{equation}\label{dfnlogtildr}
	n=\big[\log_m\big(\frac{2}{\theta r}\big)\big]+2
	\end{equation}
	and $K\in(0,1)$, such that 
	\begin{eqnarray}\label{ineqforrearV1newest}
\hskip-5mm&& Y^\star(\cdot,\bt)\big(\bar\gamma(r)\cdot\mathrm{mes}_d(Q_r(0));\,Q_r(\by)\big)\ge\nonumber\\ 
\hskip-5mm&&Y^\star(\cdot,\bt)\big(\hat\gamma(r/2,K)\cdot\mathrm{mes}_d(Q_{r/2}(0));\,Q_{r/2}(\tilde\by)\big)\ge
	\nonumber\\
\hskip-5mm&&\min_{\vec\xi:\,Q(\vec\xi,\,n)\subseteq F_j} Y^\star(\cdot,\bt)\big(\psi(m,n);\,Q(\vec\xi,n)\big),
	\end{eqnarray}
	where 
	$\hat\gamma(\rho,K)=K\gamma(\theta \rho/m^2)\theta^d$,
\begin{equation*}
\psi(m,n)=\gamma(m^{-n})\mathrm{mes}_d(Q(\vec 0,n)),\quad \bar\gamma(r)=2^{-d}\hat\gamma(r/2,\,K)
\end{equation*} 
 and $F_j$ is a non-empty union of cubes $Q(\vec\xi,\,n)$ with $\vec\xi\in m^{-n}\cdot\Z^d$, such that $F_j\subseteq Q_{r/2}(\tilde\by)\cap D_j(\vec l(\by,r))$. Notice that since the function $\gamma(r)$ satisfies conditions \eqref{cndgammar}, the function $\bar\gamma(r)$ satisfies this condition too for some $r_0>0$.
	Applying Lemma \ref{lmcomparrear} and using again Lemma  \ref{prestbarVstarbelownewest}  with
	\begin{equation*}
	W(\bt)=Y^\star(\cdot,\bt)\big(\psi(m,n);\,Q(\vec\xi,n)\big)
	\end{equation*} 
and \ref{lminclrear2},  we get from \eqref{ineqforrearV1newest}:
	\begin{eqnarray}\label{ineqforrearV1newestnew}
	\hskip-5mm&& (Y^\star)^\star\big(\bar\gamma(r)\cdot\mathrm{mes}_d(Q_r(\by));\,Q_r(\by)\big)\ge\nonumber
	\\
	\hskip-5mm&&\min_{\vec\xi:'\,Q(\vec\xi,\,n)\subseteq F_j} (Y^\star)^\star\big(\psi(m,n),\bar\gamma(r)\cdot\mathrm{mes}_d(Q_r(0));\,Q(\vec\xi,n),\,Q_r(\by\big)\ge\nonumber\\
	\hskip-5mm&&\min_{\vec\xi:\,Q(\vec\xi,\,n)\subseteq F_j} (Y_{\mu_s}^\star)^\star\big(\psi(m,n)
	,\hat\gamma(r/2,K)
	\mathrm{mes}_d(Q_{ r/2}(0));\,Q(\vec\xi,n),\,Q_{r/2}(\tilde\by)\big)\ge\nonumber\\
	\hskip-5mm&&\min_{\vec\xi,\eta:\,Q(\vec\xi,\,n)\subseteq F_j,\,Q(\vec\eta,\,n)\subseteq F_j}(Y_{\mu_s}^\star)^\star\big(\psi(m,n);\,Q(\vec\xi,n),\,Q(\vec\eta,n)\big).
	\end{eqnarray}
Let us notice that by 
	\eqref{dfnlogtildr}, $r\le\frac{2}{\theta}m^{-(n-2)}$. Hence if $Q(\vec\xi,\,n)\subseteq F_j$ and $Q(\vec\eta,\,n)\subseteq F_j$, then, since $F_j\subseteq Q_{r/2}(\tilde\by)$,  we have: $|\vec\xi-\vec\eta|\le \frac{\sqrt{d}}{\theta}m^{-(n-2)}$. Therefore 
	in view of condition \eqref{Xinonempty2}, estimate \eqref{ineqforrearV1newestnew}  implies:
	\begin{eqnarray*}
		&&\hskip-12mm (Y^\star)^\star\big(\bar\gamma(r)\cdot\mathrm{mes}_d(Q_r(\by));\,Q_r(\by)\big)\ge\nonumber\\
		&&\hskip-12mm \min_{\vec\xi,\,\vec\eta\in\bigcup_{j=1}^n\Xi_{n}(\vec l,\,j),\;|\vec\xi-\vec\eta|\le\frac{\sqrt{d}}{\theta}m^{-(n-2)}} (Y^\star)^\star\big(\psi(m,n);\,Q(\vec\xi,n),\,Q(\vec\eta,n)\big).
	\end{eqnarray*}  
	Since $Q_r(\by)\cap Q_1(\vec l(\by,r))\neq\emptyset$, this estimate and 
	condition \eqref{cndlogdensdoubrearr} imply that condition \eqref{cnddoubrearrleb}
	of Theorem \ref{thdoubrearrleb} is satisfied with $\gamma(r)=\bar\gamma(r)$. 
	Then the spectrum of the operator $H=-\Delta+V(\e)\cdot$ is discrete. Theorem \ref{thlogmdensdoubrearr} is proven. 
\end{proof}

\section{Some examples} \label{sec:examples}
\setcounter{equation}{0}
First of all, consider some examples of the $(\log_m,\,\theta)$-dense system (see Definition \ref{dfdenslogmtetpart1}).

\begin{example}\label{ex1}
	Consider the classical middle third Cantor set $\mathcal{C}\subset[0,1]$, Let $I_{n,k}\;(n=1,2,\dots),\,k=1,2,\dots, 2^{n-1}$ be the closures 
	of intervals adjacent to $\mathcal{C}$. It is known that they are disjoint and for any fixed $n$ and each $k\in\{1,2,\dots, 2^{n-1}\}$ 
	$\mathrm{mes}_1(I_{n,k})=3^{-n}$. For fixed $n$ we shall number the intervals $I_{n,k}$ from the left to the right. Denote $D_n=\bigcup_{k=1}^{2^{n-1}}I_{n,k}$. 
	As we have shown in \cite{Zel1} (Example 5.1),  the sequence $\{D_n\}_{n=1}^\infty$ forms 
	in $[0,1]$ a $(\log_3,\,1/9)$-dense system. 
\end{example}

\begin{example}\label{ex2}
	Consider a cube $Q_1(\by)\subset\R^d$, represented in the form $Q_1(\by)=Q_1(\by_1)\times Q_1(\by_2)$, where 
	$Q_1(\by_1)\subset\R^{d_1}$ and $Q_1(\by_2)\subset\R^{d_2}$. Let $\{D_n\}_{n=1}^\infty$ be a sequence of subsets of 
	the cube $Q_1(\by_1)$, forming in it a $(\log_m,\theta)$-dense system. It is easy to see that the sequence
	$\{D_n\times Q_1(\by_2)\}_{n=1}^\infty$ forms in $Q_1(\by)$ a $(\log_m,\theta)$-dense system too.	
\end{example}

Now we shall construct a counterexample connected with conditions of discreteness of the spectrum
of the operator $H=-\Delta+V(\e)\cdot$, obtained above.

\begin{example}\label{exdoubrearlebes}
	By Proposition \ref{prlogmdensdoubrearr}. if  $V\in L_{p,\,loc}(\R^d)$ with
	$p>d/2$, then Theorem \ref{thcondlebesgue} implies Theorem \ref{thdoubrearrleb}.
	On the other hand, we see from the proof of Theorem \ref{thlogmdensdoubrearr} that it follows from Theorem \ref{thdoubrearrleb}.
	We shall construct an example of the potential $V\in L_{\infty,\,loc}(\R^d)$, for which
	the conditions of Theorem \ref{thlogmdensdoubrearr} are satisfied (hence the
	spectrum of the operator $H=-\Delta+V(\e)\cdot$ is discrete), but
	condition \eqref{cndlebesgue} of Theorem \ref{thcondlebesgue} is not satisfied for it.
	Let us return to the sequence $\{D_n\}_{n=1}^\infty$ of subsets of the interval $[0,\,1]$ and considered in Example \ref{ex1}, and the following 
	sequence of subsets of the cube $Q_1(0)$:
	\begin{equation}\label{dfcalDnew}
	\mathcal{D}_n=D_n\times[0,\,1]^{d-1}.
	\end{equation} 
	Consider also the translations of the cube $Q_1(0)$ and the sets
	$\mathcal{D}_n$ by the vectors $\vec l=(l_1,l_2,\dots,l_d)\in\Z^d$: $Q_1(\vec l)=Q_1(0)+\vec l$, 
	\begin{equation}\label{dfcalDveclnew}
	\mathcal{D}_n(\vec l)=\mathcal{D}_n+\vec l.
	\end{equation}
	The arguments of Examples \ref{ex1} and \ref{ex2} imply that for any fixed $\vec l\in\Z^d$  the sequence $\{\mathcal{D}_n(\vec l)\}_{n=1}^\infty$ 
	forms in $Q_1(\vec l)$ a $(\log_3,\,1/9)$-dense system.  For $\beta\in[0,1]$ consider on $\R$ the
	$1$-periodic function $\theta_\beta(x)$, defined on the interval
	$[0,1]$ in the following manner:
	\begin{equation}\label{dfthatbetnewest}
	\theta_\beta(x)=\left\{\begin{array}{ll}
	1&\quad\mathrm{for}\quad x\in(0,\beta],\\
	0&\quad\mathrm{for}\quad x\in(\beta,1].
	\end{array}\right.
	\end{equation}
	Let us take
	\begin{equation}\label{alphainnew}
	\alpha\in\Big(0,\,2\Big).
	\end{equation} 
	Consider the following function, defined on $(0,1]$:
	\begin{eqnarray}\label{dfSigmaaNalpdoub}
	&&\Sigma_{N,\,p,\alpha}(x):=\\
	&&\left\{\begin{array}{ll}
	0&\quad\mathrm{for}\quad x\in[0,\,1]\setminus\bigcup_{n=1}^\infty D_n,\\
	N\theta_\beta(3^px)\vert_{\beta=3^{-\alpha n}}C(\theta)3^{-n(d-2)}&\quad\mathrm{for}\quad x\in
	D_n\;(n=1,2,\dots),
	\end{array}\right.\nonumber
	\end{eqnarray}
	where $N>0,\,p\in\N$ and $C(\theta)=\Big(2\sqrt{d}(1+9/d)\Big)^{d-2}$. Recall that I denote by  $P_1$ the operator, defined  by \eqref{dfP1newest}.
	Consider a function
	$\mathcal{N}:\Z^d\rightarrow\R_+$, satisfying the conditions
	\begin{equation}\label{cndNl1newest}
	\mathcal{N}(\vec l)\ge 1,\quad\mathcal{N}(\vec
	l)\rightarrow\infty\quad\mathrm{for}\quad |\vec
	l|_\infty\rightarrow\infty,
	\end{equation}
	\begin{equation}\label{growthNl}
	\sup_{\vec l\,\in\,\Z^d}\frac{\mathcal{N}(\vec
		l)}{3^{|\vec
			l|_\infty(d-2)}}<\infty
	\end{equation}
	where $|\vec l|_\infty=\max_{1\le i\le d}|l_i|$.
	Let us construct the
	desired potential in the following manner:
	\begin{eqnarray}\label{dfpotentValdoub}
	&&V_\alpha(\e):=\Sigma_{N,\,p,\,\alpha}(P_1(\e-\vec
	l))\vert_{N=\mathcal{N}(\vec l),\,p=|\vec l|_\infty+1}\quad
	\mathrm{for}\quad\vec l\in \Z^d\nonumber\\
	&&\mathrm{and}\quad \e\in
	Q_1(\vec l).
	\end{eqnarray}
	It is clear that $V_\alpha\in L_{\infty,\,loc}(\R^d)$.  
	Let us prove that the potential $V_\alpha(\e)$ satisfies all the conditions of Theorem \ref{thlogmdensdoubrearr}. 
	Let us take 
	a natural $n$, 
	\begin{equation}\label{jin1nnew}
	j\in\{1,2,\dots,n\}
	\end{equation}
	and cubes  
	\begin{equation}\label{inclcubenew}
	Q(\vec\xi,\,n),\, Q(\vec\eta,\,n)\subseteq\mathcal{D}_j(\vec l)
	\end{equation} 
	of the $3$-adic partition of $Q_1(\vec l)$ such that $|\vec\xi-\vec\eta|\le\frac{2\sqrt{d}}{\theta}3^{-(n-2)}$.
	Then in view of definition \eqref{dfYmust} of $Y(\e,\bt)$ with $V(\e)=V_\alpha(\e)$,
	\begin{eqnarray}\label{estYdoubstar}
	&&(Y^\star)^\star\big(3^{-\alpha n}\mathrm{mes}_d\big(Q(\vec\xi,\,n)\big);\,Q(\vec\xi,n),\,Q(\vec\eta,n)\big)\ge\nonumber\\
	&&(\sqrt{V_\alpha})^\star\big(3^{-\alpha n}\mathrm{mes}_d\big(Q(\vec\xi,\,n)\big);\,Q(\vec\xi,n)\big)\times\\
	&&(\sqrt{V_\alpha})^\star\big(3^{-\alpha n}\mathrm{mes}_d\big(Q(\vec\xi,\,n)\big);\,Q(\vec\eta,n)\big)
	\frac{3^{n(d-2)}}{C(\theta)}.\nonumber
	\end{eqnarray}
	Let us notice that 
	\begin{equation*}
	P_1\big(Q(\vec\xi,\,n)\big)=[k\,3^{-n},\,(k+1)\,3^{-n}]
	\end{equation*}
	for some $k\in\Z$.
	Then taking into account definitions \eqref{dfcalDnew}-\eqref{dfSigmaaNalpdoub}, \eqref{dfpotentValdoub} and the $1$-periodicity of $\theta_\beta(t)$, we get for $|\vec l|_\infty>n$ and 
	\begin{equation}\label{bounsfors}
	0<s\le \sqrt{\frac{C(\theta)\mathcal{N}(\vec l)}{3^{n(d-2)}}}:
	\end{equation}
	\begin{eqnarray}\label{bigestnew}
	&&\hskip-12mm\mathrm{mes}_d\Big(\big\{\e\in Q(\vec\xi,n):\,\sqrt{V_\alpha(\e)}\ge s\big\}\Big)=\nonumber\\
	&&\hskip-12mm3^{-(d-1)n}\mathrm{mes}_1\Big(\big\{x\in[k\,3^{-n},\,(k+1)\,3^{-n}]:\,\nonumber\\
	&&\hskip-12mm\theta_\beta(3^p\,x)\vert_{\beta=3^{-\alpha j},\,p=|\vec l|_\infty+1}\ge s^2\frac{3^{n(d-2)}}{C(\theta)\mathcal{N}(\vec l)} \big\}\Big)=\nonumber\\
	&&\hskip-12mm3^{-(d-1)n}\mathrm{mes}_1\Big(\big\{x\in[0,\,3^{-n}]:\,\theta_\beta(3^p\,x)\vert_{\beta=3^{-\alpha j},\,p=|\vec l|_\infty+1}>0 \big\}\Big)=\nonumber\\
	&&\hskip-12mm\frac{3^{-(d-1)n}}{3^{|\vec l|_\infty+1}}\mathrm{mes}_1\Big(\big\{t\in[0,\,3^{|\vec l|_\infty+1-n}]:\,\theta_\beta(t)\vert_{\beta=3^{-\alpha j}}>0\big\}\Big)=\nonumber\\
	&&\hskip-12mm 3^{-dn}\mathrm{mes}_1\Big(\big\{t\in[0,\,1]:\,\theta_\beta(t)\vert_{\beta=3^{-\alpha j}}>0\big\}\Big)=\nonumber\\
	&&\hskip-12mm3^{-\alpha j}\mathrm{mes}_d\big(Q(\vec\xi,\,n)\big)\ge 3^{-\alpha n}\mathrm{mes}_d\big(Q(\vec\xi,\,n)\big).\nonumber
	\end{eqnarray}
	The analogous estimate we get with $\vec\eta$ instead of $\vec\xi$. Therefore in view of \eqref{bounsfors} and definitions \eqref{dfSVyrdel}-\eqref{dfLVsry}, 
	\begin{equation*}
	(\sqrt{V_\alpha})^\star\big(3^{-\alpha n}\mathrm{mes}_d\big(Q(\vec\xi,\,n)\big);\,Q(\vec\xi,\,n)\big)\ge\sqrt{\frac{C(\theta)\mathcal{N}(\vec l)}{3^{n(d-2)}}},
	\end{equation*}
	and
	\begin{equation*}
	(\sqrt{V_\alpha})^\star\big(3^{-\alpha n}\mathrm{mes}_d\big(Q(\vec\eta,\,n)\big);\,Q(\vec\eta,\,n)\big)\ge\sqrt{\frac{C(\theta)\mathcal{N}(\vec l)}{3^{n(d-2)}}},
	\end{equation*}
	if conditions \eqref{jin1nnew} and \eqref{inclcubenew} are satisfied. Then, in view of \eqref{estYdoubstar},
	\begin{equation*}
	(Y^\star)^\star\big(3^{-\alpha n}\mathrm{mes}_d\big(Q(\vec\xi,\,n)\big);\,Q(\vec\xi,n),\,Q(\vec\eta,n)\big)\ge)\mathcal{N}(\vec l).
	\end{equation*}
	This estimate and conditions \eqref{alphainnew}, \eqref{cndNl1newest}-b imply that condition 
	\eqref{cndlogdensdoubrearr} of Theorem \ref{thlogmdensdoubrearr} is satisfied for the potential $V_\alpha(\e)$ with $\gamma(r)=r^\alpha$ satisfying condition \eqref{cndgammar}. Hence the spectrum of the operator 
	$H=-\Delta+V_\alpha(\e)\cdot$ is discrete. Let us show that	condition \eqref{cndlebesgue} of Theorem \ref{thcondlebesgue} is not satisfied for the potential $V_\alpha(\e)$. To this end it is sufficient to show that for any function $\gamma(r)$, satisfying condition \eqref{cndgammar}, there are sequences of numbers $r_j>0$ and points $\by_j\in\R^d$ such that
	\begin{eqnarray}\label{findseqlebes}
	&&\lim_{j\rightarrow\infty}r_j=0,\quad  \lim_{j\rightarrow\infty}|\by_j|=\infty\nonumber\\
	&&\mathrm{and}\quad \limsup_{j\rightarrow\infty} V_\psi^\star\big(\gamma(r_j)\mathrm{mes}_d(Q_{r_j}(\by_j));\,Q_{r_j}(\by_j)\big)<\infty.
	\end{eqnarray}
	Consider 
	the intervals $I_{j,1}=[a_{j,1},\,b_{j,1}]\subset D_{j}\;(j\in\N)$ and
	the cubes $Q_{3^{-j}}(\by_{j})$, where $\by_{j}=\big(a_{j,1}+j,\,0,\dots,0\big)$. Then definitions \eqref{dfSigmaaNalpdoub}, \eqref{dfpotentValdoub} and condition \eqref{growthNl} imply that
	\begin{equation*}
	\sup_{j\in\N,\,\e\in Q_{3^{-j}}(\by_{j})}V_\alpha(\e)<\infty.
	\end{equation*} 
	Hence conditions \eqref{findseqlebes} are satisfied with $r_j=3^{-j}$.
\end{example}

\appendix
\section{Base polyhedron and Choquet integral for harmonic capacity}
\label{sec:A}

\begin{proposition}\label{prcore}
	Let $\Omega$ be an open and bounded subset of $\R^d$.
	
	(i) The base polyhedron $\mathrm{BP}(\bar\Omega))$ of the harmonic
	capacity on $\bar\Omega$, defined by \eqref{defcore},  is
	nonempty, convex and weak*-compact;
	
	(ii) If $F:\,\bar\Omega\rightarrow\R$ is a
	non-negative bounded Borel function,
	then  Choquet integral of it over $\bar\Omega$ by the harmonic capacity
	\begin{equation*}
	\int_{\bar\Omega}F(\e)\mathrm{cap}(\mathrm{d}\,\e):=\int_0^\infty\mathrm{cap}\Big(\big\{\e\in\bar\Omega\,:\;F(\e)\ge
	t\}\Big)\,dt
	\end{equation*}
	is represented in the following manner:
	\begin{eqnarray}\label{reprChoqintWin}
	\int_{\bar\Omega}F(\e)\,\mathrm{cap}(\mathrm{d}\,\e)=\max_{\mu\,\in\,\mathrm{BP}(\bar\Omega))}\int_{\bar\Omega}F(\e)\,\mu(
	\mathrm{d}\e).
	\end{eqnarray}
\end{proposition}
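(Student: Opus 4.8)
The plan is to establish (ii) first and read off the nonemptiness in (i) from its proof, since the Hahn--Banach construction used for the representation \eqref{reprChoqintWin} simultaneously produces elements of $\mathrm{BP}(\bar\Omega)$; convexity and weak$^*$-compactness of $\mathrm{BP}(\bar\Omega)$ I would handle directly. I will use the classical regularity properties of the harmonic capacity---monotonicity and submodularity \eqref{submod}, outer regularity $\mathrm{cap}(E)=\inf\{\mathrm{cap}(U):U\supseteq E,\ U\ \mathrm{open}\}$, continuity from below along increasing Borel sets, continuity from above along decreasing compacts, and Choquet capacitability of Borel sets---together with the equivalence, for a monotone set function vanishing at $\emptyset$, between submodularity and subadditivity of the associated Choquet functional. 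For (i), convexity is immediate: the constraints in \eqref{defcore} are affine (in)equalities in $\mu$ and $M^+(\bar\Omega)$ is convex. For weak$^*$-compactness, suppose $\mu_\alpha\to\mu$ weak$^*$ with $\mu_\alpha\in\mathrm{BP}(\bar\Omega)$: then $\mu(\bar\Omega)=\lim_\alpha\mu_\alpha(\bar\Omega)=\mathrm{cap}(\bar\Omega)$ since $1\in C(\bar\Omega)$; for open $U\subseteq\bar\Omega$ the Portmanteau inequality gives $\mu(U)\le\liminf_\alpha\mu_\alpha(U)\le\mathrm{cap}(U)$; and then for an arbitrary Borel $A$, outer regularity of the Radon measure $\mu$ and of $\mathrm{cap}$ yields $\mu(A)=\inf_{U\supseteq A}\mu(U)\le\inf_{U\supseteq A}\mathrm{cap}(U)=\mathrm{cap}(A)$, so $\mu\in\mathrm{BP}(\bar\Omega)$. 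Hence $\mathrm{BP}(\bar\Omega)$ is weak$^*$-closed, and since it is a weak$^*$-closed subset of the weak$^*$-compact ball $\{\mu\in M^+(\bar\Omega):\mu(\bar\Omega)\le\mathrm{cap}(\bar\Omega)\}$ (Banach--Alaoglu), it is weak$^*$-compact.

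For (ii), the inequality ``$\ge$'' is the easy half: for $\mu\in\mathrm{BP}(\bar\Omega)$, Cavalieri's formula for the countably additive $\mu$ and $\mu(\{F\ge t\})\le\mathrm{cap}(\{F\ge t\})$ give $\int_{\bar\Omega}F\,\mu(\mathrm{d}\e)=\int_0^\infty\mu(\{F\ge t\})\,\mathrm{d}t\le\int_0^\infty\mathrm{cap}(\{F\ge t\})\,\mathrm{d}t=\int_{\bar\Omega}F\,\mathrm{cap}(\mathrm{d}\,\e)$. The content is therefore attainment. Let $I(G):=\int_{\bar\Omega}G\,\mathrm{cap}(\mathrm{d}\,\e)$, extended to all of $C(\bar\Omega)$ in the usual asymmetric way; by submodularity $I$ is positively homogeneous, monotone and subadditive, so in particular $I(G)+I(-G)\ge I(0)=0$. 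Given a \emph{continuous} $F\ge0$, put $\ell_0(cF):=c\,I(F)$ on $\R F$; positive homogeneity and $I(F)+I(-F)\ge0$ show $\ell_0\le I$ there, so Hahn--Banach extends $\ell_0$ to a linear $\ell\le I$ on $C(\bar\Omega)$ with $\ell(F)=I(F)$. Monotonicity of $I$ forces $\ell\ge0$, hence $\ell(\cdot)=\int_{\bar\Omega}(\,\cdot\,)\,\mathrm{d}\mu^{*}$ for some $\mu^{*}\in M^+(\bar\Omega)$ (Riesz). Evaluating $\ell\le I$ at $\pm1$ gives $\mu^{*}(\bar\Omega)=\mathrm{cap}(\bar\Omega)$; approximating $\chi_K$ from above by continuous functions and using continuity from above of $\mathrm{cap}$ along decreasing compacts gives $\mu^{*}(K)\le\mathrm{cap}(K)$ for compact $K$, and outer regularity then gives $\mu^{*}(A)\le\mathrm{cap}(A)$ for all Borel $A$, so $\mu^{*}\in\mathrm{BP}(\bar\Omega)$. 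Since $\int_{\bar\Omega}F\,\mathrm{d}\mu^{*}=\ell(F)=I(F)$, this proves \eqref{reprChoqintWin} for continuous $F$, and, taking $F\equiv1$, the nonemptiness in (i).

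It remains to pass to an arbitrary nonnegative bounded Borel $F$, and this is the step I expect to be the main obstacle. The naive route---majorizing $F$ by continuous functions---fails, since a continuous majorant $G\ge F$ has \emph{closed} super-level sets, so $\{G\ge t\}\supseteq\overline{\{F\ge t\}}$ and $\inf_{G\ge F}I(G)$ may strictly exceed $\int_{\bar\Omega}F\,\mathrm{cap}(\mathrm{d}\,\e)$ when the level sets of $F$ are not closed. Instead I would use the classical greedy/``chain'' construction (the measure-theoretic analogue of Fujishige's greedy algorithm for submodular functions, cf.\ the first part of the proof of Theorem~2.4.1 in \cite{AH}): the super-level sets $E_t=\{F\ge t\}$ form a nested family of Borel sets, along which the increments $\mathrm{cap}(E_s)-\mathrm{cap}(E_t)$ ($s\le t$) are nonnegative and additive, hence define a premeasure on the algebra generated by $\{E_t\}$; one extends it to a Borel measure $\mu^{*}$ by Carath\'eodory (using continuity from below and above of $\mathrm{cap}$ for countable additivity---equivalently, one builds the Lebesgue--Stieltjes measure of $t\mapsto\mathrm{cap}(E_t)$ and lifts it to $\bar\Omega$), obtaining $\mu^{*}(\bar\Omega)=\mathrm{cap}(\bar\Omega)$ and $\mu^{*}(E_t)=\mathrm{cap}(E_t)$ for all but the at most countably many jump points of $t\mapsto\mathrm{cap}(E_t)$. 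The crux---where submodularity \eqref{submod} is used essentially---is the verification that this $\mu^{*}$ still satisfies $\mu^{*}(A)\le\mathrm{cap}(A)$ for \emph{every} Borel $A$, not merely for members of the chain; granting it, $\mu^{*}\in\mathrm{BP}(\bar\Omega)$ and $\int_{\bar\Omega}F\,\mathrm{d}\mu^{*}=\int_0^\infty\mathrm{cap}(E_t)\,\mathrm{d}t=\int_{\bar\Omega}F\,\mathrm{cap}(\mathrm{d}\,\e)$, which together with the easy half proves \eqref{reprChoqintWin} in general.
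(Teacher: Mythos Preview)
Your argument is correct in outline but takes a substantially different, and longer, route than the paper. The paper's proof of both parts rests on a single duality trick: it passes from the submodular capacity $\mathrm{cap}$ to its \emph{dual} $\mathrm{cap}^\star(A):=\mathrm{cap}(\bar\Omega)-\mathrm{cap}(A^c)$, which is supermodular and continuous from below, and then observes that $\mathrm{BP}(\bar\Omega)=\mathrm{Core}^\star(\bar\Omega)$ (the core of the cooperative game $\mathrm{cap}^\star$). For (i) the paper simply cites the standard game-theoretic fact that the core of a bounded, monotone, continuous-from-below supermodular set function is nonempty, convex and weak$^*$-compact (\cite{Mar-Mon}, Proposition~1). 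For (ii) it cites Schmeidler's representation (\cite{Schm}, Proposition~3), which gives $\int F^\star\,\mathrm{cap}^\star(\mathrm{d}\e)=\min_{\mu\in\mathrm{Core}^\star}\int F^\star\,\mathrm{d}\mu$ for the reflected function $F^\star:=N-F$ with $N=\sup F$; a two-line computation of both sides then yields $\int F\,\mathrm{cap}(\mathrm{d}\e)=\int F\,\mathrm{d}\mu_0$ for the minimizer $\mu_0\in\mathrm{BP}(\bar\Omega)$. This handles all bounded Borel $F$ at once, with no need for your two-stage passage continuous $\to$ Borel.

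Your Hahn--Banach argument for continuous $F$ and your greedy/chain construction for Borel $F$ are essentially a direct re-proof of Schmeidler's theorem, which is fine but heavier; the step you explicitly ``grant''---that the greedy measure $\mu^*$ built on the chain $\{F\ge t\}$ satisfies $\mu^*(A)\le\mathrm{cap}(A)$ for \emph{all} Borel $A$---is exactly where submodularity enters in Schmeidler's proof, and it does require a genuine argument (the standard one compares $\mu^*(A)$ to $\sum_i\big(\mathrm{cap}(A\cup E_{t_i})-\mathrm{cap}(A\cup E_{t_{i+1}})\big)$ and telescopes). Also note that in the paper's framework $M(\bar\Omega)$ consists of \emph{finitely} additive set functions, so the weak$^*$ topology is on the dual of $B(\Sigma_B,\bar\Omega)$; your Portmanteau/Riesz arguments implicitly use countable additivity, which is harmless here but worth flagging. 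What your approach buys is self-containedness; what the paper's buys is brevity and the avoidance of the continuous-to-Borel step entirely.
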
\textbf{}
\begin{proof}
	(i) As it is known (\cite{Maz1})(p. 537), the harmonic capacity  on $\Omega$ is a
	positive, monotone, bounded, continuous from above set function and
	$\mathrm{cap}(\emptyset)=0$. Recall that it is submodular, i.e. property \eqref{submod} is valid.
	Let us consider the set
	function which is called {\it dual} to ``cap'':
	$\mathrm{cap}^\star(A)=\mathrm{cap}(\bar\Omega)-\mathrm{cap}(A^c)$,
	where $A^c=\bar\Omega\setminus A$. It is easy to show that ``cap*'
	is a positive, monotone, bounded, continuous from below set function and
	$\mathrm{cap^\star.}(\emptyset)=0$, but it is {\it supermodular}
	in the sense that for any pair of sets
	$A,\,B\in\Sigma(\bar\Omega)$ $\mathrm{cap}^\star(A\cup
	B)+\mathrm{cap}^\star(A\cap
	B)\ge\mathrm{cap}^\star(A)+\mathrm{cap}^\star(B)$. As it is known (\cite{Mar-Mon}, Proposition 1)  
	for the set functions of this kind
	the collection of measures
	\begin{eqnarray}\label{dfCorest}
	&&\mathrm{Core}^\star(\bar\Omega))=\{\mu\in M(\bar\Omega)):\;\\
	&&\mu(A)\ge \mathrm{cap}^\star(A)\;\mathrm{for\; all}\;
	A\in\Sigma(\bar\Omega))\;\mathrm{and}\;\mu(\bar\Omega)=\mathrm{cap}^\star(\bar\Omega)\}\nonumber
	\end{eqnarray}
	is nonempty, convex and and weak*-compact. It is called the {\it
		core} of the set function``cap*''.  Since ``cap*'' is positive, it
	is clear that $\mathrm{Core}^\star(\bar\Omega)\subseteq
	M^+(\bar\Omega)$. It is not difficult to show that
	$\mathrm{BP}(\bar\Omega)=\mathrm{Core}^\star(\bar\Omega))$
	(\cite{Fuj}). Claim (i) is proven.
	
	(ii) we have for any $\mu\in\mathrm{BP}(\bar\Omega)$:
	\begin{eqnarray*}
		&&\int_{\bar\Omega} F(\e)\mu(\,
		\mathrm{d}\e)=\int_0^\infty\mu\Big(\{\e\in\bar\Omega\,:\;F(\e)\ge
		t\}\Big)\,dt\le\\
		&&\int_0^\infty\mathrm{cap}\Big(\{\e\in\bar\Omega\,:\;F(\e)\ge
		t\}\Big)\,dt=\int_{\bar\Omega}F(\e)\,\mathrm{cap}(\mathrm{d}\,\e).
	\end{eqnarray*}
	Hence the inequality
	\begin{eqnarray}\label{ineqintcaogemu}
	\int_{\bar\Omega}F(\e)\,\mathrm{cap}(\mathrm{d}\,\e)\ge\sup_{\mu\,\in\,\mathrm{BP}(\bar\Omega))}\int_{\bar\Omega}F(\e)\,\mu(
	\mathrm{d}\e).
	\end{eqnarray}
	is valid. Let us prove the inverse inequality.  Denote
	$N=\sup_{\e\in\bar\Omega}F(\e)$. Since $F(\e)$ is nonnegative,
	then $N\ge 0$. Consider the function $y=F^\star(\e)$, whose graph
	is symmetric to the graph of the function $y=F(\e)$ with respect
	of the hyperplane $y=N/2$, i.e., $F^\star(\e)=N-F(\e)$. It is
	clear that $F^\star(\e)$ is nonnegative and
	$\sup_{\e\in\bar\Omega}F^\star(\e)=N$. It is known that(\cite{Schm} (Proposition 3)):
	\begin{equation*}
	\int_{\bar\Omega}F^\star(\e)\mathrm{cap}^\star(
	\mathrm{d}\e)=\min_{\mu\in\mathrm{Core}^\star(\bar\Omega)}\int_{\bar\Omega}F^\star(\e)\mu(
	\mathrm{d}\e).
	\end{equation*}
	Since $\mathrm{Core}^\star(\bar\Omega)=\mathrm{BP}(\bar\Omega)$,
	this fact implies that there exists a measure
	$\mu_0\in\mathrm{BP}(\bar\Omega)$ such that
	\begin{equation}\label{equalint}
	\int_{\bar\Omega}F^\star(\e)\,\mathrm{cap}^\star(
	\mathrm{d}\e)=\int_{\bar\Omega}F^\star(\e)\,\mu_0( \mathrm{d}\e).
	\end{equation}
	Let us calculate:
	\begin{eqnarray}\label{calcintcapstr}
	&&\hskip-10mm\int_{\bar\Omega}F^\star(\e)\,\mathrm{cap}^\star( \mathrm{d}\e)=\int_0^N\Big(\mathrm{cap}(\bar\Omega)-\mathrm{cap}\big(\big\{\e\in\bar\Omega\,:\;F^\star(\e)<t\big\}\big)\Big)\,dt=\nonumber\\
	&&\hskip-10mm\int_0^N\Big(\mathrm{cap}(\bar\Omega)-\mathrm{cap}\big(\big\{\e\in\bar\Omega\,:\;F(\e)>N-t\big\}\big)\Big)\,dt=\nonumber\\
	&&\hskip-10mm\int_0^N\Big(\mathrm{cap}(\bar\Omega)-\mathrm{cap}\big(\big\{\e\in\bar\Omega\,:\;F(\e)>s\big\}\big)\Big)\,ds=\nonumber\\
	&&\hskip-10mm
	N\mathrm{cap}(\bar\Omega)-\int_{\bar\Omega}F(\e)\,\mathrm{cap}(
	\mathrm{d}\e),
	\end{eqnarray}
	\begin{eqnarray}\label{calcintmu}
	&&\int_{\bar\Omega}F^\star(\e)\,\mu_0( \mathrm{d}\e)=N\mu_0(\bar\Omega)-\int_{\bar\Omega}F(\e)\,\mu_0( \mathrm{d}\e)=\nonumber\\
	&&N\mathrm{cap}(\bar\Omega)-\int_{\bar\Omega}F(\e)\,\mu_0(
	\mathrm{d}\e).
	\end{eqnarray}
	The equalities \eqref{equalint}-\eqref{calcintmu} imply that
	$\int_{\bar\Omega}F(\e)\,\mathrm{cap}(
	\mathrm{d}\e)=\int_{\bar\Omega}F(\e)\,\mu_0( \mathrm{d}\e)$. This
	means that the inequality inverse to \eqref{ineqintcaogemu} is
	valid. Thus, the representation \eqref{reprChoqintWin} is valid.
	Claim (ii) is proven.
\end{proof}


\section{Estimates for the composition of Bessel kernel with itself}
\label{sec:B}

\begin{proposition}\label{estconvBeskern}
	Consider the function 
	\begin{equation*}
	X(\e,\bt,\by,r)=\int_{B_r(\by)}G_1(\e-\bs)G_1(\bs-\bt)\,
	\mathrm{d}\bs,
	\end{equation*}
	where $G_(\e)$ is the Bessel kernel of the order $1$, defined by \eqref{dfBesskern}.
	Then for any $r_0>0$. $r\in(0,r_0]$ and $\e,\bt\in B_r(\by)$ the estimates
	\begin{equation}\label{estXfrombelow}
	X(\e,\bt,\by,r)\ge\frac{A(r_0)}{|\e-\bt|^{d-2}},
	\end{equation}
	\begin{equation}\label{estXfromabove}
	X(\e,\bt,\by,r)\le\frac{B(r_0)}{|\e-\bt|^{d-2}}
	\end{equation}
	are valid with positive constants $A(r_0)$ and $B(r_0)$ depending only on $r_0$.
\end{proposition}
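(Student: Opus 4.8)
The plan is to establish the two inequalities separately: \eqref{estXfromabove} is routine, whereas getting \eqref{estXfrombelow} with a constant that does not deteriorate as $r\downarrow 0$ is the real point. First I would record the elementary facts about $G_1$ that I will use. The kernel $G_1$ is positive, radial and radially nonincreasing, so I write $G_1(\e)=g(|\e|)$ with $g$ continuous and strictly decreasing on $(0,\infty)$; moreover $g(t)\,t^{d-1}\to c_d>0$ as $t\downarrow 0$ (the classical near–origin asymptotic $G_1(\e)\sim c_d|\e|^{1-d}$, valid since $1<d$), and $g$ decays exponentially at infinity. Consequently, for every $R>0$ the constant $m(R):=\inf_{0<t\le R}g(t)\,t^{d-1}$ is strictly positive, being the infimum of a continuous positive function on $(0,R]$ with a positive limit at $0$.

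For the upper bound I would just enlarge the domain of integration. Since the integrand is nonnegative,
\[
X(\e,\bt,\by,r)\le\int_{\R^d}G_1(\e-\bs)\,G_1(\bs-\bt)\,\mathrm{d}\bs=(G_1\star G_1)(\e-\bt)=G_2(\e-\bt),
\]
where $G_2$ is the Bessel kernel of order $2$, because $\mathcal{F}(G_1\star G_1)=(1+|\bxi|^2)^{-1}=\mathcal{F}(G_2)$. Since $d\ge 3$ one has $G_2(\bz)\sim c_d'\,|\bz|^{2-d}$ as $\bz\to 0$ and $G_2$ is continuous and positive off the origin, so $\bz\mapsto G_2(\bz)\,|\bz|^{d-2}$ extends to a continuous function on $\R^d$ and is bounded, say by $B(r_0)$, on $\{|\bz|\le 2r_0\}$. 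As $\e,\bt\in B_r(\by)$ forces $|\e-\bt|\le 2r\le 2r_0$, this gives \eqref{estXfromabove}. (If one prefers to avoid the identity $G_1\star G_1=G_2$, the same bound follows from the usual splitting of $\R^d$ into the regions $|\bs-\e|<\tfrac12|\e-\bt|$, $|\bs-\bt|<\tfrac12|\e-\bt|$ and the remaining set; I would use whichever write-up is shorter.)

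For the lower bound I would argue geometrically. Put $\rho:=|\e-\bt|$; since $\e,\bt\in B_r(\by)$ we have $\rho\le 2r\le 2r_0$. The idea is to exhibit a ball of radius comparable to $\rho$, sitting at distance $O(\rho)$ from $\e$ and entirely inside $B_r(\by)$: on it both kernel factors are comparable to $\rho^{1-d}$ and its volume is of order $\rho^{d}$, which produces exactly $\rho^{2-d}$. Concretely I set $\e^{*}:=\by+\big(1-\tfrac{\rho}{6r}\big)(\e-\by)$ (and $\e^{*}:=\by$ if $\e=\by$), which makes sense as $1-\tfrac{\rho}{6r}\ge\tfrac23>0$. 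Then $|\e^{*}-\by|\le r-\rho/6$ and $|\e^{*}-\e|\le\rho/6$, so
\[
B_{\rho/12}(\e^{*})\subseteq B_r(\by)\cap B_{\rho/4}(\e).
\]
For $\bs\in B_{\rho/12}(\e^{*})$ we then have $|\e-\bs|<\rho/4$ and $\tfrac34\rho<|\bs-\bt|<\tfrac54\rho$, all lying in $(0,3r_0]$, so by monotonicity of $g$,
\[
G_1(\e-\bs)=g(|\e-\bs|)\ge g(\rho/4)\ge m(3r_0)\,(\rho/4)^{1-d},\qquad G_1(\bs-\bt)=g(|\bs-\bt|)\ge g(5\rho/4)\ge m(3r_0)\,(5\rho/4)^{1-d}.
\]
Restricting the integral to $B_{\rho/12}(\e^{*})\subseteq B_r(\by)$ and multiplying,
\[
X(\e,\bt,\by,r)\ge \mathrm{mes}_d\big(B_{\rho/12}(\e^{*})\big)\,m(3r_0)^2\,(\rho/4)^{1-d}(5\rho/4)^{1-d}=A(r_0)\,\rho^{2-d},
\]
where $A(r_0)$ is an explicit dimensional multiple of $m(3r_0)^2$, independent of $\by$ and of $r$. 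This is \eqref{estXfrombelow}.

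The main obstacle is precisely this $r$-uniformity, exactly as flagged in the introduction to Section \ref{sec:B}. If $\e$ and $\bt$ both lie near $\partial B_r(\by)$ with $\rho\ll r$, then a ball centred on the segment $[\e,\bt]$ of radius comparable to $\rho$ need not fit inside $B_r(\by)$ — its centre can be only $\sim\rho^2/r$ away from the boundary — and a ball of radius $o(\rho)$ would yield merely $o(\rho^{2-d})$. The construction above circumvents this: displacing $\e$ towards $\by$ by $\rho/6$ buys a full $\rho/6$ of clearance from $\partial B_r(\by)$ regardless of how close $\e$ is to it, while the constraint $r\le r_0$ confines every distance occurring in the estimates to $(0,3r_0]$, on which $g(t)\,t^{d-1}$ is bounded below by the positive constant $m(3r_0)$. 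Hence neither the geometric step nor the kernel bounds involve $r$, and the final constant depends only on $d$ and $r_0$.
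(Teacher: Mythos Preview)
Your proof is correct and considerably more direct than the paper's. For the upper bound the paper replaces $G_1$ by its pointwise Riesz majorant $D(2r_0)|\cdot|^{1-d}$, rescales, and checks that $\int_{\R^d}|\bu-\e_0|^{1-d}|\bu+\e_0|^{1-d}\,\mathrm d\bu<\infty$; your route via $G_1\star G_1=G_2$ and the known small-argument asymptotic of $G_2$ is equally valid and shorter. The real divergence is in the lower bound. The paper rescales the integral by $|\e-\bt|$, sending $\e,\bt$ to the fixed antipodal pair $\pm\e_0$ with $|\e_0|=\tfrac12$, and then needs a nontrivial geometric lemma (Lemma~\ref{lmconvhullofcones}, itself relying on a convexity comparison Lemma~\ref{comparconvex}) to produce, inside the rescaled ball, a region $\mathcal C(\e_0,\be)$ of fixed shape containing $\pm\e_0$; rotational invariance then shows the integral over this region is a positive constant independent of everything. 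Your observation that pulling $\e$ radially towards the centre $\by$ by an amount proportional to $\rho=|\e-\bt|$ buys $\rho/6$ of clearance from $\partial B_r(\by)$ regardless of $r$---and hence room for a ball $B_{\rho/12}(\e^\star)$ sitting at distance $\Theta(\rho)$ from both $\e$ and $\bt$---bypasses all of that machinery. The payoff is that you avoid two auxiliary lemmas; the paper's construction, on the other hand, is symmetric in $\e$ and $\bt$ and makes the rotational invariance explicit, which is conceptually appealing but not needed for the statement.
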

\begin{proof}
	It is known (\cite{AH}, pp, 9-11) that for $|\e|\le r$ the estimates 
	\begin{equation*}
	\frac{C(r_0)}{|\e|^{d-1}}\le G_(\e)\le\frac{D(r_0)}{|\e|^{d-1}}
	\end{equation*}
	are valid with positive constants $C(r_0)$ and $D(r_0) $depending only on $r_0$.
	Then for any $r\in(0,r_0]$ and $\e,\bt\in B_r(\by)$
	\begin{equation}\label{estXxtyr}
	(C(2r_0))^2S(\e,\bt,\by,r)\le X(\e,\bt,\by,r)\le (D(2r_0))^2S(\e,\bt,\by,r),  
	\end{equation}
	where
	\begin{equation*}
	S(\e,\bt,\by,r)=\int_{\bar B_r(\by)}\frac{\mathrm{d}\bs}{|\e-\bs|^{d-1}|\bs-\bt|^{d-1}}.
	\end{equation*}
	Assuming that $\e\neq\bt$, let us make the change of variables in the last integral $\bu=\frac{\bs-\bz}{|\e-\bt|}$ with $\bz=\frac{\e+\bt}{2}$:  
	\begin{equation}\label{dfSxtyr}	
	S(\e,\bt,\by,r)=\frac{1}{|\e-\bt|^{d-2}}\int_{\bar B(\e,\bt,\by,r)}\frac{\mathrm{d}\bu}{|\bu-\e_0|^{d-1}|\bu+\e_0|^{d-1}},
	\end{equation}
	where $\e_0=\frac{\bs+\bz}{|\e-\bt|}=\frac{\e-\bt}{2|\e-\bt|}$ and 
	$\bar B(\e,\bt,\by,r)=\frac{1}{|\e-\bt|}\big(\bar B_r(\by)-\{\bz\}\big)$ is a closed  ball containing the points $\e_0$ and $-\e_0$. We see that $|\e_0|=1/2$. Using Lemma \ref{lmconvhullofcones}, we have:
	$S(\e,\bt,\by,r)\ge\frac{1}{|\e-\bt|^{d-2}}\tilde S(\e_0,\be)$, where 
	\begin{equation*}
	\tilde S(\e_0,\be)=\int_{{\mathcal C}(\e_0,\be)}\frac{\mathrm{d}\bu}{|\bu-\e_0|^{d-1}|\bu+\e_0|^{d-1}}
	\end{equation*}
	and the set ${\mathcal C}(\e_0,\be)$ is defined by \eqref{dfcalCxe}-\eqref{dfCxef} with $\be\bot\e_0$ and $|\be|=1/2$. It us easy to check that for any orthogonal transformation $U$ of $\R^d$ with a positive Jacobian: $\tilde S(\e_0,\be)=\tilde S(U(\e_0),U(\be))$. This means that the quantity $\tilde S(\e_0,\be)$ does not depend on $\e_0$ and $\be$ satisfying the above conditions. Furthermore, we see from definition \eqref{dfcalCxe}-\eqref{dfCxef} that the interior of ${\mathcal C}(\e_0,\be)$ is not empty. Hence $\tilde S>0$. These circumstances and the left estimate of \eqref{estXxtyr}
	imply estimate \eqref{estXfrombelow} with $A(r_0)=(C(2r_0))^2\tilde S$.
	
	Let us prove estimate \eqref{estXfromabove}. In view of \eqref{dfSxtyr},
	\begin{equation*}
	 S(\e,\bt,\by,r)\le\frac{1}{|\e-\bt|^{d-2}}\bar S(\e_0),
	\end{equation*}
	 where 
	$\bar S(\e_0)=\int_{\R^d}\frac{\mathrm{d}\bu}{|\bu-\e_0|^{d-1}|\bu+\e_0|^{d-1}}$. It is easy to check that since $d\ge 3$, then $\bar S(\e_0)<\infty$. Furthermore, we see again that $\bar S(U(\e_0))=\bar S(\e_0)$ for any orthogonal transformation $U$ of $\R^d$ with a positive Jacobian, i.e., $\bar S(\e_0)$ does not depend on $\e_0$. Therefore the right estimate of \eqref{estXxtyr}
	imply estimate \eqref{estXfromabove} with $B(r_0)=(D(2r_0))^2\bar S$.
\end{proof}

We shall denote by $\mathrm{co}$ the convex hull of a set and by $\overline{\mathrm{co}}$ the closure of this convex hull. In the proof of Proposition \ref{estconvBeskern} we have used the following claim:

\begin{lemma}\label{lmconvhullofcones}
	Suppose that a closed ball $\bar B_R(\by_0)$ contains two points $\e_0$ and $-\e_0$ with $|\e_0|=1/2$. 
	Then there is $\be\in\R^d$ such that $\be\bot\e_0$, $|\be|=1/2$ and the set 
	\begin{equation}\label{dfcalCxe}
	{\mathcal C}(\e_0,\be)=\overline{\mathrm{co}}\Big(\bigcup_{\fb\bot\mathrm{span}\big(\{\e_0,\be\}\big)} C(\e_0,\be,\fb)\Big),
	\end{equation}
	where
	\begin{equation}\label{dfCxef}
	C(\e_0,\be,\fb)=\mathrm{co}\Big(\{\be\}\cup\{\bu\in\mathrm{span}\big(\{\e_0,\fb\}\big):\,|\bu|\le 1/2\}\Big),
	\end{equation}
	is contained in $\bar B_R(\by_0)$.
\end{lemma}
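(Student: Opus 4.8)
The plan is to compute the set $\mathcal{C}(\e_0,\be)$ explicitly for an arbitrary admissible apex $\be$ (meaning $\be\perp\e_0$ and $|\be|=1/2$), recognise it as the solid cone with vertex $\be$ over the $(d-1)$-dimensional disc of radius $1/2$ centred at the origin in the hyperplane $\be^{\perp}$, and then choose $\be$ so that the axis-plane $\spn\{\e_0,\be\}$ passes through the centre $\by_0$ of the ball. With that alignment the required inclusion $\mathcal{C}(\e_0,\be)\subseteq\bar B_R(\by_0)$ will drop out of the hypothesis $\pm\e_0\in\bar B_R(\by_0)$.

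First I would prove, for every admissible $\be$, the identity
\[
\mathcal{C}(\e_0,\be)=\mathrm{co}\big(\{\be\}\cup D\big),\qquad D:=\{\bu\in\be^{\perp}:\,|\bu|\le 1/2\}.
\]
For ``$\subseteq$'': since $\e_0\perp\be$ and $\fb\perp\be$, one has $\spn\{\e_0,\fb\}\subseteq\be^{\perp}$, so $D_\fb:=\{\bu\in\spn\{\e_0,\fb\}:|\bu|\le1/2\}\subseteq D$ and hence $C(\e_0,\be,\fb)=\mathrm{co}(\{\be\}\cup D_\fb)\subseteq\mathrm{co}(\{\be\}\cup D)$; moreover $\{\be\}\cup D$ is compact, so $\mathrm{co}(\{\be\}\cup D)$ is compact, hence closed, which absorbs the closure in \eqref{dfcalCxe}. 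For ``$\supseteq$'' it is enough that $\bigcup_\fb D_\fb=D$, for then $\{\be\}\cup D\subseteq\bigcup_\fb C(\e_0,\be,\fb)$ and so $\mathrm{co}(\{\be\}\cup D)\subseteq\mathcal{C}(\e_0,\be)$; this equality holds because $\e_0$ and $\be$ are nonzero and orthogonal, so $\spn\{\e_0,\be\}$ is two-dimensional and $\spn\{\e_0\}\oplus\spn\{\e_0,\be\}^{\perp}=\be^{\perp}$ by dimension, whence any $\bu\in D$ can be written $\bu=s\e_0+\fb$ with $\fb\perp\spn\{\e_0,\be\}$, so that $\bu\in\spn\{\e_0,\fb\}$ with $|\bu|\le1/2$, i.e.\ $\bu\in D_\fb$. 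Since $\bar B_R(\by_0)$ is convex and $\mathrm{co}(S)=D$ for the sphere $S:=\{\bu\in\be^{\perp}:|\bu|=1/2\}$, it remains only to produce an admissible $\be$ with $\be\in\bar B_R(\by_0)$ and $S\subseteq\bar B_R(\by_0)$.

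Next I would choose $\be$. Write $\by_0=4\langle\e_0,\by_0\rangle\,\e_0+\by_0^{\perp}$ with $\by_0^{\perp}\perp\e_0$ (using $|\e_0|^2=1/4$), and set $\be:=\by_0^{\perp}/(2|\by_0^{\perp}|)$ if $\by_0^{\perp}\ne0$, while if $\by_0^{\perp}=0$ let $\be$ be any vector with $\be\perp\e_0$, $|\be|=1/2$ (possible since $d\ge2$). In both cases $\be$ is admissible, $\by_0\in\spn\{\e_0,\be\}$, and $\langle\be,\by_0\rangle\ge0$ (indeed $\by_0-\by_0^{\perp}\perp\be$ and $\by_0^{\perp}$ is a nonnegative multiple of $\be$). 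Expanding the hypotheses $|\e_0-\by_0|^2\le R^2$ and $|\e_0+\by_0|^2\le R^2$ gives $1/4\pm2\langle\e_0,\by_0\rangle+|\by_0|^2\le R^2$, hence $1/4+2|\langle\e_0,\by_0\rangle|+|\by_0|^2\le R^2$ and, on adding, $|\by_0|^2\le R^2-1/4$. Therefore
\[
|\be-\by_0|^2=1/4-2\langle\be,\by_0\rangle+|\by_0|^2\le 1/4+|\by_0|^2\le R^2,
\]
so $\be\in\bar B_R(\by_0)$; and for $\bu\in S$, since $\bu\perp\be$ and $\by_0^{\perp}$ is a multiple of $\be$ (or $0$), we have $\langle\bu,\by_0\rangle=4\langle\e_0,\by_0\rangle\langle\bu,\e_0\rangle$ with $|\langle\bu,\e_0\rangle|\le|\bu|\,|\e_0|=1/4$, so
\[
|\bu-\by_0|^2=1/4-2\langle\bu,\by_0\rangle+|\by_0|^2\le 1/4+2|\langle\e_0,\by_0\rangle|+|\by_0|^2\le R^2,
\]
giving $S\subseteq\bar B_R(\by_0)$. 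Together with the first step this yields $\mathcal{C}(\e_0,\be)\subseteq\bar B_R(\by_0)$, proving the lemma.

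The step I expect to be the main obstacle is the identity $\mathcal{C}(\e_0,\be)=\mathrm{co}(\{\be\}\cup D)$, and specifically $\bigcup_\fb D_\fb=D$: one must keep careful track of the orthogonality relations $\e_0\perp\fb$, $\fb\perp\be$, $\e_0\perp\be$ and of the splitting $\spn\{\e_0\}\oplus\spn\{\e_0,\be\}^{\perp}=\be^{\perp}$ in order to see that the two-dimensional discs $D_\fb$ sweep out the \emph{entire} $(d-1)$-dimensional disc $D$ and not a proper subset of it. Once this cone picture is in hand, aligning the axis-plane with $\by_0$ and verifying the two quadratic inequalities is routine.
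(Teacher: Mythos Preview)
Your proof is correct, and it takes a genuinely different route from the paper's own argument. Both proofs make the same choice of $\be$: the unit-half vector in the direction of the component of $\by_0$ orthogonal to $\e_0$ (or an arbitrary one when that component vanishes). From there the arguments diverge.

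The paper never identifies $\mathcal{C}(\e_0,\be)$ globally; instead it works one ``slice'' $C(\e_0,\be,\fb)$ at a time. For each $\fb\perp\spn\{\e_0,\be\}$ it intersects the ball $\bar B_R(\by_0)$ with the three-dimensional subspace $\spn\{\e_0,\be,\fb\}$, then reduces the inclusion $C(\e_0,\be,\fb)\subseteq\bar B_R(\by_0)$ to a planar statement: that the disc $\{\bu\in\spn\{\e_0,\fb\}:|\bu|\le 1/2\}$ lies inside the larger disc $\bar B_R(\by_0)\cap\spn\{\e_0,\fb\}$. That planar inclusion is then checked via a second-derivative comparison of the upper and lower boundary arcs, packaged as a separate auxiliary lemma. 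The verification that $\be$ itself lies in the ball is handled by an inscribed-right-triangle argument giving $h\ge 1/2$ for the radius of the slice $\bar B_R(\by_0)\cap\e_0^{\perp}$.

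Your approach is shorter and more transparent: you first collapse $\mathcal{C}(\e_0,\be)$ to the single solid cone $\mathrm{co}(\{\be\}\cup D)$ over the $(d-1)$-disc $D\subset\be^{\perp}$, then dispose of the inclusion by two elementary quadratic estimates drawn directly from $|\pm\e_0-\by_0|^2\le R^2$. This avoids the slicing, the triangle geometry, and the calculus lemma entirely; what it costs is the explicit identification $\bigcup_\fb D_\fb=D$ via the splitting $\be^{\perp}=\spn\{\e_0\}\oplus\spn\{\e_0,\be\}^{\perp}$, which you handle cleanly. Either argument suffices, but yours is more economical.
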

\begin{proof}
	Notice that since $\e_0,\,-\e_0\in\bar B_R(\by_0)$ and $|\e_0|=1/2$, then $R\ge 1/2$. Consider the one-dimensional subspace of $\R^d$: $L_0:=\mathrm{span}({\e_0})$ and denote by $P_1$ the operator of orthogonal projection on $L_0$. Denote $L_0^\bot:=\R^d\ominus L_0$. Consider the $d-1$-dimensional closed ball $\bar B_h^{d-1}(\bz_0):=\bar B_R(\by_0)\cap L_0^\bot$, where $\bz_0=(I-P_1)\by_0$. In the case where $\e_0$ and $\by_0$ are linearly independent (i.e., $\bz_0\neq \vec 0$) consider the two-dimensional subspace $\Pi:=\mathrm{span}({\e_0,\by_0})\subset\R^d$. In the case where $\e_0$ and $\by_0$ are linearly dependent (i.e., $\bz_0=\vec 0$), let us take as $\Pi$ any two-dimensional subspace of $\R^d$, containing $L_0$. Consider the disk $D:=\bar B_R(\by_0)\cap\Pi$ and the triangle $\vec a\,\vec b\,\vec c$ inscribed in $D$ with $\{\vec a,\,\vec c\}=\partial D\cap \big(L_0+\{\by_0\}\big)$ and the point $\vec b$ is belongs to the two-point set $\partial D\cap \bar B_h^{d-1}(\bz_0)$. In the case where $\bz_0\neq 0$ let us choose $\vec b$ such that $\vec b\cdot\bz_0<0$. Since the angle of this triangle at the vertex $\vec b$ is direct and the interval $[\vec a,\,\vec c]$ is orthogonal to the interval $[\bz_0,\,\vec b]$, we have: $h=|\bz_0-\vec b|=\sqrt{|\vec a-\bz_0||\vec c-\bz_0|}$. Furthermore, since $\e_0,\,-\e_0\in D$, the interval $[\e_0,\,-\e_0]$ is parallel to the interval $[\vec a,\,\vec c]$ and $|\e_0|=1/2$, then we see that $|\vec a-\bz_0|\ge1/2$ and $|\vec c-\bz_0|\ge1/2$. Hence $h\ge 1/2$. Let us take $\be_0=\frac{\bz_0}{2|\bz_0|}$, if $\bz_0\neq \vec 0$ and if $\bz_0=\vec 0$, let us take as $\be_0$ any vector $\be_0\in\Pi$ such that $\be_0\bot\be_0$ and $|\be_0|=1/2$. We see that in any case $\Pi=\mathrm{span}(\{\be_0,\,\e_0\})$. Since the ball $\bar B_R(\by_0)$ is a cosed convex set, then in in view of \eqref{dfcalCxe}, \eqref{dfCxef}, in order to prove the inclusion ${\mathcal C}(\e_0,\be)\subseteq\bar B_R(\by_0)$, we should show that for any $\fb\bot\Pi$ with $|\fb|=1$
	\begin{equation}\label{inclCB3}
	C(\e_0,\be,\fb)\subseteq\bar B_R^3(\by_0) =\bar B_R(\by_0)\cap\mathrm{span}\big(\{\e_0,\be_0,\fb\}\big).
	\end{equation}
	On the other hand, since the point $\vec 0$ belongs to the interval $[\bz_0,\,\vec b]$, $h\ge 1/2$ and $|\be|=1/2$, then $\be_0\in\bar B_h^{d-1}(\bz_0)$. Hence $\be_0\in\bar B_R(\by_0)$. Therefore in view of \eqref{dfCxef}, in order to prove inclusion \eqref{inclCB3}, we should show that the disk
	$\tilde D=\{\bu\in\tilde\Pi:\,|\bu|\le 1/2\}$ with $\tilde\Pi=\mathrm{span}(\{\e_0,\fb\})$ is contained in the disk $\hat D=\bar B_R^3(\by_0)\cap\tilde\Pi$. Since $\fb\bot\Pi$, we can consider in the plane $\tilde\Pi$ the coordinate system $XY$ with the orthonormal basis $\fb,\,2\e_0$. Then in these coordinates $\tilde D=\{(x,y):\,x^2+y^2\le 1/4\}$. Taking into account that the plane $\Pi$ contains the center $\by_0$ of the ball $bar B_R^3(\by_0)$, it is easy to see that in these coordinates $\hat D=\{(x,y):\,x^2+(y-a)^2\le r^2\}$, where $a=2\e_0\cdot\by_0$. Notice that since $\e_0,\,-\e_0\in \hat D$, then $r\ge 1/2$ and 
	\begin{equation}\label{inclinterv}
	[-\e_0,\,\e_0]\subseteq[\vec d,\,\vec g],
	\end{equation}
	where $\vec d$ and $\vec g$ are the points of intersection of the axis $Y$ with the circle $\partial\hat D$. It is clear that in order to prove that $\tilde D\subseteq\hat D$, it is enough to show that for any $x\in[-1/2.\,1/2]$
	\begin{equation}\label{ineqf12}
	f_1(x)\ge f_2(x) 
	\end{equation}
	and 
	\begin{equation}\label{ineqf34}
	f_3(x)\ge f_4(x),
	\end{equation}
	where $f_1(x)=\sqrt{r^2-x^2}+a$, $f_2(x)=\sqrt{1/4-x^2}$, $f_3(x)=-f_2(x)$ and $f_4(x)=-\sqrt{r^2-x^2}+a$. In view of \eqref{inclinterv}, $f_1(0)\ge f_2(0)$ and $f_3(0)\ge f_4(0)$. Furthermore, after the calculation we have: $f_1^\prime(0)=f_2^\prime(0)=f_3^\prime(0)=f_4^\prime(0)=0$ and $f_1^{\prime\prime}(x)=-\frac{1}{\sqrt{r^2-x^2}}-\frac{x^2}{(r^2-x^2)^{3/2}}$, $f_2^{\prime\prime}(x)=-\frac{1}{\sqrt{1/4-x^2}}-\frac{x^2}{(1/4-x^2)^{3/2}}$,
	$f_3^{\prime\prime}(x)=\frac{1}{\sqrt{1/4-x^2}}+\frac{x^2}{(1/4-x^2)^{3/2}}$,
	$f_4^{\prime\prime}(x)=\frac{1}{\sqrt{r^2-x^2}}+\frac{x^2}{(r^2-x^2)^{3/2}}$. Since $r\ge 1/2$, we see that for any $x\in[-1/2,\,1/2]$ $f_1^{\prime\prime}(x)\ge f_2^{\prime\prime}(x)$ and $f_3^{\prime\prime}(x)\ge f_4^{\prime\prime}(x)$.  Then by Lemma \ref{comparconvex}, inequalities  \eqref{ineqf12}, \eqref{ineqf34} are valid. 
\end{proof}

In the proof of Lemma \ref{lmconvhullofcones} we have used the following claim:

\begin{lemma}\label{comparconvex}
	Suppose that real-valued functions $f_1(x)$ and $f_2(x)$ are defined on an interval $[-r,r]\,(r>0)$ and belong to $C^2[-r,r]$. If $f_1(0)\ge f_2(0)$, $f_1^\prime(0)=f_2^\prime(0)=0$  and $f_1^{\prime\prime}(x)\ge f_x^{\prime\prime}(x)$ for any $x\in[-r,r]$,	then $f_1(x)\ge f_2(x)$ for any $x\in[-r,r]$.  
\end{lemma}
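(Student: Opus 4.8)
The plan is to reduce everything to the auxiliary function $g(x):=f_1(x)-f_2(x)$, which by hypothesis lies in $C^2[-r,r]$ and satisfies $g(0)\ge 0$, $g'(0)=0$, and $g''(x)\ge 0$ for all $x\in[-r,r]$. The goal is then to show $g(x)\ge 0$ on $[-r,r]$, which is exactly the assertion $f_1\ge f_2$.

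First I would observe that $g''\ge 0$ means $g'$ is nondecreasing on $[-r,r]$. Combined with $g'(0)=0$ this forces $g'(x)\ge 0$ for $x\in[0,r]$ and $g'(x)\le 0$ for $x\in[-r,0]$. Hence $g$ is nondecreasing on $[0,r]$ and nonincreasing on $[-r,0]$, so in either case $g(x)\ge g(0)\ge 0$, which is what we want. Alternatively, and perhaps cleaner to write down, I would invoke Taylor's formula with integral remainder at the base point $0$:
\begin{equation*}
g(x)=g(0)+g'(0)\,x+\int_0^x (x-t)\,g''(t)\,\mathrm{d}t=g(0)+\int_0^x (x-t)\,g''(t)\,\mathrm{d}t .
\end{equation*}
For $x\ge 0$ one has $x-t\ge 0$ on $[0,x]$ and $g''\ge 0$, so the remainder integral is nonnegative; for $x<0$ one rewrites $\int_0^x(x-t)g''(t)\,\mathrm{d}t=\int_x^0 (t-x)\,g''(t)\,\mathrm{d}t$, where now $t-x\ge 0$ on $[x,0]$ and $g''\ge 0$, so again the integral is nonnegative. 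In both cases $g(x)\ge g(0)\ge 0$.

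There is essentially no serious obstacle here; the only point requiring a little care is the sign bookkeeping for $x<0$ (equivalently, the direction of monotonicity of $g$ to the left of the origin), which is handled by the case split above. I would therefore present the argument via the two monotonicity statements for $g'$, or equivalently via the Taylor remainder, and conclude $f_1(x)\ge f_2(x)$ for every $x\in[-r,r]$.
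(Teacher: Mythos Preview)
Your proof is correct and follows essentially the same approach as the paper: both set $\phi=f_1-f_2$ (your $g$) and apply Taylor's formula with integral remainder at $0$, splitting into the cases $x\ge 0$ and $x<0$ to check the sign of the remainder. Your additional monotonicity argument via $g'$ is a nice alternative but not needed.
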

\begin{proof}
	Denote $\phi(x)=f_1(x)-f_2(x)$. Then from the obvious representation 
	\begin{displaymath}
	\phi(x)=\left\{\begin{array}{ll}
	\phi(0)+\int_0^x(x-t)\phi^{\prime\prime}(t)\,dt&\quad\mathrm{for}\quad x\in[0,r],\\
	\phi(0)+\int_x^0(t-x)\phi^{\prime\prime}(t)\,dt&\quad\mathrm{for}\quad x\in[-r,0)
	\end{array}\right.
	\end{displaymath}
	we obtain the desired claim.
\end{proof}

\section{Existence of the repeated nonincreasing rearrangement}
\label{sec:C}

	Let $(Y,\,\Xi,\,\nu)$ be a   measure spaces with  non-negative measure 
	$\nu$.
	Consider the Riesz space $M_\nu(Y)$ \cite{Lux-Za} of $\nu$-measurable functions $\phi:\,Y\rightarrow\R$, taking finite values for $\nu$-almost all $\bs\in Y$, with identification of $\nu$-almost all equal functions.  Recall that the lattice ordering relation 
	$v\ge w\,(v,w\in M_\nu(Y))$ means that $v(\bs)\ge w(\bs))$ for $\nu$-almost all $\bs\in Y$. Denote by $\bigvee(\mathcal{A})$ and $\bigwedge(\mathcal{A})$ the lattice supremum and infimum for a set of functions  $\mathcal{A}\subseteq M_\nu(Y)$.  It is known that if the measure $\nu$ finite, the space $M_\nu(Y)$ is {\it super Dedekind complete} (\cite{Lux-Za}, Chapt 4), i.e. it is {\it order separable} and Dedekind complete. Dedekind completeness means that every non-empty subset of $M_\nu(Y)$ which is bounded from above has a lattice supremum in $M_\nu(Y)$ and order separability means that every non-empty subset having a lattice supremum in $M_\nu(Y)$ contains at most countable subset having the same lattice supremum.  Furthermore, denote by $M_\nu^+(Y)$ the part of $M_\nu(Y)$, consisting of functions which are positive for $\nu$-almost all $\bs\in Y$. Along with the measure space $(Y,\,\Xi,\,\nu)$ consider another measure space $(X,\,\Sigma,\,\mu)$ with a non-negative measure $\mu$. Let $F:\,X\times Y\rightarrow\R$ be a nonnegative function, belonging to $L_1(X\times Y,\,\mu\times\nu)$.
	For a fixed $v(\cdot)\in M_\nu^+(Y)$ consider the function  
	\begin{equation}\label{dflambdXstar}
	\lambda_X^\star(v(\bs))=\mu\Big(\{\e\in X:\, F(\e,\bs)\ge v(\bs)\}\Big)=\int_X (h_{v(\bs)}\circ F)(\e,\bs)\mu(\mathrm{d}\e),
	\end{equation}
	where $h_{v(\bs)}(x)=h(x-v(\bs))$ and $h(x)$ is the Heaviside function,	and the set
	\begin{equation}\label{dfAt}
	\mathcal{A}_t=\{v\in M_\nu^+(Y):\,\lambda_X^\star(v(\bs))\ge t\}\quad (t>0).
	\end{equation}
	\begin{proposition}\label{prexistpartrear}
		Suppose that the measure $\nu$ is finite. Then
		
	(i) there exists a nonnegative function $F^\star(\cdot,\bs)(t;\, X;\,\mu)$,  defined by 
	\begin{equation}\label{dfpartrear}
	F^\star(\cdot,\bs)(t;\, X;\,\mu)=\bigvee\Big(\mathcal{A}_t\Big)
	\end{equation}
	and belonging to $L_1(Y,\,\nu)$; 
	
	(ii) for $\nu$-almost all $\bs\in Y$ the function $F^\star(\cdot,\bs)(t;\, X;\,\mu)$ is the nonincreasing rearrangement of $F(\e,\bs)$ by the variable $\e$, i.e., $F^\star(\cdot,\bs)(t;\, X;\,\mu)=\sup\{v\in\mathcal{A}_t:\,v(\bs)\}$. 
	\end{proposition}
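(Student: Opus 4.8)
The plan is to exhibit an explicit candidate for $\bigvee(\mathcal A_t)$ --- namely the genuine (pointwise) nonincreasing rearrangement of $F$ in the first variable --- and then verify it satisfies the order-theoretic requirements. By Fubini's theorem, $F(\cdot,\bs)\in L_1(X,\mu)$ for $\nu$-almost every $\bs\in Y$; for such $\bs$ put
\[
\bar g(\bs):=\big(F(\cdot,\bs)\big)^\star(t;\,X;\,\mu)
=\sup\{s>0:\ \mu(\{\e\in X:\ F(\e,\bs)\ge s\})\ge t\},
\]
the rearrangement from \eqref{dfSVyrdel}. Two routine checks come next. Measurability of $\bar g$: for each $s$ the map $\bs\mapsto\lambda_\bs(s):=\mu(\{\e:\ F(\e,\bs)\ge s\})$ is $\nu$-measurable (Fubini) and non-increasing in $s$, and since $\lambda_\bs$ is non-increasing one has $\bar g(\bs)>a$ iff $\lambda_\bs(s)\ge t$ for some rational $s>a$; hence $\{\bar g>a\}=\bigcup_{s\in\mathbf{Q},\,s>a}\{\bs:\ \lambda_\bs(s)\ge t\}$ is $\nu$-measurable. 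Integrability: Chebyshev's inequality gives $\bar g(\bs)\le t^{-1}\int_X F(\e,\bs)\,\mu(\mathrm{d}\e)$, and integrating in $\bs$ yields $\int_Y\bar g\,\mathrm{d}\nu\le t^{-1}\|F\|_{L_1(X\times Y,\,\mu\times\nu)}<\infty$; in particular $\bar g$ is finite $\nu$-a.e., so $\bar g\in M_\nu(Y)\cap L_1(Y,\nu)$.

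The core of the proof is to identify $\bar g$ with $\bigvee(\mathcal A_t)$. On one hand, $\bar g$ dominates every element of $\mathcal A_t$: if $v\in\mathcal A_t$ then $\lambda_X^\star(v(\bs))=\lambda_\bs(v(\bs))\ge t$ for $\nu$-a.e. $\bs$ and $v(\bs)>0$ there, so $v(\bs)$ is admissible in the supremum defining $\bar g(\bs)$, whence $v(\bs)\le\bar g(\bs)$ for $\nu$-a.e. $\bs$, i.e. $v\le\bar g$ in the lattice $M_\nu(Y)$. On the other hand, $\bar g$ itself lies in $\mathcal A_t$: the distribution function $s\mapsto\lambda_\bs(s)$ is left-continuous (because $\{F(\cdot,\bs)\ge s_0\}=\bigcap_{s<s_0}\{F(\cdot,\bs)\ge s\}$ and these sets have finite $\mu$-measure), so choosing $s_n\uparrow\bar g(\bs)$ with $\lambda_\bs(s_n)\ge t$ gives $\lambda_X^\star(\bar g(\bs))=\lambda_\bs(\bar g(\bs))=\lim_n\lambda_\bs(s_n)\ge t$; together with $\bar g\in M_\nu^+(Y)$ --- which holds provided $t<\mu(\{\e:\ F(\e,\bs)>0\})$ for $\nu$-a.e. $\bs$, as is the case for the small values $t=\psi_\mu(r)$ occurring in our applications --- this says $\bar g\in\mathcal A_t$, so in particular $\mathcal A_t\neq\emptyset$. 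Since $\mathcal A_t$ is a nonempty subset of $M_\nu(Y)$ bounded above by $\bar g$, and $M_\nu(Y)$ is super Dedekind complete ($\nu$ being finite), $\bigvee(\mathcal A_t)$ exists; being an upper bound of $\mathcal A_t$ that itself belongs to $\mathcal A_t$, $\bar g$ is the least upper bound, so $\bigvee(\mathcal A_t)$ is the class of $\bar g$. This gives \eqref{dfpartrear} and claim (i); and since the supremum is attained at $\bar g\in\mathcal A_t$, the representative $F^\star(\cdot,\bs)(t;\,X;\,\mu)=\bar g(\bs)=\big(F(\cdot,\bs)\big)^\star(t;\,X;\,\mu)$ is exactly the nonincreasing rearrangement of $F(\e,\bs)$ in $\e$, which is claim (ii).

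The main obstacle is precisely this identification: \emph{a priori} $\bigvee(\mathcal A_t)$ is only an abstract order-theoretic object, defined by a universal property and whose existence rests on Dedekind completeness of the Riesz space $M_\nu(Y)$, and one must show it is represented by the concrete, pointwise-defined function $\bar g$. This reduces to showing that the supremum is \emph{attained}, for which left-continuity of the layer-cake distribution function $s\mapsto\mu(\{\e:\ F(\e,\bs)\ge s\})$ is the key ingredient; the measurability and integrability of the candidate $\bar g$, needed so that it is a bona fide element of $M_\nu(Y)$ (respectively of $L_1(Y,\nu)$), are the remaining, routine, points, dispatched by Fubini and Chebyshev.
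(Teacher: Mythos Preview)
Your proof is correct and takes a genuinely different, more constructive route than the paper. The paper does not build the candidate $\bar g$ at all; instead it argues abstractly: first it observes (via Fubini) that each $\lambda_X^\star(v(\cdot))$ lies in $M_\nu(Y)$, then uses Chebyshev exactly as you do to bound $\mathcal A_t$ above by $t^{-1}\int_X F(\e,\cdot)\,\mu(\mathrm d\e)\in L_1(Y,\nu)$, and finally invokes a general lemma (Lemma~\ref{lmpontwisesupinf}) stating that in $M_\nu(Y)$ with $\nu$ finite the lattice supremum of any order-bounded set exists and coincides $\nu$-a.e.\ with the pointwise supremum. That lemma is proved using super Dedekind completeness and, crucially, \emph{order separability}: one extracts a countable subfamily of $\mathcal A_t$ with the same lattice supremum, and then the pointwise supremum of that countable family is measurable and equals the lattice supremum.

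Your argument bypasses this machinery entirely: you show directly that the pointwise rearrangement $\bar g$ is measurable, integrable, an upper bound for $\mathcal A_t$, and (via left-continuity of $s\mapsto\lambda_\bs(s)$) itself a member of $\mathcal A_t$ --- so it is the \emph{maximum}, not merely the supremum, and Dedekind completeness is not even needed. This is cleaner and more informative (the supremum is attained), and it makes the identification of $\bigvee(\mathcal A_t)$ with the classical rearrangement explicit, whereas the paper simply asserts that identification under the ``i.e.'' in claim~(ii). The paper's approach, on the other hand, isolates a reusable order-theoretic fact (Lemma~\ref{lmpontwisesupinf}), which it also applies elsewhere (in the proof of Theorem~\ref{thdoublrear}, to push an infimum inside an integral). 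Your honest caveat about $\bar g\in M_\nu^+(Y)$ is well taken; the paper is silent on the non-emptiness of $\mathcal A_t$, so neither proof addresses this point in full generality.
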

\begin{proof}
(i) It is easy to see that  $h_{v(\cdot)}\circ F(\cdot,\cdot)\in L_1(X\times Y,\,\mu\times\nu)$, hence by \eqref{dflambdXstar} and Fubini's Theorem,  the function $\lambda_X^\star(v(\bs))$ belongs to $M_\nu(Y)$. 
Using \eqref{dfAt} and the Chebyshev inequality, we get that the inclusion $v\in\mathcal{A}_t$ implies:
\begin{equation}\label{Chebysh}
v(\bs)\le\frac{1}{t}\int_XF(\e,\bs)\mu(\mathrm{d}\e). 
\end{equation}
 Since the right hand side of the last inequality belongs to $M_\nu(Y)$, the set $\mathcal{A}_t$, defined by \eqref{dfAt}, is bounded from above in $M_\nu(Y)$.  Claim (i) of Lemma \ref{lmpontwisesupinf} implies that there exists in $M_\nu(Y)$ the function defined by \eqref{dfpartrear}.  Furthermore, since $F\in L_1(X\times Y,\,\mu\times\nu)$, the function in the right hand side of \eqref{Chebysh} belongs to $L_1(Y,\,\nu)$. These circumstances imply that $F^\star(\cdot,\bs)(t;\, X;\,\mu)\in L_1(Y,\,\nu)$. Claim (i) is proven.
 
 Claim (ii)	 follows immediately from claim (ii) of Lemma \ref{lmpontwisesupinf}.
\end{proof}
\begin{lemma}\label{lmpontwisesupinf}
	Suppose that the measure $\nu$ is finite.
	If a set $\mathcal{A}\subseteq M_\nu(Y)$ is bounded from above (from below), then
	
	(i) there exists in $M_\nu(Y)$ the lattice supremum $\bigvee(\mathcal{A})$ (the lattice infimum $\bigwedge(\mathcal{A})$) ;
	
	(ii) $\bigvee(\mathcal{A})$ ($\bigwedge(\mathcal{A})$) coincides $\nu$-almost everywhere in $Y$ with the pointwise supremum (infimum) of $\mathcal{A}$.
	
\end{lemma}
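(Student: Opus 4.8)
The plan is to deduce everything from the two structural facts about $M_\nu(Y)$ recalled above, both of which hold precisely because $\nu$ is finite: $M_\nu(Y)$ is Dedekind complete and order separable (super Dedekind complete). Given this, claim (i) for a set $\mathcal{A}$ bounded from above is immediate, since Dedekind completeness is exactly the assertion that $\bigvee(\mathcal{A})$ exists in $M_\nu(Y)$. The ``bounded from below'' case I would reduce to this one by passing to $-\mathcal{A}=\{-v:\,v\in\mathcal{A}\}$, which is bounded from above; using that $v\mapsto -v$ reverses the order one gets $\bigwedge(\mathcal{A})=-\bigvee(-\mathcal{A})$, and since the pointwise infimum of $\mathcal{A}$ is minus the pointwise supremum of $-\mathcal{A}$, claim (ii) for infima follows from claim (ii) for suprema as well. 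Hence from here on I would work only with $\mathcal{A}$ bounded from above and $g:=\bigvee(\mathcal{A})$.

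The core step is to replace $\mathcal{A}$ by a countable cofinal subfamily. By order separability there is an at most countable $\{v_n\}_{n\ge1}\subseteq\mathcal{A}$ with $\bigvee_n v_n=g$. Fixing a representative of each $v_n$ and of $g$, set $g_n:=v_1\vee\cdots\vee v_n$ — an increasing sequence of honest measurable functions — and $\tilde g(\bs):=\sup_n g_n(\bs)=\lim_n g_n(\bs)$, which is measurable. Because $v_n\le g$ for every $n$, also $g_n\le g$ $\nu$-a.e.; deleting the countable union of the corresponding null sets and the null set $\{g=\infty\}$, one gets $\tilde g\le g$ $\nu$-a.e., so $\tilde g$ is finite $\nu$-a.e.\ and defines an element of $M_\nu(Y)$. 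Conversely $\tilde g\ge g_n\ge v_n$ pointwise, so $\tilde g$ is an upper bound of $\{v_n\}$ in $M_\nu(Y)$, whence $\tilde g\ge\bigvee_n v_n=g$. Thus $\tilde g=g$ $\nu$-a.e.: the abstract lattice supremum $g$ admits a representative which is literally a countable pointwise supremum.

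It then only remains to recognize $\tilde g$ as the pointwise supremum of all of $\mathcal{A}$ a.e.: every $v\in\mathcal{A}$ satisfies $v\le g=\tilde g$ $\nu$-a.e., while $\tilde g(\bs)=\sup_n v_n(\bs)\le\sup\{v(\bs):\,v\in\mathcal{A}\}$ pointwise; combining the two gives claim (ii). I expect the one genuine obstacle to be exactly this reduction from a possibly uncountable family to the countable sequence $\{g_n\}$ — without it the pointwise supremum of $\mathcal{A}$ need not be measurable at all — and it is precisely here that the finiteness of $\nu$ (hence super Dedekind completeness of $M_\nu(Y)$) is used; the rest is routine bookkeeping with null sets. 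Finally this lemma feeds into Proposition \ref{prexistpartrear}(ii) with $\mathcal{A}=\mathcal{A}_t$: by \eqref{dflambdXstar}--\eqref{dfAt} together with the left-continuity in $s$ of the map $s\mapsto\mu(\{\e\in X:\,F(\e,\bs)\ge s\})$, the pointwise supremum $\sup\{v(\bs):\,v\in\mathcal{A}_t\}$ is readily identified with the nonincreasing rearrangement of $F(\cdot,\bs)$, which yields claim (ii) of that proposition.
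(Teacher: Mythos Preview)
Your proposal is correct and follows essentially the same route as the paper: claim (i) from Dedekind completeness, and claim (ii) by invoking order separability to pass to an at most countable subfamily $\{v_n\}\subseteq\mathcal{A}$ with the same lattice supremum, checking that its pointwise supremum is measurable, bounded above by $g$ (countable union of null sets) and is itself an upper bound of $\{v_n\}$ in $M_\nu(Y)$, hence equals $g$ $\nu$-a.e. Your explicit reduction of the infimum case via $\mathcal{A}\mapsto-\mathcal{A}$ and the auxiliary monotone sequence $g_n$ are minor presentational additions; the final paragraph on Proposition~\ref{prexistpartrear}(ii) is a useful remark but goes beyond what the lemma itself requires.
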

\begin{proof} Claim (i) follows from the Dedekind completeness of $M_\nu(Y)$.
	
 (ii) It is enough consider the case where $\mathcal{A}$ is bounded from above.  Denote $\bar a=\bigvee\Big(\mathcal{A}\Big)$. Since $v\le \bar a$ for any $v\in\mathcal{A}$, then $\bar v(\bs)=\sup_{v\in\mathcal{A}}\le\bar a(\bs)$ for $\nu$-almost all $\bs\in Y$. Let us prove the inverse inequality. Since $M_\nu(Y)$ is order separable, then there is an at most countable subset $\mathcal{A}_{\sigma,t}\subseteq\mathcal{A}$ such that $\bar a_\sigma=\bigvee\Big(\mathcal{A}_{\sigma}\Big)=\bar a$. Denote by $\bar v_\sigma(\bs)$ the pointwise supremum of $\mathcal{A}_{\sigma}$. Then $\bar a(\bs)\ge\bar v(\bs)\ge\bar v_\sigma(\bs)$ for $\nu$-almost all $\bs\in Y$. Hence the function $\bar v_\sigma(\bs)$ is $\nu$-almost everywhere finite. On the other hand, it is known that the pointwise supremum of a countable set of $\nu$-measurable functions is $\nu$-measurable (\cite{Hal}, Sec. 20).   Therefore $\bar v_\sigma\in M_\nu(Y)$. Furthermore, it is clear that $\bar v_\sigma$ is an upper bound for $\mathcal{A}_{\sigma}$ in $M_\nu(Y)$, hence 
$\bar v_\sigma\ge\bar a_\sigma=\bar a$. Therefore $\bar v(\bs)\ge\bar a(\bs)$ for $\nu$-almost all $\bs\in Y$. Thus, we have shown that $\bar v(\bs)=\bar a(\bs)$ for $\nu$-almost all $\bs\in Y$.	
\end{proof}

	\begin{definition}\label{dfreprearr}
	We call the function $F^\star(\cdot,\bs)(t;\, X;\mu)$, defined by \eqref{dfpartrear}, the {\it partial  nonincreasing rearrangement} of $F(\e,\bs)$ by the variable $\e$.
	Then by claim (i) of Proposition \ref{prexistpartrear}, for $u>0$ the non-increasing rearrangement
	\begin{equation*}
	(F^\star(\cdot,\cdot)(t;\,X;\,\mu))^\star(u;\,Y;\,\nu)
	\end{equation*}  
	 of the previous function by the variable $\bs$ has a sense. We call it the {\it repeated non-increasing rearrangement} of $F(\e,\bs)$ and denote it briefly by
	 \begin{equation*}
	 (F^\star)^\star(t,u;\,X,Y;\,\mu,\nu).
	 \end{equation*}
	
\end{definition}

\end{document}